\let\oldtocsection=\tocsection
\renewcommand{\tocsection}[2]{\hspace{0em}\oldtocsection{#1}{#2}}
\let\oldmarginpar\marginpar
\renewcommand\marginpar[1]{\-\oldmarginpar[\raggedleft\footnotesize #1]%
{\raggedright\tiny #1}}
\begin{document}
\setlength{\unitlength}{2.5cm}

%%%%%%%%%%% theorem styles
\newtheorem{thm}{Theorem}[section]
\newtheorem{lm}[thm]{Lemma}
\newtheorem{prop}[thm]{Proposition}
\newtheorem{cor}[thm]{Corollary}

\theoremstyle{definition}
\newtheorem{dfn}[thm]{Definition}
\newtheorem{eg}[thm]{Example}
\newtheorem{rmk}[thm]{Remark}

\newcommand{\TA}{\text{A}}
\newcommand{\TB}{\text{B}}
\newcommand{\TC}{\text{C}}
\newcommand{\TD}{\text{D}}
\newcommand{\TE}{\text{E}}
\newcommand{\TF}{\text{F}}
\newcommand{\TG}{\text{G}}
\newcommand{\ii}{\mathbf{i}}

\newcommand{\F}{\mathbf{F}}
\newcommand{\N}{\mathbf{N}}
\newcommand{\R}{\mathbf{R}}
\newcommand{\C}{\mathbf{C}}
\newcommand{\Z}{\mathbf{Z}}
\newcommand{\Q}{\mathbf{Q}}
\newcommand{\T}{\mathbf{T}}
\newcommand{\K}{\mathbf{K}}
\newcommand{\LL}{\mathcal{L}}

\newcommand{\G}{\text{G}}
\newcommand{\re}{\text{Re}}
\newcommand{\im}{\text{Im}}
\newcommand{\gal}{\text{Gal}}
\newcommand{\ke}{\text{Ker}}
\newcommand{\ma}{\text{Max}}
\newcommand{\Spec}{\text{Spec}}
\newcommand{\Pic}{\text{Pic}}
\newcommand{\ord}{\text{ord}}
\newcommand{\app}{\thickapprox}
\newcommand{\deh}{H_\mca{D}}
\newcommand{\moh}{H_\mca{M}}
\newcommand{\ab}{\text{ab}}
\newcommand{\Mp}{\text{Mp}}
\newcommand{\Sp}{\text{Sp}}
\newcommand{\GL}{\text{GL}}
\newcommand{\PGL}{\text{PGL}}
\newcommand{\SL}{\text{SL}}
\newcommand{\Spin}{\text{Spin}}
\newcommand{\Ind}{\text{Ind}}
\newcommand{\Res}{\text{Res}}
\newcommand{\vs}{\vec{s}}
\newcommand{\Hom}{\text{Hom}}
\newcommand{\msc}[1]{\mathscr{#1}}
\newcommand{\mfr}[1]{\mathfrak{#1}}
\newcommand{\mca}[1]{\mathcal{#1}}
\newcommand{\mbf}[1]{{\bf #1}}
\newcommand{\wchi}{\wt{\chi}}
\newcommand{\GGMA}{\pmb{\Gamma}}

\newcommand{\dual}[1]{{}^L\wt{#1}}
\newcommand{\cd}[1]{{\wt{#1}}^\vee}
\newcommand{\noa}{\mathbf{n}_o^\alpha}
\newcommand{\koa}{\mathbf{k}_o^\alpha}
\newcommand{\Hs}{\textsf{Hs}}
\newcommand{\Inc}{\textsf{Inc}}
\newcommand{\CExt}{\textsf{CExt}}
\newcommand{\Bis}{\textsf{Bis}}
\newcommand{\Rec}{\text{Rec}}
\newcommand{\s}{\mathbf{s}}
\newcommand{\cc}{\mathbf{c}}
\newcommand{\g}{\mathbf{g}_{\psi^{-1}}}
\newcommand{\w}{\mathbbm{w}}
\newcommand{\Ftn}{{\sf Ftn}}
\newcommand{\A}{\mbf{A}_F}
\newcommand{\p}{\mathbf{p}}
\newcommand{\q}{\mathbf{q}}
\newcommand{\WD}{\text{WD}}
\newcommand{\W}{\text{W}}
\newcommand{\Wh}{{\sf Wh}_\psi}
\newcommand{\ggma}{\pmb{\gamma}}
\newcommand{\sct}{\text{sc}}
\newcommand{\OF}{\mca{O}^\digamma}
\newcommand{\vep}{\pmb{\varepsilon}}

\newcommand{\cu}[1]{\textsc{\underline{#1}}}
\newcommand{\set}[1]{\left\{#1\right\}}
\newcommand{\ul}[1]{\underline{#1}}
\newcommand{\wt}[1]{\overline{#1}}
\newcommand{\angb}[2]{\left\langle #1, #2 \right\rangle}
\newcommand{\seq}[3]{\xymatrix{
#1 \ar@{^(->}[r] & #2 \ar@{>>}[r] & #3
}}
\newcommand{\wm}[1]{\wt{\mbf{#1}}}
\newcommand{\elt}[1]{\pmb{\big[} #1\pmb{\big]} }
\newcommand{\ceil}[1]{\left\lceil #1 \right\rceil}
\newcommand{\val}[1]{\left| #1 \right|}

\title[Distinguished theta representations for certain covering groups]{\textsc{Distinguished theta representations for certain covering groups}}
\author{Fan Gao}
\address{Department of Mathematics, Purdue University, 150 N. University Street, West Lafayette, IN 47907}
\email{gaofan.math@gmail.com}
%\date{}
\subjclass[2010]{Primary 11F70; Secondary 22E50}
\keywords{Brylinski-Deligne covering groups, theta representations, Whittaker functionals, distinguished characters, dual groups}
\maketitle
\centerline{\emph{To Professor Freydoon Shahidi on his 70th birthday}}

\begin{abstract}
For Brylinski-Deligne covering groups of an arbitrary split reductive group, we consider theta representations attached to certain exceptional genuine characters. The goal of the paper is to study the dimension of the space of Whittaker functionals of a theta representation. In particular, we investigate when the dimension is exactly one, in which case the theta representation is called distinguished. For this purpose, we first give effective lower and upper bounds for the dimension of Whittaker functionals for general theta representations. As a consequence, the dimension in many cases can be reduced to simple combinatorial computations, e.g., the Kazhdan-Patterson covering groups of the general linear groups, or covering groups whose complex dual groups (\`a la Finkelberg-Lysenko-McNamara-Reich) are of adjoint type. In the second part of the paper, we consider coverings of certain semisimple simply-connected groups and give necessary and sufficient condition for the theta representation to be distinguished. There are subtleties arising from the relation between the rank and the degree of the covering group. However, in each case we will determine the exceptional character such that its associated theta representation is distinguished.
\end{abstract}

%\tableofcontents

\section{Introduction and main results}
\subsection{Introduction}
Let $F$ be a non-archimedean local field of characteristic 0 and residue characteristic $p$. Let $\mbf{G}$ be a connected split reductive group over $F$, and let $G:=\mbf{G}(F)$ be its rational points. One of the central ingredients in the study of irreducible admissible representation of $G$ is the uniqueness of Whittaker functionals  (cf. \cite{Rod, Shal}). For instance, this uniqueness property is crucial in the Langlands-Shahidi theory of $L$-functions (\cite{Sha}) for the so-called generic representations of $G$, i.e., those with nontrivial Whittaker functionals.

For a natural number $n\ge 1$, we assume that $F^\times$ contains the full subgroup of the $n$-th roots of unity, which is then denoted by $\mu_n$. In this paper, we work with the Brylinski-Deligne $n$-fold covering groups $\wt{G}^{(n)}$ of $G$, see \S \ref{Sec:SF} for description on such covering groups. We may write $\wt{G}^{(n)}$ and $\wt{G}$ interchangeably if no confusion arises. For simplicity, the phrase \emph{covering groups} in this paper is used to refer to the Brylinski-Deligne covering groups.  For this purpose, it is noteworthy to mention that the Brylinski-Deligne framework is quite encompassing and contains almost all classically interesting covering groups (\cite{S}, \cite{Mo} and \cite{Ma}), in particular the Matsumoto covering groups of semisimple simply-connected groups in \cite{Mo} and the Kazhdan-Patterson covering groups $\wt{\GL}_r^{(n)}$ of $\GL_r$ in \cite{KP}. 

For covering groups, the uniqueness of Whittaker functionals for genuine representations of $\wt{G}^{(n)}$ holds rarely and one nontrivial example is the classical double cover $\wt{\Sp}_{2r}^{(2)}$ of the symplectic group $\Sp_{2r}$, see \cite{Szp2}. This uniqueness plays pivotal role in the work of Szpruch (cf. \cite{Szp1}-\cite{Szp4}) generalizing the method of Langlands-Shahidi to $\wt{\Sp}_{2r}^{(2)}$. Besides this special family of examples, the uniqueness of Whittaker functionals fails widely, and one almost never expects such uniform property  for all genuine representations of a general covering group. For example, it is well-known that certain theta representations for the Kazhdan-Patterson coverings $\wt{\GL}_r^{(n)}$ of $\GL_r$ could have high dimensional space of Whittaker functionals (cf. \cite{KP}). In fact, such theta representations show that the analogous standard module conjecture (which is a theorem for linear algebraic groups by \cite{CS}) does not hold for covering groups. 

The failure of the uniquness of Whittaker functionals for general genuine representations  of covering groups, however, has been the source of both obstacles and inspirations to some advancement of the representation theory of such groups. On the one hand, for instance, it is not a priori clear how to generalize the Langlands-Shahidi theory of $L$-functions to covering groups because of the non-uniqueness of Whittaker functionals for unramified principal series representations. Equivalently, it is essentially due to the fact that the analogous Casselman-Shalika formula for covering groups as in \cite{CO} and \cite{Mc2} is vector-valued, whereas for linear algebraic groups it is scalar-valued (cf. \cite{CS1}).

On the other hand, there are various streams of rich theories stemming from the non-existence or multi-dimensionality of Whittaker functionals. For instance, for genuine representations of covering groups without Whittaker functionals, one may consider semi-Whittaker functionals as in \cite{Tak} or degenerate Whittaker-functionals \cite{MW}, which interact fruitfully with the arithmetic and character theory of the representations.  Meanwhile, the theory of unipotent orbit as discussed in \cite{Gin} and \cite{FG1}-\cite{FG3} for instance  also rectify the situation in the absence of Whittaker functionals. In the latter case where multi-dimensionality holds, the theory of multiple Weyl Dirichlet series makes deep and fascinating connections between representation theory of covering groups, quantum physics and statistical mechanics etc, see \cite{BBF}, \cite{BFH} and \cite{BFG} for some of the ideas involved. In particular, the book \cite{BFG} contains several excellent expository articles on multiple Dirichlet series.

Nevertheless, in this paper we consider only the so-called theta representations $\Theta(\wt{G}^{(n)}, \wchi)$ which appear as the local representations for the residue of the Borel Eisenstein series (see Definition \ref{D:excep}). Moreover, we are mostly interested in determining when the space of Whittaker functionals  for $\Theta(\wt{G}^{(n)}, \wchi)$ has dimension one, in which case $\Theta(\wt{G}^{(n)}, \wchi)$ is called distinguished following Suzuki in \cite{Suz1}. Here $\wchi$ is an exceptional genuine character (see Definition \ref{D:excep}) of the center $Z(\wt{T})$ of the covering torus $\wt{T}\subseteq \wt{G}$.  The reason for considering this problem is two-fold.

First, $\Theta(\wt{G}^{(n)}, \wchi)$ is in certain sense the simplest family of genuine representations of a general covering group $\wt{G}^{(n)}$. Indeed, if $n=1$, then it follows from definition that $\Theta(\wt{G}^{(n)}, \wchi)$ could be the trivial representation of the linear group $\wt{G}=\wt{G}^{(1)}$, depending on a proper choice of the exceptional character $\wchi$. Therefore, for the genericity question regarding Whittaker functionals of genuine representations, it is reasonable to consider this family first. Moreover, theta representations for the Kazhdan-Patterson covering groups of $\GL_r$, to which we have just alluded, are already studied in depth in the seminal paper \cite{KP}. Despite the fact that the idea in \cite{KP} could be applicable for general covering groups, to the best of our knowledge, it seems that there is no systematic treatment on theta representations for general covering groups in literature. Perhaps this gap is caused by the tedious cocycle computation to be carried out by any potential author. However, the Brylinski-Deligne framework enables us to compute by invoking some neat structural fact of the covering groups of interest, and to handle only a minimized usage of cocycle on the torus. In brief, we wish to fill in the gap by generalizing the relevant work of Kazhdan-Patterson to Brylinski-Deligne covering groups.

Second, distinguished theta representations have important and emergingly wider applications. Theta representations are the representation-theoretic analogues of theta functions, one of the early applications of which was given by Riemann in his seminal paper to prove the functional equation of the Riemann zeta function.  In the language of modern theory of representations, theta representations for $\wt{\Sp}_{2r}^{(2)}$ gain deep applications in the Shimura correspondence (cf. \cite{Shi} \cite{Gel}). On the other hand, following the work of Kazhdan-Patterson, theta representations for $\wt{\GL}_r^{(n)}$ are also studied extensively in the work Bump-Hoffstein (cf. \cite{BH}) and Suzuki (\cite{Suz1}, \cite{Suz2}), to mention a few. In particular, these authors made some deep conjectures and also provided evidence for a generalized Shimura correspondence regarding $\wt{\GL}_r^{(n)}$, and the distinguishedness property is exploited to achieve the goals in their work.
Another significant direction of applications is the Rankin-Selberg integral representation for the symmetric square and cube $L$-functions (cf. \cite{BG}, \cite{BGH}, \cite{Tak} and \cite{Ka2}). Evidently, it should be mentioned that for distinguished theta representations, the theory of $L$-function could be developed as in the linear algebraic case, since the Casselman-Shalika formula is then scalar-valued. More recently, the work of E. Kaplan \cite{Ka1}-\cite{Ka3}, S. Friedberg and D. Ginzburg \cite{FG1}-\cite{FG3} also relies heavily on the local and global theta representations in their consideration of Fourier coefficient, Rankin-Selberg $L$-function and descent integral etc. Notably in their work, distinguishedness is responsible for proving that a global integral admits an Euler factoriazation into local factors. Besides these, the problem on global cuspidal theta representations is important and many problems are open (cf. \cite{FG1}, \cite{Suz1}). In any case, we believe that distinguished theta representations are objects of great interest and significance, and we hope that our paper could shed some light on the relevant questions.

\subsection{Main results}
We consider a Brylinski-Deligne $n$-fold covering group $\wt{G}^{(n)}$. Let $\wchi$ be an exceptional character for $\wt{G}^{(n)}$. Fix an unramified additive character $\psi$ of $F$ and consider the space $\Wh(\Theta(\wt{G}^{(n)}, {\wchi}))$ of $\psi$-Whittaker functionals of the theta representation $\Theta(\wt{G}^{(n)}, {\wchi})$. The pair $(\wt{G}^{(n)}, \wchi)$ such that $\dim \Wh(\Theta(\wt{G}^{(n)}, {\wchi}))=1$ is quite unique, and the goal is to investigate when $\Theta(\wt{G}^{(n)}, \wchi)$ is distinguished. We remark that for fixed $\wt{G}^{(n)}$, the set of  unramified exceptional characters $\wt{\chi}$ is a torsor over $Z(\wt{G}^\vee)$, the center of the complex dual group $\wt{G}^\vee$ of $\wt{G}$. For details on $\wt{G}^\vee$, we refer to \cite{FL}, \cite{Mc1}, \cite{Re} and \cite{We2}.
\vskip 5pt

We outline the structure of the paper and state the main results.

In \S 2, we recall the basic structural  facts on a Byrlinski-Deligne covering group $\wt{G}^{(n)}$ which will be crucial for our computations. In this paper, we consider exclusively unramified covering group $\wt{G}^{(n)}$ and unramified exceptional character $\wchi$. In \S 3, the space $\Wh(\Theta(\wt{G}^{(n)}), \wchi)$ is analyzed following the strategy in \cite{KP} closely. In particular, it relies crucially on the Shahidi local coefficient matrix $\left[\tau(\wchi, \w_\alpha, \gamma, \gamma')\right]_{\gamma, \gamma'}$ for covering groups. Note that $\left[\tau(\wchi, \w_\alpha, \gamma, \gamma')\right]_{\gamma, \gamma'}$ is also referred to as the scattering matrix in \cite{BBB} and transition matrix in \cite{CO}. Since the matrix is an analogue (and in fact the reciprocal) of Shahidi's local coefficient in the linear algebraic case (cf. \cite[Chapter 5]{Sha}), we call it the Shahidi local coefficient matrix in this paper. See also \cite{Bud} and \cite{Szp5}. In the unramified setting, the matrix is computed in \cite{Mc2}; it is also computed for ramified places  in \cite{GS}. 

The first main result is Theorem \ref{T:LUB} from \S 3:

\begin{thm} \label{T:MT1}
Let $\wt{G}^{(n)}$ be an arbitrary unramified Brylinski-Deligne covering group. Let  $\wchi$ be an unramified exceptional genuine character of $\wt{G}^{(n)}$ with associated theta representation $\Theta(\wt{G}^{(n)}, \wchi)$. Then,
$$\val{\wp_{Q,n}(\OF_{Q,n})} \le \dim \Wh(\Theta(\wt{G}^{(n)}, \wchi))  \le \val{\wp_{Q,n}(\OF_{Q,n, \sct})}.$$
\end{thm}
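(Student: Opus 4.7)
The plan is to realize $\Theta(\wt{G}^{(n)}, \wchi)$ as an irreducible subquotient of the unramified principal series $I(\wchi)$ and to translate the dimension question into a combinatorial analysis of the Shahidi local coefficient matrices attached to the simple reflections. Recall that $\Wh(I(\wchi))$ has a canonical basis indexed by a natural torsor over a finite quotient of $Y$ (the cocharacter lattice), and the Whittaker functionals of $\Theta(\wt{G}^{(n)}, \wchi)$ form a subspace of this space. The key input is McNamara's formula (\cite{Mc2}) for $[\tau(\wchi, \w_\alpha, \gamma, \gamma')]$, whose off-diagonal entries are Gauss sums along the $\alpha$-string; under the exceptional hypothesis on $\wchi$, many of these entries collapse and the matrix splits into blocks of size $n_\alpha = n/\gcd(n, Q(\alpha^\vee))$.

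For the lower bound, I would exhibit an explicit family of Whittaker functionals of $\Theta(\wt{G}^{(n)}, \wchi)$ indexed by $\wp_{Q,n}(\OF_{Q,n})$: starting with a basis element $\gamma$ corresponding to a representative in $\OF_{Q,n}$, the exceptional condition on $\wchi$ forces it to lie in every $w_\alpha$-invariant subspace cut out by the Shahidi matrix, hence descend to the theta quotient. This step is the direct analogue of the Kazhdan-Patterson construction in \cite{KP}, but carried out intrinsically via the Brylinski-Deligne torus structure of \S 2 so as to minimize cocycle bookkeeping; the verification amounts to checking that the off-diagonal Gauss-sum entries vanish on these distinguished coset representatives.

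For the upper bound, I would argue that every Whittaker functional on $\Theta(\wt{G}^{(n)}, \wchi)$, when expanded against the canonical basis, must have support contained in the image $\wp_{Q,n}(\OF_{Q,n,\sct})$. The mechanism is: for each simple $\alpha$, the residue construction of the theta representation forces the functional to lie in the appropriate eigenblock of the Shahidi matrix at $\w_\alpha$, and the joint constraint over all simple $\alpha$ confines the support to cosets representing elements divisible by the coroot lattice of the simply-connected cover, which is precisely $\OF_{Q,n,\sct}$. The main obstacle I anticipate is propagating the block decomposition from a single $\alpha$ to the simultaneous constraint over all simple roots, since a naive $\alpha$-by-$\alpha$ argument risks overcounting; the cleanest route is an induction on the length of Weyl elements, using compatibility of the Shahidi matrices under the cocycle relation $\tau(\wchi, w w', \cdot, \cdot) = \tau(\wchi, w, \cdot, \cdot)\, \tau(w\wchi, w', \cdot, \cdot)$, to show that no additional coset outside $\wp_{Q,n}(\OF_{Q,n,\sct})$ can support a functional that survives to $\Theta(\wt{G}^{(n)}, \wchi)$.
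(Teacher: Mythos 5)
Your high-level scaffolding matches the paper: both reduce $\dim\Wh(\Theta(\wt{G}^{(n)},\wchi))$ to counting $W$-orbits in $Y$ admitting a consistent nonzero solution $\cc$ to the constraint coming from Corollary~\ref{C:iff-2}. But both halves of your argument contain a gap at the place where the actual work happens.

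For the lower bound, your claim that ``the verification amounts to checking that the off-diagonal Gauss-sum entries vanish on these distinguished coset representatives'' does not describe what occurs. On a $Y_{Q,n}$-free orbit the two pieces $\tau^1,\tau^2$ of McNamara's formula are supported on different cosets, so for each simple $\alpha$ the constraint is a genuine two-term recursion $\cc(\s_{\w_\alpha[y]})=\mathbf{t}(\w_\alpha,y)\cdot\cc(\s_y)$ with a nonzero Gauss-sum factor. The content is not vanishing; it is \emph{consistency}: one must show that propagating this recursion along any reduced word for $\w$ gives the same answer. That is precisely Lemma~\ref{L:order2} ($\mathbf{t}(\w_\alpha,\w_\alpha[y])\mathbf{t}(\w_\alpha,y)=1$) together with the braid-relation computations of Proposition~\ref{P:adj-roots}, which justify the well-definedness of $\mathbf{T}(\w,y)$ in Definition~\ref{D:T}. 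Without this, your proposed $\cc$ is not even defined, let alone a Whittaker functional.

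For the upper bound, you correctly flag that one cannot argue simple root by simple root, but the remedy you propose — induction on Weyl length via the cocycle relation $\tau(\wchi,\w_2\w_1,\cdot,\cdot)=\sum\tau(\cdot)\tau(\cdot)$ — does not produce the needed conclusion. The functional constraint in Proposition~\ref{P:inter} is a kernel condition for \emph{simple} reflections only, so length induction only tells you how $\cc$ transforms along the orbit, not that $\cc$ must vanish when the orbit is not $Y_{Q,n}^{\sct}$-free. The missing ingredient is the geometric lemma on the affine Weyl group action: if $\w(v)\equiv v\mod\text{L}$ for some $\w\in W$, then after conjugating by some $\w'\in W$ there is a \emph{simple} $\alpha$ with $\w_\alpha(\w'(v))\equiv\w'(v)\mod\text{L}$ (this is the alcove/wall argument in the paper). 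That reduction brings you back to the one-simple-root case, where the direct computation yields $\cc(\s_y)=-q^{-1}\cc(\s_y)$ and hence $\cc(\s_y)=0$. Without the alcove lemma your induction has no base case that applies to the orbit in question, and the argument does not close.
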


These two bounds are combinatorial quantities involving certain Weyl-action on lattices. The readers are referred to \S 2 for details. We highlight here some consequences from the above Theorem. 

Firstly, Theorem \ref{T:MT1} above recovers the results of Kazhdan-Patterson.
More precisely, for covering groups $\wt{\GL}_r^{(n)}$ studied in \cite{KP}, the authors  determine that $\dim \Wh(\Theta(\wt{\GL}_r^{(n)}, \wchi))=1$ if and only if
\begin{enumerate}
\item[1)] $n=r$ and  $\wt{\GL}_r^{(n)}$ is any Kazhdan-Patterson covering group, or 
\item[2)] $n=r+1$ and $\wt{\GL}_r^{(n)}$ belongs to a special type of degree $n$ Kazhdan-Patterson covering groups. 
\end{enumerate}
In fact, for any covering group $\wt{\GL}_r^{(n)}$ studied in \cite{KP}, one has $\OF_{Q,n}=\OF_{Q,n,\sct}$. Therefore $\dim \Wh(\Theta(\wt{\GL}_r^{(n)}, \wchi)) = \val{\wp_{Q,n}(\OF_{Q,n, \sct})}$. In particular, the dimension does not depend on the choice of the exceptional character $\wchi$ and can be computed effectively. For details, see Example \ref{E:KP}. 

In general, for cases where the two bounds in Theorem \ref{T:MT1} actually agree, the computation of the dimension is reduced to purely combinatorial problem, and thus amenable to a straightforward calculation. This includes the case where $Y_{Q,n}=Y_{Q,n}^{\sct}$, or equivalently $Z(\wt{G}^\vee)=1$. For example, odd degree coverings of simply-connected groups of type $\TB_r, \TC_r$ have this property. See \S 5, \S 6.

Secondly in contrast, when the two bounds in Theorem \ref{T:MT1} do not agree, $\dim \Wh(\Theta(\wt{G}, \wchi))$ becomes sensitive to the choice of the exceptional character $\wchi$. The second half of this paper is devoted to investigate this.  This phenomenon already occurs for the degree two metaplectic covering $\wt{\SL}_2^{(2)}$, see Example \ref{E:SL2}. In this case $\Theta(\wt{\SL}_2^{(2)}, \wchi)$ is the even Weil representation. Consider $\Theta(\wt{\SL}_2^{(2)}, \wchi_{\psi_a})$, where $\wchi_{\psi_a}$ is an exceptional character defined by using the twisted additive character $\psi_a, a\in F^\times$. It is well-known that $\dim \Wh(\Theta(\wt{\SL}_2^{(2)}, \wchi_{\psi_a}))\le 1$ and the equality holds if and only if $a\in (F^\times)^2$. Our analysis shows that similar phenomenon occurs for higher rank groups, see \S \ref{S:A-delic}, in particular Corollary \ref{C:AC01}.
\vskip 5pt

In any case, we summarize our results for certain coverings of simply-connected groups as follows. We write for instance $\wt{\TA}_r^{(n)}$ for the degree $n$ covering of the simply-connected group of type $\TA_r$ of rank $r$. Here the covering group arises from a quadratic form $Q$ on the coroot lattice $Y=Y^{\sct}$ such that $Q(\alpha^\vee)=1$ for any short coroot $\alpha^\vee$. The following theorem is an amalgam of Theorem \ref{T:A}, Theorem \ref{T:C}, Theorem \ref{T:B} and  Theorem \ref{T:G2}. Only for $\wt{\TA}_r^{(n)}$, we impose the condition $n\le r+2$ for technical reasons.

\begin{thm} \label{T:MT2}
Let $\wt{G}^{(n)}$ be an unramified Brylinski-Deligne degree $n$ covering  of a simply-connected semisimple group of type $\TA_r, \TB_r, \TC_r$ or $\TG_2$. If $\wt{G}^{(n)}=\wt{\TA}_r^{(n)}$, we further assume $n\le r+2$. Let $\wt{\chi}$ be an unramified exceptional character for $\wt{G}^{(n)}$. In each case for $\wt{G}^{(n)}$ below, if $\dim \Wh(\Theta(\wt{G}^{(n)}, \wchi)) =1 $, then necessarily the following relation between $r$ and $n$ holds:
$$\begin{cases}
\wt{\TA}_r^{(n)}, r\ge 1, n\le r+2: & n=r+2 \text{ or } r+1; \\
\wt{\TC}_r^{(n)}, r\ge 2: & n=4r-2, 4r, 4r+2 \text{ or } 2r+1; \\
\wt{\TB}_r^{(n)}, r\ge 3: & n=2r+1 \text{ or } 2r+2; \\
%\wt{\TD}_r^{(n)}, r\ge 3: & n=2r-1 \text{ or } 2r-2;\\
\wt{\TG}_2^{(n)}:            & n=7 \text{ or } 12.
\end{cases}
$$
Conversely, suppose that $r$ and $n$ satisfy the above relations; then for every case above except $\wt{\TC}_r^{(4r)}$, there exists a unique exceptional character $\wchi$ such that $\dim \Wh(\Theta(\wt{G}^{(n)}, \wchi)) =1$ for above $\wt{G}^{(n)}$.
\end{thm}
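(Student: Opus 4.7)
The plan is to prove Theorem \ref{T:MT2} by treating the four types $\TA_r$, $\TC_r$, $\TB_r$, $\TG_2$ separately --- these individual statements being Theorems \ref{T:A}, \ref{T:C}, \ref{T:B} and \ref{T:G2} --- and then amalgamating the four results. The common engine is Theorem \ref{T:MT1}: its lower bound $\val{\wp_{Q,n}(\OF_{Q,n})}$ drives the necessary restriction on $(r,n)$, while the upper bound $\val{\wp_{Q,n}(\OF_{Q,n,\sct})}$, supplemented by a finer character-dependent analysis when the two bounds separate, yields sufficiency and uniqueness of $\wchi$.

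For each type I would first lay out the common data: because the group is simply connected, $Y$ equals the coroot lattice, and the quadratic form $Q$ is normalised so that $Q(\alpha^\vee)=1$ on short coroots (so $Q$ takes values in $\{1,2,3\}$ depending on root length). From this one computes the integers $n_\alpha$, the sublattices $Y_{Q,n}$ and $Y_{Q,n}^{\sct}$, the quotient $\wp_{Q,n}$ carrying the residual Weyl action, and the fixed-point subset $\OF_{Q,n}$. The list of admissible $(r,n)$ will emerge from the arithmetic of these lattices: for $\TA_r$ the analysis is parallel to Example \ref{E:KP} and isolates $n\in\{r+1, r+2\}$; for $\TC_r$ the long coroots with $Q(\alpha^\vee)=2$ inject a $4$-periodicity that produces $\{4r-2, 4r, 4r+2, 2r+1\}$; for $\TB_r$ the short-long roles swap and one obtains $\{2r+1, 2r+2\}$; and for $\TG_2$ one works directly with the twelve-element root system to extract $\{7, 12\}$. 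Outside these residues the lower bound is strictly greater than $1$, ruling out distinguishedness.

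For sufficiency and uniqueness of $\wchi$, whenever the two bounds in Theorem \ref{T:MT1} coincide distinguishedness is automatic for every exceptional $\wchi$, and uniqueness of the associated theta representation follows from the triviality of the relevant $Z(\wt{G}^\vee)$-action. When the bounds separate --- in particular in several of the $\wt{\TC}_r$ and $\wt{\TB}_r$ cases --- I would follow the template of Example \ref{E:SL2} and the adelic computation in \S \ref{S:A-delic}: the set of unramified exceptional characters forms a torsor over $Z(\wt{G}^\vee)$, and by examining the vanishing pattern of the Shahidi local coefficient matrix entries $\tau(\wchi, \w_\alpha, \gamma, \gamma')$ --- whose explicit values were computed in \cite{Mc2} --- one pins down the single $Z(\wt{G}^\vee)$-coset of characters for which the off-diagonal relations kill every non-fixed Weyl orbit, leaving $\dim\Wh=1$.

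The chief obstacle will be the exception $\wt{\TC}_r^{(4r)}$, where uniqueness of $\wchi$ fails: here the relevant piece of $Z(\wt{G}^\vee)$ acts on the exceptional characters in a way that preserves the minimal Whittaker dimension while producing genuinely inequivalent distinguished representations. Diagnosing precisely which torsor cosets realise the minimum, and showing that in this particular case no single coset is preferred while in every other listed case exactly one is, demands explicit manipulation of the local coefficient matrix on the full Weyl orbit structure. This case-by-case manipulation, carried out individually in Theorems \ref{T:A}, \ref{T:C}, \ref{T:B} and \ref{T:G2}, is the computationally delicate heart of the argument; once completed, Theorem \ref{T:MT2} follows by direct concatenation.
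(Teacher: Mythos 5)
Your proposal matches the paper's approach: it proves Theorem \ref{T:MT2} by amalgamating the four case-by-case Theorems \ref{T:A}, \ref{T:C}, \ref{T:B}, \ref{T:G2}, in each case computing the two orbit-counting bounds of Theorem \ref{T:MT1} to isolate the admissible $(r,n)$ and then, when the bounds separate, using the explicit $\mathbf{t}(\w_\alpha, y)$ coefficients (via the distinguished character $\wchi_{\psi'}^0$ and Weil index) to pin down the unique $\wchi$. One correction worth flagging: you call $\wt{\TC}_r^{(4r)}$ the ``chief obstacle'' requiring delicate local-coefficient-matrix work, but in the paper it is the \emph{easy} case --- there one checks $\OF_{Q,n}=\OF_{Q,n,\sct}$ so the bounds coincide at $1$ and \emph{both} exceptional characters are distinguished with no further computation; the genuinely delicate character-dependent analysis occurs in $\wt{\TC}_r^{(4r\pm 2)}$, $\wt{\TB}_r^{(2r+2)}$ and $\wt{\TA}_r^{(r+1)}$, where the bounds separate (e.g.\ $0\le\dim\le 1$ or $1\le\dim\le r+1$) and the single good $\wchi$ must be identified via the normalization conditions (\ref{E:A}), (\ref{E:C}), (\ref{E:B1})--(\ref{E:B2}).
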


We actually determine the unique exceptional character specified in Theorem \ref{T:MT2}, see Theorems \ref{T:A}, \ref{T:C}, \ref{T:B} and \ref{T:G2}. In the $\wt{\TA}_r^{(r+1)}$ case, our result generalizes that for the even Weil representation of $\wt{\SL}_2^{(2)}$ mentioned above. As noted, the collection of unramified exceptional characters is a torsor over $Z(\wt{G}^\vee)$. Moreover, for covering groups of simply connected groups, the choice of $\psi$ actually gives a base point for this torsor. Thus, any exceptional character $\wchi$ gives rise to an element in $Z(\wt{G}^\vee)$, depending on the choice of $\psi$. That is, the explicit requirement given in those theorems could be viewed as determining the corresponding element in $Z(\wt{G}^\vee)$.

We note that for classical groups and similitude groups, an extensive study is included in \cite{FGS}. Our result from Thereom \ref{T:MT2} also agrees with the pertinent discussion in \cite{FG2} for symplectic groups. For example, the local statement for the second part of Conjecture 1 in \cite{FG2} follows from Proposition \ref{P:Sp-odd} in our paper. Moreover, the factoriazability property of the Whittaker function in \cite{FG2} for $\wt{\Sp}_{2n}^{(4n-2)}$ also agrees with our result for the $\wt{\TC}_r^{(n)}$ case in Thereom \ref{T:MT2}.

Finally, we remark that groups of type $\TD_r, \TE_6, \TE_7, \TE_8, \TF_4$ could be analyzed by the same procedure. In principle, Theorem \ref{T:MT1} coupled with analogous argument for Theorem \ref{T:MT2} enable one to determine completely $\dim \Wh(\Theta(\wt{G}^{(n)}, \wchi))$ for arbitrary $(\wt{G}^{(n)}, \wchi)$.

\vskip 10pt

\textbf{Acknowledgment.} I would like to thank Solomon Friedberg, David Goldberg and Freydoon Shahidi for interesting discussions on various topics. Thanks are also due to Wee Teck Gan for helpful comments on the paper. Meanwhile, I would like to thank Boston College for its support and hospitality during a visit in the Fall semester. Finally, I would like to thank the referee for very helpful and detailed comments which greatly improve the exposition of the paper.

%%%%%%%%%%%%%%%%%%%%%%%%%%%%%%
\vskip 15pt

\section{Basic set-ups}
\subsection{Structural facts on $\wt{G}$} \label{Sec:SF}
For ease of reading, we first recall some structural facts on $\wt{G}$. The main references are \cite{BD}, \cite{FL}, \cite{Re}, \cite{Mc1}-\cite{Mc2}, \cite{We2} and \cite{GG}. In this paper, we concentrate exclusively on unramified Brylinski-Deligne covering group $\wt{G}$ (to be explained below). We follow notations in \cite{GG}.

%We will use the language of incarnator $(D, \eta)$ in \cite{GG}.
%We recall briefly some structure facts of Brylinski-Deligne covering group from cite[GG].
Let $F$ be a nonarchimedean field of characteristic 0, with residual characteristic $p$. Fix a uniformiser $\varpi$ of $F$. Let $\mbf{G}$ be a split linear algebraic group over $F$ with maximal split torus $\mbf{T}$. Write $(X, \Phi, \Delta, Y, \Phi^\vee, \Delta^\vee)$ for the root data of $\mbf{G}$. Here $X$ (resp. $Y$) is the character lattice (resp. cocharacter lattice) for $(\mbf{G}, \mbf{T})$. Choose a set $\Delta\subseteq \Phi$ of simple roots from the set of roots $\Phi$, and $\Delta^\vee$ the corresponding simple coroots from $\Phi^\vee$. Let $\mbf{B}$ be  the Borel subgroup associated with $\Delta$. Write $Y^{\sct}\subseteq Y$ for the lattice generated by $\Phi^\vee$. 

Fix a Chevalley system of pinnings for $(\mbf{G}, \mbf{T}, \mbf{B})$. That is, fix an isomorphism $e_\alpha: \mbf{G}_\text{a} \to \mbf{U}_\alpha$ for each $\alpha \in \Phi$, where $\mbf{U}_\alpha \subseteq \mbf{G}$ is the root subgroup  associated with $\alpha$. Moreover, for each $\alpha\in \Phi$, there is a unique morphism $\varphi_\alpha: \SL_2 \to \mbf{G}$ which restricts to $e_{\pm \alpha}$ on the upper and lower triangular subgroup of unipotent matrices of $\SL_2$.

Consider the algebro-geometric covering $\wm{G}$ of $\mbf{G}$ by $\mbf{K}_2$, which is categorically equivalent to the pairs $\set{(D, \eta)}$ (cf. \cite{GG}). Here $\eta: Y^{\sct} \to F^\times$ is a homomorphism. On the other hand, $D$ is a bisector associated to a Weyl-invariant quadratic form $Q: Y\to \Z$. That is, let $B_Q$ be the Weyl-invariant bilinear form associated to $Q$ such that $B_Q(y_1, y_2)=Q(y_1+y_2)-Q(y_1) -Q(y_2)$, then $D$ is a bilinear form on $Y$ satisfying
$$D(y_1, y_2) + D(y_2, y_1)=B_Q(y_1, y_2).$$
The bisector $D$ is not necessarily symmetric. 
%A bisector $D$ is said to be \emph{fair} if 
%\begin{enumerate}
%\item[$\bullet$] for any $\alpha^\vee \in \Delta^\vee$ such $2| Q(\alpha^\vee)$, one has $2|D(y, \alpha^\vee)$ and $2|D(\alpha^\vee, y)$ for all $y\in Y$.
%\end{enumerate}
Any $\wm{G}$ is, up to isomorphism, incarnated by (i.e. categorically associated to) $(D,\eta)$ for a bisector $D$ and some $\eta$. 
%Thus, there is no loss of generalities to impose the fairness condition, and it will be explicitly mentioned whenever we make such assumption on $D$.
 
Let $n\ge 1$ be a natural number. Assume that $F^\times$ contains the full group $\mu_n$ of $n$-th roots of unity and $p\nmid n$. Let $\wm{G}$ be incarnated by $(D, \eta)$. One obtains naturally degree $n$ topological covering groups $\wt{G}, \wt{T}, \wt{B}$ of the rational points $G:=\mbf{G}(F), T:=\mbf{T}(F), B:=\mbf{B}(F)$, e.g.,
%$$\begin{tikzcd}
%\mu_n \ar[r, hook] & \wt{G} \ar[r, two heads] & G.
%\end{tikzcd}$$
$$\xymatrix{
\mu_n \ar@{^(->}[r] & \wt{G} \ar@{>>}[r]  & G.
}$$
We may write $\wt{G}^{(n)}$ for $\wt{G}$ to emphasize the degree of covering. For any set $H\subseteq G$, we write $\wt{H}\subseteq \wt{G}$ for the preimage of $H$ with respect to the quotient map $\wt{G} \to G$. The Bruhat-Tits theory gives a maximal compact subgroup $K\subseteq G$, which depends on the fixed pinnings. We assume that $\wt{G}$ splits over $K$ and fix such a splitting; call $\wt{G}$ an unramified Brylinski-Deligne covering group in this case. We remark that if the derived group of $\mbf{G}$ is simply-connected, then $\wt{G}$ splits over $K$ (cf. \cite[Theorem 4.2]{GG}). On the other hand, we refer the reader to \cite[\S 4.6]{GG} for a counterexample from a certain double cover of $\text{PGL}_2$ where the splitting does not exist.

The data $(D, \eta)$ play the following role for the structural fact on $\wt{G}$.

\begin{itemize}
\item[$\bullet$] The group $\wt{G}$ splits canonically over any unipotent element of $G$. In particular, we write $\wt{e}_\alpha(u) \in \wt{G}, \alpha\in \Phi, u\in F$ for the canonical lifting of $e_\alpha(u) \in G$. For any $\alpha\in \Phi$, there is a natural representative $w_\alpha:= e_\alpha(1) e_{-\alpha}(-1) e_\alpha(1) \in K$ (and therefore $\wt{w}_\alpha\in \wt{G}$ by the splitting of $K$) of the Weyl element $\w_\alpha\in W$. Moreover, for $h_\alpha(a):=\alpha^\vee(a)\in G, \alpha\in \Phi, a\in F^\times$, there is a natural lifting $\wt{h}_\alpha(a) \in \wt{G}$ of $h_\alpha(a)$, which depends only on the pinning and the canonical unipotent splitting. For details, see \cite{GG}.
\item[$\bullet$] There is a section $\s$ of $\wt{T}$ over $T$ such that the group law on $\wt{T}$ is given by
\begin{equation} \label{F:s}
\s(y_1(a)) \cdot \s(y_2(b)) = (a, b)_n^{D(y_1, y_2)} \cdot \s(y_1(a)\cdot y_2(b)).
\end{equation}
Moreover, for the natural lifting $\wt{h}_\alpha(a)$ for $\alpha \in \Delta$, one has
\begin{equation} \label{F:h-s}
\wt{h}_\alpha(a)=(\eta(\alpha^\vee), a)_n \cdot \s(h_\alpha(a)) \in \wt{T}.
\end{equation}
\item[$\bullet$] Let $w_\alpha \in G$ be the natural representative of $\w_\alpha\in W$. For any $\wt{y(a)} \in \wt{T}$, one has
\begin{equation} \label{F:W-act}
w_\alpha \cdot \wt{y(a)} \cdot w_\alpha^{-1} = \wt{y(a)} \cdot \wt{h}_\alpha(a^{-\angb{y}{\alpha}}),
\end{equation}
where $\angb{-}{-}$ is the paring between $Y$ and $X$.
\end{itemize}

Consider the sublattice $Y_{Q,n}:=\set{y\in Y: B_Q(y, y')\in n\Z}$ of $Y$. For every $\alpha^\vee\in \Phi^\vee$, define $n_\alpha:= n/\text{gcd}(n, Q(\alpha^\vee))$. Write $\alpha_{Q,n}^\vee:=n_\alpha \alpha^\vee$ and $\alpha_{Q,n}:=n_\alpha^{-1} \alpha$. Let $Y_{Q,n}^{\sct} \subseteq Y$ be the sublattice generated by $\set{\alpha_{Q,n}^\vee}_{\alpha\in \Phi}$. The complex dual group $\wt{G}^\vee$ for $\wt{G}$ as given in \cite{FL},  \cite{Mc1} and \cite{Re} has root data $(Y_{Q,n}, \set{\alpha_{Q,n}^\vee}, \text{Hom}(Y_{Q,n}, \Z), \set{\alpha_{Q,n}})$. In particular, $Y_{Q,n}^{\sct}$ is the root lattice for $\wt{G}^\vee$. What is most pertinent to our paper is that the center $Z(\wt{G}^\vee)$ could be identified as
$$Z(\wt{G}^\vee):=\Hom(Y_{Q,n}/Y_{Q,n}^{\sct}, \C^\times).$$

%%%%%%%%%%%
\subsection{Theta representations $\Theta(\wt{G}, \wchi)$} \label{Sec:Theta}
Fix an embedding $\iota: \mu_n \hookrightarrow \C^\times$. A representation of $\wt{G}$ is called $\iota$-genuine if $\mu_n$ acts via $\iota$. We consider throughout the paper $\iota$-genuine (or simply genuine) representations of $\wt{G}$.

Let $U$ be the unipotent subgroup of $B=TU$. As $U$ splits canonically in $\wt{G}$, we have $\wt{B}=\wt{T}U$. The covering torus $\wt{T}$ is a Heisenberg group with center $Z(\wt{T})$. The image of $Z(\wt{T})$ in $T$ is equal to the image of the isogeny $Y_{Q,n}\otimes F^\times \to T$ induced from $Y_{Q,n} \to Y$.

Let $\wchi \in \Hom_\iota(Z(\wt{T}), \C^\times)$ be a genuine character of $Z(\wt{T})$, write $i(\wchi):=\text{Ind}_{A}^{\wt{T}} \wchi'$ for the induced representation on $\wt{T}$, where $A$ is any maximal abelian subgroup of $\wt{T}$, and $\wchi'$ is any extension of $\wchi$. By the Stone von-Neumann theorem (cf. \cite[Theorem 3.1]{We1} and \cite[Theorem 3]{Mc1}), the construction $\wchi \mapsto i(\wchi)$ gives a bijection between isomorphism classes of genuine representations of $Z(\wt{T})$ and $\wt{T}$. Since we consider an unramified covering group $\wt{G}$ in this paper, we take $\wt{A}$ to be $Z(\wt{T})\cdot (K\cap T)$ from now.

View $i(\wchi)$ as a genuine representation of $\wt{B}$ by inflation from the quotient map $\wt{B} \to \wt{T}$. Write $I(i(\wchi)):=\text{Ind}_{\wt{B}}^{\wt{G}}\ i(\wchi)$ for the normalized induced principal series representation of $\wt{G}$. For simplicity, we may also write $I(\wchi)$ for $I(i(\wchi))$. One knows that $I(\wchi)$ is unramified (i.e. $I(\wchi)^K\ne 0$) if and only if $\wchi$ is unramified, i.e., $\wchi$ is trivial on $Z(\wt{T})\cap K$. We consider in this paper only unramified genuine representations (and characters). In fact, one has the naturally arising abelian extension
%\begin{equation} \label{Ext1}
%\begin{tikzcd}
%\mu_n \ar[r, hook] & \wt{Y}_{Q,n} \ar[r, two heads] & Y_{Q,n}
%\end{tikzcd}
%\end{equation}
\begin{equation} \label{Ext1}
\xymatrix{
\mu_n \ar@{^(->}[r] & \wt{Y}_{Q,n} \ar@{>>}[r] & Y_{Q,n}
}
\end{equation}
such that unramified genuine characters of $\wchi$ of $Z(\wt{T})$ correspond to genuine characters of $\wt{Y}_{Q,n}$. Here $\wt{Y}_{Q,n}:=Z(\wt{T})/Z(\wt{T})\cap K$. Since $\wt{A}/(T\cap K)\simeq \wt{Y}_{Q,n}$ as well, there is a canonical extension (also denoted by $\wchi$) of an unramified character $\wchi$ of $Z(\wt{T})$ to $\wt{A}$, by composing $\wchi$ with $\wt{A} \twoheadrightarrow \wt{Y}_{Q,n}$. Therefore, we will identify $i(\wchi)$ as $\text{Ind}_{\wt{A}}^{\wt{T}}\ \wchi$ for this  $\wchi$.

For any $\w\in W$, the intertwining operator $T_{\w, \chi}: I(\wchi) \to I({}^{\w}\wchi)$ is defined by
$$(T_{\w, \wchi} f)(\wt{g})=\int_{U_{w}} f(w^{-1} u \wt{g}) du$$
whenever it is absolutely convergent. Moreover, it can be meromorphically continued for all $\wt{\chi}$ (cf. \cite[\S 7]{Mc1}). For $I(\wchi)$ unramified and $\w=\w_\alpha$ with $\alpha\in \Delta$, $T_{\w_\alpha, \chi}$ is determined by
$$T_{\w_\alpha, \chi} (f_0) = c(\w_\alpha, \wchi) \cdot f_0' \text{ with } c(\w_\alpha, \wchi)=\frac{1-q^{-1}\wchi(\wt{h}_\alpha(\varpi^{n_\alpha}))}{1-\wchi(\wt{h}_\alpha(\varpi^{n_\alpha})}, $$
where $f_0\in I(\wchi)$ and $f_0' \in I({}^{\w_\alpha} \wchi)$ are the unramified vectors. Moreover, $T_{\w, \wchi}$ satisfies the cocycle condition as in the linear case.  The coefficient $c(\w_\alpha, \wchi)$ was determined in \cite[Theorem 12.1]{Mc2} and  later reformulated in \cite{Gao}. We use the latter formalism which is more suitable for our needs in this paper.

\vskip 5pt

The following definition mimics that in \cite[\S I.2]{KP}.

\begin{dfn} \label{D:excep}
An unramified genuine character $\wchi$ of $Z(\wt{T})$ is called exceptional if $\wchi(\wt{h}_\alpha(\varpi^{n_\alpha}))=q^{-1}$ for all $\alpha\in \Delta$. The theta representation $\Theta(\wt{G}, \wchi)$ associated to an exceptional character $\wchi$ is the unique Langlands quotient (cf. \cite{BJ}) of $I(\wchi)$, which is also equal to the image of the intertwining operator $T_{\w_0, \wchi}: I(\wchi) \to I({}^{\w_0}\wchi)$, where $\w_0\in W$ is the longest Weyl element.
\end{dfn}

The extension $\wt{Y}_{Q,n}$ gives rise to an extension $\wt{Y}_{Q,n}^{\sct}$ of $Y_{Q,n}^{\sct}$ by restriction. All exceptional characters agree on $\wt{Y}_{Q,n}^{\sct}$, and therefore the set of exceptional characters is a torsor over $Z(\wt{G}^\vee)$.

%%%%%%%%%%%
\subsection{Unitary distinguished characters} \label{S:UDC}
Depending on a choice of  a nontrivial additive character $\psi'$ of $F$, a special class of the so-called distinguished genuine characters of $Z(\wt{T})$ is singled out in \cite{GG} for the consideration of the $L$-group extension for $\wt{G}$. Distinguished characters, in the sense of \cite{GG}, may not exist for general Brylinski-Deligne covering groups. However, if $\mbf{G}$ has simply-connected derived group or if the composition $\eta: Y^{\sct} \to F^\times \to F/(F^\times)^n$ is trivial, such characters exist. One special property of a distinguished character is its Weyl-invariance, and thus it could serve as a \emph{distinguished} base point in the set of genuine characters of $Z(\wt{T})$.

For the purpose of \S4-\S7, we recall the explicit construction in \cite{GG} when a distinguished character exists. In particular, we make the above assumption on $\wt{G}$, which is clearly satisfied in the simply-connected case in \S4-\S7. 

First, let $\set{y_i}$ be a basis of $Y_{Q,n}$ such that $\set{k_i y_i}$ is a basis for the lattice $J=nY+Y_{Q,n}^{\sct}$ for some $k_i\in \Z$. Let $\psi'$ be a nontrivial additive character of $F$. Let $\ggma_{\psi'}$ be the Weil index valued in $\mu_4$ satisfying
$$\ggma_{\psi'}(b^2)=1, \ \ggma_{\psi'}(b)^2=(b, b)_2, \ \ggma_{\psi'}(bc)=\ggma_{\psi'}(b) \ggma_{\psi'}(c) \cdot (b, c)_2.$$
For any $a\in F^\times$, let $\psi_a': x\mapsto \psi'(ax)$ be the twisted additive character. Then
$$\ggma_{\psi_a'}(b)=\ggma_{\psi'}(b) \cdot (a, b)_2.$$
By definition, a unitary distinguished character $\wchi^0_{\psi'}$ of $Z(\wt{T})$ is given by
$$\wchi_{\psi'}^0(y_i(a))=\ggma_{\psi'}(a)^{\frac{2(k_i-1)Q(y_i)}{n}},$$
and for $y=\sum_i n_i y_i$ and $a\in F^\times$,
\begin{equation} \label{UDC}
\wchi_{\psi'}^0(y(a))=(a, a)_n^{\sum_{i<j} n_i n_j D(y_i, y_j)} \cdot \prod_i \wchi_{\psi'}^0(y_i(a^{n_i}))^{\frac{2(k_i-1)Q(y_i)}{n}}.
\end{equation}
Note that in \cite{GG}, the exponent of $\ggma_{\psi'}(a)$ in the formula of $\wchi_{\psi'}^0(y_i(a))$ is the negative of what we use here. However, both give rise to distinguished characters.

\subsection{Conventions and notations}

Let $2\rho:=\sum_{\alpha^\vee >0} \alpha^\vee$ be the sum of all positive coroots of $\mbf{G}$. Consider the affine translation $\ell_\rho: Y\otimes \Q \to Y\otimes \Q$ given by $y\mapsto y-\rho$. Write $\w(y)$ for the natural Weyl group action on $Y$ and $Y\otimes \Q$. Endow the codomain of $\ell_\rho$ with this action. By transport of structure, one has an induced action of $W$ on the domain of $\ell_\rho$ (i.e. the first $Y\otimes \Q$), which we denote by $\w[y]$. That is,
$$\w[y]:=\w(y-\rho)+ \rho.$$
Clearly $Y$ is stable under this action. Write $y_\rho:=y-\rho$ for any $y\in Y$, then $\w[y]-y=\w(y_\rho) - y_\rho$. From now, by Weyl orbits in $Y$ or $Y\otimes \Q$  we always refer to the ones with respect to the action $\w[y]$. Write $\mca{O}$ (respectively $\mca{O}^\digamma$) for the set of $W$-orbits (resp. free $W$-orbits) in $Y$.

We remark that for $\mbf{GL}_r$, the Weyl-action considered by Kazhdan-Patterson (see \cite[page 78]{KP}) is actually $\w(y+\rho) -\rho$. However, as the indexing of Whittaker functionals also differs from ours  by taking an ``inverse", thus our terminology is different but equivalent to that of \cite{KP}.

\begin{dfn}
For any subgroup $\Lambda \subseteq Y$, a free orbit $\mca{O}_y \in \mca{O}^\digamma$ is called $\Lambda$-free if the quotient map $Y \to Y/\Lambda$ is injective on $\mca{O}_y$. We write $\OF_\Lambda \subseteq \OF$ for the set of $\Lambda$-free orbits of $Y$.
\end{dfn}

Note that $\Lambda$-free orbits are assumed to be free by definition. For simplicity, we will write $\mca{O}_{Q, n, \sct}^\digamma$ and $\mca{O}_{Q,n}^\digamma$ for the set of $Y_{Q,n}^\sct$ and $Y_{Q,n}$-free orbits of $Y$ respectively. Clearly, the inclusions $\mca{O} \supseteq \OF \supseteq \OF_{Q, n, \sct} \supseteq \OF_{Q,n}$ hold.
\vskip 5pt

Generally, notations will be either self-explanatory or explained the first time they occur. For convenience, we list some notations which appear frequently in the text:
\vskip 5pt

$\vep:$ the element $\iota\big( (-1, \varpi)_n\big) \in \C^\times$. In particular, for $n$ odd, $\vep=1$. We use freely in the paper the following identity:
$$\vep^{D(y, y')}=\vep^{D(y', y)} \text{ for any } y\in Y_{Q,n}, y'\in Y.$$

$\wp_{Q, n}$:  the projection $Y \to Y/Y_{Q,n}$.

$\wp^{\sct}_{Q,n}$:  the projection $Y \to Y/Y_{Q,n}^{\sct}$.

$\psi$: a fixed additive character of $F$ into $\C^\times$ with conductor $O_F$. For any $a\in F^\times$, the twisted character $\psi_a$ is given by $\psi_a: x\mapsto \psi(ax)$.

$\s_y$: for any $y\in Y$, we write $\s_y:=\s(\varpi^y)\in \wt{T}$.

$\ceil{x}$: the minimum integer such that $\ceil{x}\ge x$ for a real number $x$.
%%%%%%%%%%%%%%%%%%%%%%%%%%%%%%%%%%%%%%%%%%%%%%%%%%%%%%%%%%%%%%%
\vskip 15pt

\section{Bounds for $\dim \Wh(\Theta(\wt{G}, \wchi))$}

\subsection{Whittaker functionals}
We follow the notations in \S 2.2. In particular, consider the principal series $I(\wt{\chi}):=I(i(\wchi))$ for an unramified character $\wchi \in \text{Hom}_\iota (Z(\wt{T}), \C^\times)$. 

Let $\Ftn(i(\wchi))$ be the vector space of  functions $\cc$ on $\wt{T}$  satisfying
$$\cc(\wt{t} \cdot \wt{z}) =  \cc(\wt{t}) \cdot \wchi(\wt{z}), \quad \wt{t} \in \wt{T} \text{ and } \wt{z} \in \wt{A}.$$
The support of any $\cc \in \Ftn(i(\wchi))$ is a disjoint union of cosets in $\wt{T}/\wt{A}$. Moreover, $\dim \Ftn(i(\wchi))=\val{Y/Y_{Q,n}}$ since $\wt{T}/\wt{A}$ has the same size as $Y/Y_{Q,n}$. 

There is a natural isomorphism of vector spaces $\Ftn(i(\wchi)) \simeq i(\wchi)^\vee$, where $i(\wchi)^\vee$ is the complex dual space of functionals of $i(\wchi)$. More explicitly, let $\set{\gamma_i}\subseteq \wt{T}$ be a chosen set of representatives of $\wt{T}/\wt{A}$, consider $\cc_{\gamma_i} \in \Ftn(i(\wchi))$ which has support $\gamma_i \cdot \wt{A}$ and $\cc_{\gamma_i}(\gamma_i)=1$. It gives rise to a linear functional $\lambda_{\gamma_i}^{\wchi} \in i(\wchi)^\vee$ such that $\lambda_{\gamma_i}^{\wchi}(f_{\gamma_j})=\delta_{ij}$, where $f_{\gamma_j}\in i(\wchi)$ is the unique element such that $\text{supp}(f_{\gamma_j})=\wt{A}\cdot \gamma_j^{-1}$ and $f_{\gamma_j}(\gamma_j^{-1})=1$. That is, $f_{\gamma_j}=i(\wt{\chi})(\gamma_j)\phi_0$, where $\phi_0\in i(\wchi)$ is the normalized unramified vector of $i(\wchi)$ such that $\phi_0(1_{\wt{T}})=1$.  Thus, the isomorphism $\Ftn(i(\wchi)) \simeq i(\wchi)^\vee$ is given explicitly by
$$ \cc   \mapsto   \lambda_\cc^{\wchi}:= \sum_{\gamma_i \in \wt{T}/\wt{A}} \cc(\gamma_i) \lambda_{\gamma_i}^{\wchi}.
$$
It can be checked easily that the isomorphism does not depend on the choice of representatives for $\wt{T}/\wt{A}$.

%For $\w_\alpha\in W$, one has $\Ftn(i(\wchi))=\Ftn({}^{w_\alpha} i(\wchi))$. On the other hand, ${}^{w_\alpha} i(\wchi) \simeq i({}^{\w_\alpha} \wchi)$ canonically by the Stone von-Neumann theorem. This gives a canonical vector space isomorphism $\Ftn({}^{w_\alpha}i(\wchi))\simeq \Ftn(i({}^{\w_\alpha}\wchi))$ and also
%\begin{equation} \label{vec iso}
%\Ftn(i(\wchi)) \simeq \Ftn(i({}^{\w_\alpha}\wchi)).
%\end{equation}

Let $\psi_U: U\to \C^\times$ be the character on $U$ such that its restriction to every $U_\alpha, \alpha\in \Delta$ is given by $\psi \circ e_\alpha^{-1}$. We may write $\psi$ for $\psi_U$ if no confusion arises.

\begin{dfn}
For any genuine representation $(\wt{\sigma}, V_{\wt{\sigma}})$ of $\wt{G}$, a linear functional $\ell: V_{\wt{\sigma}} \to \C$ is called a $\psi$-Whittaker functional if $\ell(\wt{\sigma}(u) v)= \psi(u) \cdot v$ for all $u\in U$ and $v\in V_{\wt{\sigma}}$.
Write $\Wh(\wt{\sigma})$ for the space of $\psi$-Whittaker functionals for $\wt{\sigma}$.
\end{dfn}

There is an isomorphism between $i(\wchi)^\vee$ and the space $\Wh(I(\wchi))$ of $\psi$-Whittaker functionals  on $I(\wt{\chi})$ (cf. \cite[\S 6]{Mc2}), given by $\lambda \mapsto W_\lambda$ with
$$W_\lambda:  I(\wt{\chi}) \to \C, \quad f \mapsto \lambda \left( \int_{U} f(w_0^{-1}u) \psi(u)^{-1} \mu(u) \right),$$
where $f\in I(\wt{\chi})$ is an $i(\wt{\chi})$-valued function on $\wt{G}$. Here $w_0=w_{\alpha_1} w_{\alpha_2} ... w_{\alpha_k}\in K$ is a representative of $\w_0$, where $\w_0=\w_{\alpha_1} \w_{\alpha_2} ... \w_{\alpha_k}$ is a minimum decomposition of $\w_0$. For any $\cc\in \Ftn(i(\wchi))$, by abuse of notation, we will write $\lambda_\cc^{\wchi} \in \Wh(I(\wchi))$ for the resulting $\psi$-Whittaker functional of $I(\wt{\chi})$ from the isomorphism $\Ftn(i(\wchi))\simeq i(\wchi)^\vee \simeq \Wh(I(\wchi))$. An easy consequence is
$$\dim \Wh(I(\wchi)) = \val{Y/Y_{Q,n}}.$$

\vskip 5pt
Let $J(\w, \wchi)$ be the image of $T_{\w, \wchi}$. The operator $T_{\w, \wchi}$ induces a homomorphism $T_{\w, \wchi}^*$ of vectors spaces with image $\Wh(J(\w, \wt{\chi}))$:
%$$\begin{tikzcd}
%T_{\w, \wchi}^*: \Wh(I({}^{\w}\wchi))\ar[r]  \ar[dr, two heads] & \Wh(I(\wchi)) \\
% &  \Wh(J(\w, \wt{\chi})) \ar[u, hook],
%\end{tikzcd}$$
$$\xymatrix{
T_{\w, \wchi}^*: \Wh(I({}^{\w}\wchi)) \ar[r]  \ar@{>>}[rd] & \Wh(I(\wchi)) \\
 &  \Wh(J(\w, \wt{\chi})), \ar@{^(->}[u]
}$$
which is given by 
$\angb{\lambda_\cc^{{}^{\w}\wchi} }{-} \mapsto \angb{\lambda_\cc^{{}^{\w}\wchi} }{T_{\w,\wchi}(-)}$ for any $\cc \in \Ftn(i({}^{\w}\wchi))$. Let $\set{\lambda_{\gamma}^{^{\w}\wchi}}_{\gamma \in \wt{T}/\wt{A}}$ be a basis for  $\Wh(I({}^{\w}\wchi))$, and $\set{ \lambda_{\gamma'}^{\wchi} }$ a basis for $\Wh(I(\wchi))$. The map $T_{\w,\wchi}^*$ is
then determined by the square matrix
$[\tau(\wchi, \w, \gamma, \gamma')]_{\gamma, \gamma'\in \wt{T}/\wt{A}}$ of size $\val{Y/Y_{Q,n}}$
such that
$$T_{\w, \wchi}^*(\lambda_{\gamma}^{^{\w}\wchi}) = \sum_{\gamma'\in \wt{T}/\wt{A}} \tau(\wchi, \w, \gamma, \gamma') \cdot \lambda_{\gamma'}^{\wchi}.$$

Some immediate properties are as follows.
\begin{lm} \label{L:CBF}
For $\w\in W$ and $\wt{z}, \wt{z}'\in \wt{A}$, the identity holds:
$$\tau(\wchi, \w, \gamma \cdot \wt{z}, \gamma' \cdot \wt{z}')=({}^{\w}\wchi)^{-1}(\wt{z}) \cdot \tau(\wchi, \w, \gamma, \gamma') \cdot \wchi(\wt{z}').$$
Moreover, for $\w_1, \w_2 \in W$ such that $l(\w_2\w_1)=l(\w_2) + l(\w_1)$, one has
$$\tau(\wchi, \w_2\w_1, \gamma, \gamma')=\sum_{\gamma''\in \wt{T}/\wt{A}} \tau({}^{\w_1}\wchi, \w_2, \gamma, \gamma'') \cdot \tau(\wchi, \w_1, \gamma'', \gamma'),$$
which is referred to as the cocycle relation.
\end{lm}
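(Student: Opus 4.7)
My proof plan for Lemma \ref{L:CBF}:

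\textbf{First identity (transformation law).} The plan is to unpack how $\lambda^\wchi_\gamma$ depends on the chosen representative $\gamma$. Starting from the function $\cc_\gamma\in\Ftn(i(\wchi))$ supported on $\gamma\wt{A}$ with $\cc_\gamma(\gamma)=1$, a short computation using the defining equivariance $\cc(\wt{t}\wt{z})=\cc(\wt{t})\wchi(\wt{z})$ shows that $\cc_{\gamma\wt{z}} = \wchi(\wt{z})^{-1}\cdot \cc_\gamma$ as functions on $\wt{T}$; under the isomorphism $\Ftn(i(\wchi))\simeq i(\wchi)^\vee\simeq \Wh(I(\wchi))$ this translates to the scaling rule $\lambda^\wchi_{\gamma\wt{z}} = \wchi(\wt{z})^{-1}\lambda^\wchi_\gamma$, and analogously $\lambda^{\w\wchi}_{\gamma\wt{z}}=({}^{\w}\wchi)(\wt{z})^{-1}\lambda^{\w\wchi}_\gamma$. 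Then apply $T^*_{\w,\wchi}$, which is linear:
$$T^*_{\w,\wchi}(\lambda^{\w\wchi}_{\gamma\wt{z}}) = ({}^{\w}\wchi)(\wt{z})^{-1}\cdot \sum_{\gamma''}\tau(\wchi,\w,\gamma,\gamma'')\,\lambda^\wchi_{\gamma''}.$$
Substituting $\lambda^\wchi_{\gamma''}=\wchi(\wt{z}')\,\lambda^\wchi_{\gamma''\wt{z}'}$ and comparing coefficients of $\lambda^\wchi_{\gamma'\wt{z}'}$ yields the claimed identity.

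\textbf{Cocycle relation.} The plan is to transport the cocycle identity for intertwining operators through the adjoint. Recall from the paragraph preceding the definition of exceptional characters that $T_{\w,\wchi}$ satisfies the cocycle condition as in the linear case; concretely, when $l(\w_2\w_1)=l(\w_2)+l(\w_1)$,
$$T_{\w_2\w_1,\wchi} = T_{\w_2,{}^{\w_1}\wchi}\circ T_{\w_1,\wchi}\colon I(\wchi)\longrightarrow I({}^{\w_2\w_1}\wchi).$$
Taking adjoints reverses composition:
$$T^*_{\w_2\w_1,\wchi} = T^*_{\w_1,\wchi}\circ T^*_{\w_2,{}^{\w_1}\wchi}\colon \Wh(I({}^{\w_2\w_1}\wchi))\longrightarrow \Wh(I(\wchi)).$$

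\textbf{Combining.} Applying the RHS to $\lambda^{\w_2\w_1\wchi}_\gamma$ and expanding step by step gives
$$T^*_{\w_2,{}^{\w_1}\wchi}(\lambda^{\w_2\w_1\wchi}_\gamma) = \sum_{\gamma''}\tau({}^{\w_1}\wchi,\w_2,\gamma,\gamma'')\,\lambda^{\w_1\wchi}_{\gamma''},\qquad T^*_{\w_1,\wchi}(\lambda^{\w_1\wchi}_{\gamma''})=\sum_{\gamma'}\tau(\wchi,\w_1,\gamma'',\gamma')\,\lambda^\wchi_{\gamma'},$$
and matching coefficients of $\lambda^\wchi_{\gamma'}$ with the expansion of $T^*_{\w_2\w_1,\wchi}(\lambda^{\w_2\w_1\wchi}_\gamma)$ gives the cocycle relation.

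\textbf{Main obstacle.} Both assertions are formal consequences once the dictionary $\cc\leftrightarrow\lambda_\cc^\wchi\leftrightarrow W_{\lambda_\cc^\wchi}$ is in place and the intertwining cocycle is available. The only point requiring genuine care is the reduced-word hypothesis $l(\w_2\w_1)=l(\w_2)+l(\w_1)$, which is essential for the underlying intertwining cocycle to hold on the nose (without introducing the Plancherel-type correction that appears in the non-reduced case); this is precisely where the assumption of the lemma is used.
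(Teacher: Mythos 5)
Your proof is correct and follows essentially the same route as the paper's own (very terse) proof, which simply says the first identity is a change of basis and the second is the cocycle relation for intertwining operators. Your fleshing out of both steps — deriving $\lambda^\wchi_{\gamma\wt{z}}=\wchi(\wt{z})^{-1}\lambda^\wchi_\gamma$ from the equivariance of elements of $\Ftn(i(\wchi))$, then applying linearity of $T^*_{\w,\wchi}$ for the first identity, and taking the adjoint of $T_{\w_2\w_1,\wchi}=T_{\w_2,{}^{\w_1}\wchi}\circ T_{\w_1,\wchi}$ for the second — is accurate, and your closing remark correctly locates where the reduced-length hypothesis enters.
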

\begin{proof} The first equality follows from a change of basis formula from a different choice of representations for $\wt{T}/ \wt{A}$. The second equality follows from the cocycle relation of intertwining operators.
\end{proof}
%%%%%%
\vskip 5pt

\subsection{Reduction of $\Wh(\Theta(\wt{G}, \wchi))$}

Let $\w_0$ be the longest Weyl element of $\mbf{G}$. Consider the theta representation $\Theta(\wt{G}, \wchi)=T_{\w_0, \wchi}(I(\wchi))$ attached to an unramified exceptional character $\wchi$ (see Definition \ref{D:excep}). 

\begin{dfn} \label{D:DTR}
A theta representation $\Theta(\wt{G}, \wchi)$ attached to an unramified exceptional  genuine character $\wchi$ is called distinguished if $\dim \Wh(\Theta(\wt{G}, \wchi))=1$.
\end{dfn}

The distinguishedness of a theta representation here is not to be confused with that in distinguished genuine character as given in \S \ref{S:UDC}.

%\begin{rmk} Our definition of distinguishedness is less-refined than the usual notion in literature (see for example \cite{Li}), since we fix our Whittaker datum $(B=TU, \psi_U)$. For the consideration of Fourier expansion of a cusp form, the Rankin-Selberg integral etc, one would like to impose the stronger condition that $\dim \Wh(\Theta(\wt{G}, \wchi))=1$ for a unique Whittaker datum. However, as we prove broader results, the desired distinguished $\Theta(\wt{G}, \wchi)$ can be inferred from our Theorems \ref{T:MT1} and \ref{T:MT2}.
%\end{rmk}

\begin{prop} \label{P:inter}
Let $\wchi$ be an unramified exceptional character of $\wt{G}$, and $\Delta$ the set of simple roots. Then
$$\Wh(\Theta(\wt{G}, \wchi)) =\bigcap_{\alpha\in \Delta} \text{Ker}\big( T^*_{\w_\alpha, {}^{\w_\alpha}\wchi}: \Wh(I(\wchi)) \to \Wh(I(^{\w_\alpha}\wchi)) \big),$$
where $T_{\w_\alpha, {}^{\w_\alpha}\wchi}$ is the intertwining operator from  $I(^{\w_\alpha}\wchi)$ to $I(\wchi)$.
\end{prop}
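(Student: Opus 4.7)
The plan is to establish both inclusions, reducing the statement to comparing two subrepresentations of $I(\wchi)$ at the Whittaker level. Writing $N := \ker(I(\wchi) \twoheadrightarrow \Theta(\wt{G}, \wchi))$, one identifies $\Wh(\Theta(\wt{G}, \wchi))$ with the functionals in $\Wh(I(\wchi))$ vanishing on $N$. Similarly, $\ker(T^*_{\w_\alpha, {}^{\w_\alpha}\wchi})$ consists of those functionals vanishing on $J(\w_\alpha, {}^{\w_\alpha}\wchi) := \text{Image}(T_{\w_\alpha, {}^{\w_\alpha}\wchi}) \subseteq I(\wchi)$. Thus the proposition reduces to showing that $N$ and $\sum_{\alpha \in \Delta} J(\w_\alpha, {}^{\w_\alpha}\wchi)$ cut out the same subspace of $\Wh(I(\wchi))$.

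For the inclusion $\subseteq$, I would show $J(\w_\alpha, {}^{\w_\alpha}\wchi) \subseteq N$ for each simple $\alpha$. The exceptional condition $\wchi(\wt{h}_\alpha(\varpi^{n_\alpha})) = q^{-1}$, combined with the Weyl action rule for $\wt{T}$, gives $({}^{\w_\alpha}\wchi)(\wt{h}_\alpha(\varpi^{n_\alpha})) = q$, whence
$$c(\w_\alpha, {}^{\w_\alpha}\wchi) = \frac{1 - q^{-1} \cdot q}{1 - q} = 0.$$
Therefore $T_{\w_\alpha, {}^{\w_\alpha}\wchi}$ annihilates the normalized unramified vector of $I({}^{\w_\alpha}\wchi)$, so the unramified vector of $I(\wchi)$ does not lie in $J(\w_\alpha, {}^{\w_\alpha}\wchi)$, forcing the latter to be a proper subrepresentation. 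Since $\Theta(\wt{G}, \wchi)$ is the unique irreducible (Langlands) quotient of $I(\wchi)$, the kernel $N$ is the unique maximal proper subrepresentation, and hence $J(\w_\alpha, {}^{\w_\alpha}\wchi) \subseteq N$ for every simple $\alpha$. This yields $\Wh(\Theta(\wt{G}, \wchi)) \subseteq \bigcap_\alpha \ker(T^*_{\w_\alpha, {}^{\w_\alpha}\wchi})$.

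For the reverse inclusion $\supseteq$, I would use the characterization $\Wh(\Theta(\wt{G}, \wchi)) = \text{Image}(T^*_{\w_0, \wchi})$ coming from $\Theta(\wt{G}, \wchi) = T_{\w_0, \wchi}(I(\wchi))$, and show that any element of the intersection factors through $T^*_{\w_0, \wchi}$. The key input is a rank-one vanishing statement: the composition $T_{\w_\alpha, \wchi} \circ T_{\w_\alpha, {}^{\w_\alpha}\wchi}$ on $I({}^{\w_\alpha}\wchi)$ is the zero operator, a Plancherel-type identity whose underlying scalar factor $c(\w_\alpha, \wchi) \cdot c(\w_\alpha, {}^{\w_\alpha}\wchi)$ vanishes thanks to the second factor. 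Combined with the length-additive cocycle factorization $T_{\w_0, \wchi} = T_{\w_0 \w_\alpha, {}^{\w_\alpha}\wchi} \circ T_{\w_\alpha, \wchi}$ (Lemma \ref{L:CBF}), an induction on $l(\w_0)$ would allow repeated lifting: using the rank-one exactness $\ker(T^*_{\w_\alpha, {}^{\w_\alpha}\wchi}) = \text{Image}(T^*_{\w_\alpha, \wchi})$, one first writes $\lambda = T^*_{\w_\alpha, \wchi}(\mu)$, then applies the inductive hypothesis to the shorter Weyl element $\w_0 \w_\alpha$.

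The hard part will be the rank-one exactness statement: in the covering setting the Whittaker space of a rank-one principal series can be higher-dimensional, so a scalar Plancherel-measure argument does not suffice. Instead, one must perform a matrix-level computation using the explicit formulas for the Shahidi local coefficient matrices from \cite{Mc2}, reformulated in \cite{Gao}, to verify that $[\tau(\wchi, \w_\alpha, \cdot, \cdot)]$ and $[\tau({}^{\w_\alpha}\wchi, \w_\alpha, \cdot, \cdot)]$ satisfy complementary rank conditions in $\Wh(I(\wchi))$. This matrix-theoretic identity is the technical crux of the argument and is where the exceptional character condition is used most essentially.
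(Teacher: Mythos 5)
The paper itself does not write out a proof here; it simply cites Kazhdan--Patterson \cite[Theorem I.2.9]{KP} and declares the same argument applies \emph{mutatis mutandis}. So the comparison has to be with that reference rather than with any text in this paper.

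Your forward inclusion is correct and self-contained. The computation $c(\w_\alpha, {}^{\w_\alpha}\wchi) = (1 - q^{-1}\cdot q)/(1-q) = 0$ is right, and the deduction that $J(\w_\alpha, {}^{\w_\alpha}\wchi)$ is a proper submodule of $I(\wchi)$ hence contained in the radical $N$ (using that the Langlands quotient is the unique irreducible quotient, equivalently $N$ is the unique maximal proper submodule) is the standard argument and matches what KP do.

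The reverse inclusion as you have sketched it has a genuine gap, and it is not only the rank-one exactness $\ker(T^*_{\w_\alpha, {}^{\w_\alpha}\wchi}) = \operatorname{Im}(T^*_{\w_\alpha, \wchi})$ that you flag. The induction itself is not well-posed: after writing $\lambda = T^*_{\w_\alpha, \wchi}(\mu)$ with $\mu \in \Wh(I({}^{\w_\alpha}\wchi))$, you propose to ``apply the inductive hypothesis to $\w_0\w_\alpha$.'' But $\w_0\w_\alpha$ is not the long element of any standard Levi, and ${}^{\w_\alpha}\wchi$ is not an exceptional character (indeed $c(\w_\alpha, {}^{\w_\alpha}\wchi)=0 \neq q^{-1}$ precisely because the exceptional condition is destroyed by $\w_\alpha$). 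So the statement one would need as inductive hypothesis --- an equality between $\operatorname{Im}(T^*_{\w_0\w_\alpha, {}^{\w_\alpha}\wchi})$ and some intersection of kernels --- is not a case of the proposition being proved, and you never specify what form it should take or why $\mu$ lands in the relevant intersection. Without that, ``repeated lifting'' does not get off the ground. A further (smaller) point: that $T_{\w_\alpha,\wchi}\circ T_{\w_\alpha,{}^{\w_\alpha}\wchi}$ is a scalar multiple of the identity is used but not justified; for covering groups $\operatorname{End}_{\wt{G}}(I({}^{\w_\alpha}\wchi))$ can be large when $I$ is reducible, so one must appeal to meromorphic continuation from the irreducible locus (or, as KP actually do, argue at the level of the finite-dimensional $\tau$-matrices, where the Plancherel identity becomes a matrix identity $\tau(\wchi,\w_\alpha,\cdot,\cdot)\cdot\tau({}^{\w_\alpha}\wchi,\w_\alpha,\cdot,\cdot)=0$). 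In short, your outline correctly isolates the ingredients, but the actual KP argument proceeds by establishing the rank-one exactness directly via the explicit $\tau$-matrix and then propagating along a reduced expression for $\w_0$ via the cocycle relation --- a structure that avoids the ill-formed induction you describe, and which you would need to reproduce in detail to close the gap.
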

\begin{proof}
The same proof for \cite[Theorem I.2.9]{KP} applies here mutatis mutantis.
\end{proof}

Let $\lambda_{\gamma}^{\wchi} \in \Wh(I(\wchi))$ and $\alpha\in \Delta$, then
$$T^*_{\w_\alpha, {}^{\w_\alpha}\wchi}(\lambda_{\gamma}^{\wchi}) = \sum_{\gamma'} \tau({}^{\w_\alpha}\wchi, \w_\alpha, \gamma, \gamma') \cdot \lambda_{\gamma'}^{{}^{\w_\alpha}\wchi}.$$
In general, let $\cc\in \Ftn(i(\wchi))$, and write
$$\lambda_\cc^{\wchi} = \sum_{\gamma \in \wt{T}/\wt{A}} \cc(\gamma) \lambda_{\gamma}^{\wchi} .$$
Then,
\begin{align*}
T^*_{\w_\alpha, {}^{\w_\alpha}\wchi}(\lambda_\cc^{\wchi}) & = \sum_{\gamma} \cc(\gamma) \left( \sum_{\gamma'} \tau({}^{\w_\alpha}\wchi, \w_\alpha, \gamma, \gamma') \cdot \lambda_{\gamma'}^{{}^{\w_\alpha}\wchi} \right) \\
& = \sum_{\gamma'} \left( \sum_{\gamma} \cc(\gamma) \tau({}^{\w_\alpha}\wchi, \w_\alpha, \gamma, \gamma') \right) \lambda_{\gamma'}^{{}^{\w_\alpha}\wchi} .
\end{align*}

As an immediate consequence of Proposition \ref{P:inter}, one has (see also \cite[page 76]{KP}):
\begin{cor} \label{C:iff-1}
A function $\cc \in \Ftn(i(\wchi))$ gives rise to a functional in $\Wh(\Theta(\wt{G}, \wchi))$ (i.e. $\lambda_\cc^{\wchi} \in \Wh(\Theta(\wt{G}, \wchi))$) if and only if
for all $\alpha\in \Delta$,
$$\sum_{\gamma \in \wt{T}/\wt{A}} \cc(\gamma) \tau({}^{\w_\alpha}\wchi, \w_\alpha, \gamma, \gamma')=0 \text{ for all } \gamma'.$$
The left hand side is independent of the choice of representatives for $\wt{T}/\wt{A}$ by Lemma \ref{L:CBF}.
\end{cor}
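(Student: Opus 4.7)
The plan is to deduce this corollary as an immediate formal consequence of Proposition \ref{P:inter} together with the matrix expansion of $T^*_{\w_\alpha, {}^{\w_\alpha}\wchi}$ that appears in the paragraph immediately preceding the statement. First, I would invoke Proposition \ref{P:inter} to reduce the condition $\lambda_\cc^\wchi \in \Wh(\Theta(\wt{G}, \wchi))$ to the system of equations $T^*_{\w_\alpha, {}^{\w_\alpha}\wchi}(\lambda_\cc^\wchi) = 0$ indexed by $\alpha \in \Delta$. Substituting $\lambda_\cc^\wchi = \sum_\gamma \cc(\gamma)\, \lambda_\gamma^\wchi$, linearity of $T^*_{\w_\alpha, {}^{\w_\alpha}\wchi}$ combined with the expansion
$$T^*_{\w_\alpha, {}^{\w_\alpha}\wchi}(\lambda_\gamma^\wchi) = \sum_{\gamma'} \tau({}^{\w_\alpha}\wchi, \w_\alpha, \gamma, \gamma')\, \lambda_{\gamma'}^{{}^{\w_\alpha}\wchi}$$
recorded just before the corollary yields
$$T^*_{\w_\alpha, {}^{\w_\alpha}\wchi}(\lambda_\cc^\wchi) = \sum_{\gamma'} \Bigl( \sum_\gamma \cc(\gamma)\, \tau({}^{\w_\alpha}\wchi, \w_\alpha, \gamma, \gamma') \Bigr)\, \lambda_{\gamma'}^{{}^{\w_\alpha}\wchi}.$$

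Next, I would appeal to the fact that $\{ \lambda_{\gamma'}^{{}^{\w_\alpha}\wchi} \}_{\gamma' \in \wt{T}/\wt{A}}$ is a basis of $\Wh(I({}^{\w_\alpha}\wchi))$, which is the analogue for the character ${}^{\w_\alpha}\wchi$ of the identification $\Wh(I(\wchi)) \simeq \Ftn(i(\wchi))$ obtained earlier in this section. By linear independence of this basis, the vanishing of the above functional is equivalent to the vanishing of the inner bracketed sum for every $\gamma'$, which is precisely the condition asserted in the corollary. Both implications are established simultaneously in this way.

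To conclude, I would verify the final independence-of-representatives claim. Replacing $\gamma$ by $\gamma \cdot \wt{z}$ for some $\wt{z} \in \wt{A}$ multiplies $\cc(\gamma)$ by $\wchi(\wt{z})$, by the defining equivariance $\cc(\gamma \cdot \wt{z}) = \cc(\gamma) \cdot \wchi(\wt{z})$ of elements of $\Ftn(i(\wchi))$. On the other hand, Lemma \ref{L:CBF} applied with base character ${}^{\w_\alpha}\wchi$ and Weyl element $\w_\alpha$ shows that $\tau({}^{\w_\alpha}\wchi, \w_\alpha, \gamma \cdot \wt{z}, \gamma')$ acquires the scalar $({}^{\w_\alpha}({}^{\w_\alpha}\wchi))^{-1}(\wt{z}) = \wchi^{-1}(\wt{z})$, which cancels the previous factor so that the inner sum is unchanged. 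There is no substantive obstacle in the argument; the only point requiring care is the bookkeeping of the two Weyl twists on the base character, ensuring that the factor produced by Lemma \ref{L:CBF} is indeed $\wchi^{-1}(\wt{z})$ rather than $({}^{\w_\alpha}\wchi)^{-1}(\wt{z})$, so that cancellation with $\wchi(\wt{z})$ actually takes place.
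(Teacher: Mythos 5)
Your proposal is correct and follows essentially the same approach as the paper: the corollary is obtained directly from Proposition \ref{P:inter} via the matrix expansion of $T^*_{\w_\alpha, {}^{\w_\alpha}\wchi}$ on the basis $\{\lambda_\gamma^{\wchi}\}$, exactly as in the computation the paper records just before the statement. Your careful verification of the representative-independence claim — tracking that Lemma \ref{L:CBF} produces the factor $({}^{\w_\alpha}({}^{\w_\alpha}\wchi))^{-1}(\wt{z}) = \wchi^{-1}(\wt{z})$, which cancels the $\wchi(\wt{z})$ from the equivariance of $\cc$ — is the right bookkeeping and confirms the paper's parenthetical remark.
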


\vskip 5pt
\subsection{The Shahidi local coefficient matrix}
We would like to compute the matrix $\left[\tau(\wchi, \w_\alpha, \gamma, \gamma')\right]_{\gamma, \gamma'}$ for any unramified character $\wchi$ (not necessarily exceptional) and simple reflection $\w_\alpha, \alpha\in \Delta$. 

For Kazhdan-Patterson coverings $\wt{\GL}_r^{(n)}$, the matrix $\left[\tau(\wchi, \w_\alpha, \gamma, \gamma')\right]_{\gamma, \gamma'}$ is firstly studied in \cite{KP}. It also appears in the work of Suzuki \cite{Suz1}, Chinta and Offen \cite{CO} among others. For a subclass of Brylinski-Deligne covering groups,
the study of matrix $\left[\tau(\wchi, \w_\alpha, \gamma, \gamma')\right]_{\gamma, \gamma'}$ is conducted in \cite{Mc2} for unramified characters $\chi$, generalizing that of Kazhdan and Patterson. Meanwhile, for ramified characters, it is included in the work of \cite{GS}. However, in order to work with the full class of Brylinski-Deligne covering groups and also remove the assumption $\mu_{2n}\subseteq F^\times$ in \cite{Mc2}, we refine the computation in \cite{Mc2} slightly. This is achieved by invoking the structural facts of Brylinski-Deligne covering groups, in particular those from \S \ref{Sec:SF}. We also note that interesting phenomena dissipate when the assumption $\mu_{2n}\subseteq F^\times$ is imposed, for example for the type $\TA_r$ case in \S4. There are subtleties arising from the fact that $-1$ is not a square root. For this purpose, it is important to rigidify the formula for the matrix and express its entries in terms of naturally defined elements of the group.

Consider the Haar measure $\mu$ of $F$ such that $\mu(O_F)=1$. Thus, $\mu(O_F^\times)=1-1/q$.  The Gauss sum is given by
$$G_\psi(a, b)=\int_{O^\times_F} (u, \varpi)_n^a \cdot \psi(\varpi^b u) \mu(u), \quad a, b\in \Z.$$
It is known that
\begin{equation*}
G_\psi(a, b)=
\begin{cases}
0 & \text{ if } b<-1, \\
1-1/q & \text{ if } n| a, b\ge 0,\\
0 &\text{ if } n\nmid a, b\ge 0, \\
-1/q &\text{ if } n|a, b=-1,\\
G_\psi(a, -1) \text{ with } |G_\psi(a,-1)|=q^{-1/2} &\text{ if } n\nmid a, b=-1.
\end{cases}
\end{equation*}
Recall $\vep:=\iota((-1,\varpi)_n) \in \C^\times$. One has $\overline{G_\psi(a, b)}=\vep^a \cdot G_\psi(-a, b)$. For any $k\in \Z$, we write
$$\mathbf{g}_{\psi}(k):=G_{\psi}(k, -1).$$

As in \cite[\S 9]{Mc2}, let $f_{\gamma'} \in I(\wt{\chi})$ be the function with $\text{supp}(f_{\gamma'})=\wt{B}w_0 K_1, \ f_{\gamma'}(w_0^{-1})=i(\wchi)(\gamma') \phi_0$ for a certain compact open subgroup $K_1$. Here $\phi_0 \in i(\wchi)^{T\cap K}$ is the unramified vector in $i(\wchi)$. 
From \cite[Corollary 9.2]{Mc2}, one has $\tau(\wchi, \w_\alpha, \gamma, \gamma') =\angb{\lambda_\gamma^{{}^{\w_\alpha}\wchi} }{T_{\w_\alpha,\wchi}(f_{\gamma'})}/\val{U^-\cap K_1}$. More precisely, from Equality (9.3) of \cite{Mc2} one could evaluate $\tau(\wchi, \w_\alpha, \gamma, \gamma')$ by applying $\lambda^{{}^{\w_\alpha} \wchi}_{\gamma} \in i({}^{\w_\alpha}\wchi)^\vee$ to the integral
\begin{align} \label{Int}
& \int_F f_{\gamma'}\big( \wt{h}_\alpha(x^{-1}) \cdot \wt{e}_\alpha(-x) \cdot w_0^{-1}\big) \cdot  \psi^{-1}\big(\wt{e}_\alpha(x^{-1})\big) \mu(x) \in i({}^{\w_\alpha} \wchi).
\end{align}
Note that the integrand of (\ref{Int}) takes values in $i(\wchi)$. However, on the one hand, as vector spaces of functions on $\wt{T}$, the underlying space $i(\wchi)$ is identical to that of ${}^{w_\alpha}i(\wchi)$ (cf. \cite{Gao}); on the other hand, it follows from the Stone von-Neumann theorem that ${}^{w_\alpha} i(\wchi) \simeq i({}^{\w_\alpha}\wchi)$ as representations of $\wt{T}$. Therefore, there is a canonical vector space isomorphism $i(\wchi)\simeq i({}^{\w_\alpha}\wchi)$. 
%This isomorphism is given by $f\mapsto \ell_{\w_\alpha}(f)$ where $\ell_{\w_\alpha}(f)(\wt{t})=f(w_\alpha^{-1} \cdot \wt{t}\cdot w_\alpha)$.
For the computation below, we will follow \cite{Mc2} closely and adopt this viewpoint implicitly.
\vskip 5pt

To ease notations, write $\pi=i(\wchi)$. Use the partition $F=\bigcup_{m\in \Z} \varpi^{-m} O_F^\times$ and write $x=\varpi^{-m} u^{-1}, u\in O_F^\times$. Then $\mu(x)=|\varpi|^{-m} \mu(u)$ and the integral in (\ref{Int}) is equal to
\begin{align*}
& \sum_{m\in \Z} |\varpi|^{-m} \int_{O_F^\times} f_{\gamma'} \big( \wt{h}_\alpha(\varpi^m \cdot u) \cdot \wt{e}_\alpha(-\varpi^{-m} u^{-1}) \cdot w_0^{-1}\big) \cdot  \psi^{-1}\big(\wt{e}_\alpha(\varpi^m \cdot u)\big) \mu(u) \\
=&  \sum_{m\in \Z} \int_{O_F^\times} (u, \varpi)_n^{m Q(\alpha^\vee)} \cdot \pi(\wt{h}_\alpha(\varpi^m)) \cdot \pi(\wt{h}_\alpha(u)) \cdot \pi(\gamma') \phi_0 \cdot \psi^{-1} (\varpi^m \cdot u) \mu(u) .
\end{align*}
Suppose $\gamma' = \s_y \in \wt{T}$ for some $y\in Y$. (We write $\s_y:=\s(\varpi^y) \in \wt{T}$ for $y\in Y$, see \S 2 for notations.) Then the above is equal to
\begin{equation} \label{Sum}
 \sum_{m\in \Z} \int_{O_F^\times} (u, \varpi)_n^{mQ(\alpha^\vee) + B(\alpha^\vee, y)} \cdot \pi(\wt{h}_\alpha(\varpi^m)) \cdot  \pi(\s_y) \phi_0 \cdot \psi^{-1} (\varpi^m \cdot u) \mu(u) .
\end{equation}
From now, we write $\Gamma(m, y, \alpha^\vee):=\vep^{(m+ \angb{y}{\alpha})D(y, \alpha^\vee)}$ and $\GGMA(y, \alpha^\vee):=\Gamma(-1, y, \alpha^\vee)$, which lie in $\set{\pm 1}$. It follows from the equality (\ref{F:W-act}) that $\wt{h}_\alpha(\varpi^m) \cdot \s_y=w_\alpha \cdot \left(\Gamma(m, y, \alpha^\vee) \cdot \s_{y+m\alpha^\vee} \right) \cdot w_\alpha^{-1}$.
Therefore (\ref{Sum}) is equal to
$$\sum_{m\in \Z} \Gamma(m, y, \alpha^\vee) \cdot {}^{w_\alpha}\pi \left(\s_{\w_\alpha(y+m\alpha^\vee)}\right) \phi_0 \cdot \int_{O_F^\times} (u, \varpi)_n^{mQ(\alpha^\vee) + B(\alpha^\vee, y)}  \psi^{-1} (\varpi^m \cdot u) \mu(u).
$$
There are three cases for each term in the sum:

$\bullet$ For $m\le -2$, the integral over $O_F^\times$ vanishes, and thus the contribution to $\tau(\wchi, \w_\alpha, \gamma, \gamma')$ is 0.

$\bullet$ For $m=-1$, the contribution $\tau(\wchi, \w_\alpha, \gamma, \gamma')$ is nonzero only when $\w_\alpha(y_1) \equiv y-\alpha^\vee \mod Y_{Q,n}$ where $\gamma=\s_{y_1}, \gamma'=\s_{y}$. When $\w_\alpha(y_1)= y-\alpha^\vee$, the contribution to $\tau(\wchi, \w_\alpha, \gamma, \gamma')$ is
$$\GGMA(y, \alpha^\vee) \cdot \g(B(\alpha^\vee, y) - Q(\alpha^\vee))=\GGMA(y, \alpha^\vee) \cdot \g(\angb{y_\rho}{\alpha}Q(\alpha^\vee)).$$

$\bullet$ For any $x\in \R$, recall that we denote by $\ceil{x}$ the minimum integer such that $\ceil{x}\ge x$. The sum for $m\ge 0$ is equal to
\begin{align*}
& \sum_{m\ge 0} \Gamma(m, y, \alpha^\vee) \cdot {}^{w_\alpha}\pi \left(\s_{\w_\alpha(y+m\alpha^\vee)}\right) \phi_0 \cdot \int_{O_F^\times} (u, \varpi)_n^{mQ(\alpha^\vee) + B(\alpha^\vee, y)}  \mu(u)\\
= & \sum_{\substack{m=k\cdot n_\alpha -B(\alpha^\vee, y)/Q(\alpha^\vee) \\ k\ge \ceil{B(\alpha^\vee, y)/n_\alpha Q(\alpha^\vee)} }} \Gamma(m, y, \alpha^\vee) \cdot \vep^{(m+\angb{y, \alpha}) D(\alpha^\vee, y)} \\
& \qquad \qquad \qquad \cdot {}^{w_\alpha}\pi \left(\s_{(-m-\angb{y}{\alpha})\alpha^\vee}\right) {}^{w_\alpha}\pi \left(\s_y\right) \phi_0 \cdot (1-q^{-1}) \\
=& (1-q^{-1}) \sum_{k\ge \ceil{\angb{y}{\alpha^\vee}/n_\alpha} } \vep^{kn_\alpha B(\alpha^\vee, y)} \cdot {}^{w_\alpha}\pi \left(\wt{h}_\alpha(\varpi^{-kn_\alpha})\right) \cdot  {}^{w_\alpha}\pi(\s_y) \phi_0 \\
= & (1-q^{-1}) \sum_{k\ge \ceil{\angb{y}{\alpha^\vee}/n_\alpha} }  \wchi (\wt{h}_\alpha(\varpi^{n_\alpha}))^k \cdot  {}^{w_\alpha}\pi (\s_y) \phi_0\\
= & (1-q^{-1}) \frac{\wchi (\wt{h}_\alpha(\varpi^{n_\alpha}))^{k_{y,\alpha}}}{1-\wchi (\wt{h}_\alpha(\varpi^{n_\alpha}))} \cdot  {}^{w_\alpha}\pi (\s_y) \phi_0, \text{ where } k_{y,\alpha}=\ceil{\angb{y}{\alpha}/n_\alpha}.
\end{align*}
The contribution is nonzero only for $\gamma=\s_{y_1}$ with $y_1 \equiv y\mod Y_{Q,n}$. In particular, if $y_1= y$, then the contribution to $\tau(\wchi, \w_\alpha, \gamma, \gamma')$ (for $\gamma=\gamma'=\s_y$) is
$$(1-q^{-1}) \frac{\wchi (\wt{h}_\alpha(\varpi^{n_\alpha}))^{k_{y,\alpha}}}{1-\wchi (\wt{h}_\alpha(\varpi^{n_\alpha}))}, \text{ where } k_{y,\alpha}=\ceil{\frac{\angb{y}{\alpha}}{n_\alpha}}.$$

\vskip 5pt

To summarize, we state the following theorem by McNamara which generalizes \cite[Lemma I.3.3]{KP}:
\begin{thm}[{\cite[Theorem 13.1]{Mc2}}]
Suppose that $\gamma=\s_{y_1}$ is represented by $y_1$ and $\gamma'=\s_y$ by $y$. Then we can write $\tau(\wchi, \w_\alpha, \gamma, \gamma')=\tau^1(\wchi, \w_\alpha, \gamma, \gamma') + \tau^2(\wchi, \w_\alpha, \gamma, \gamma')$ with the following properties:
\begin{enumerate}
\item[$\bullet$] $\tau^i(\wchi, \w_\alpha,\gamma \cdot \wt{z}, \gamma' \cdot \wt{z}')=({}^{\w_\alpha} \wchi)^{-1}(\wt{z}) \cdot \tau^i(\wchi, \w_\alpha, \gamma, \gamma') \cdot \wchi(\wt{z}'), \quad \wt{z}, \wt{z}'\in \wt{A}$;
\item[$\bullet$] $\tau^1(\wchi, \w_\alpha, \gamma, \gamma')=0$  unless  $y_1 \equiv y \mod Y_{Q,n}$;
\item[$\bullet$] $\tau^2(\wchi, \w_\alpha, \gamma, \gamma')=0$   unless $y_1 \equiv \w_\alpha[y] \mod Y_{Q,n}$.
\end{enumerate}
Moreover, 
\begin{enumerate}
\item[$\bullet$] If $y_1= y$, then 
$$\tau^1(\wchi, \w_\alpha, \gamma, \gamma')=(1-q^{-1}) \frac{\wchi (\wt{h}_\alpha(\varpi^{n_\alpha}))^{k_{y,\alpha}}}{1-\wchi (\wt{h}_\alpha(\varpi^{n_\alpha}))}, \text{ where } k_{y,\alpha}=\ceil{\frac{\angb{y}{\alpha}}{n_\alpha}}.$$
\item[$\bullet$] If $y_1=\w_\alpha[y]$, then
$$\tau^2(\wchi, \w_\alpha, \gamma, \gamma') = \GGMA(y, \alpha^\vee) \cdot \g(\angb{y_\rho}{\alpha}Q(\alpha^\vee)).$$
\end{enumerate}
\end{thm}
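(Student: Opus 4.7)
The proof plan is to formalize the explicit three-case computation carried out in the discussion immediately preceding the statement, packaging each surviving piece as $\tau^1$ or $\tau^2$. Starting from the integral in (\ref{Int}) with $\gamma'=\s_y$ and $\gamma=\s_{y_1}$, I partition $F=\bigsqcup_{m\in \Z}\varpi^{-m}O_F^\times$, apply the commutation identity (\ref{F:W-act}) to rewrite $\wt{h}_\alpha(\varpi^m)\s_y$ inside $\pi=i(\wchi)$ as a Weyl conjugate of $\Gamma(m,y,\alpha^\vee)\s_{y+m\alpha^\vee}$, and evaluate the resulting inner integral over $O_F^\times$ as a Gauss sum $G_\psi(mQ(\alpha^\vee)+B(\alpha^\vee,y),m)$. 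The decomposition $\tau=\tau^1+\tau^2$ then corresponds to partitioning the sum over $m$ into $m\ge 0$ (giving $\tau^1$), $m=-1$ (giving $\tau^2$), and $m\le -2$ (vanishing because $G_\psi(\cdot,m)=0$ for $m\le -2$).

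For $m=-1$, the Gauss sum evaluates to $\g(\angb{y_\rho}{\alpha}Q(\alpha^\vee))$ using $B_Q(\alpha^\vee,y)=\angb{y}{\alpha}Q(\alpha^\vee)$ and $\angb{\rho}{\alpha}=1$; the prefactor $\Gamma(-1,y,\alpha^\vee)$ is by definition $\GGMA(y,\alpha^\vee)$. The pairing condition that the resulting vector be detected by $\lambda_\gamma^{{}^{\w_\alpha}\wchi}$ reads $\w_\alpha(y_1)\equiv y-\alpha^\vee\bmod Y_{Q,n}$, which rewrites as $y_1\equiv \w_\alpha[y]\bmod Y_{Q,n}$ via the simple-coroot identity $\rho-\w_\alpha(\rho)=\alpha^\vee$, and when $y_1=\w_\alpha[y]$ yields the stated formula for $\tau^2$. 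For $m\ge 0$ the inner integral equals $1-q^{-1}$ exactly when $n\mid mQ(\alpha^\vee)+B(\alpha^\vee,y)$, restricting $m$ to $kn_\alpha-B(\alpha^\vee,y)/Q(\alpha^\vee)$ with $k\ge k_{y,\alpha}=\ceil{\angb{y}{\alpha}/n_\alpha}$; summing the geometric series in $\wchi(\wt{h}_\alpha(\varpi^{n_\alpha}))^k$ from $k=k_{y,\alpha}$ to infinity yields the claimed formula for $\tau^1$, which is supported on $y_1\equiv y\bmod Y_{Q,n}$. The $\wt{A}$-equivariance property for each $\tau^i$ is then inherited from the change-of-basis argument of Lemma \ref{L:CBF} applied separately to each portion of the integration domain, since that argument operates term-by-term in $m$.

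The main technical subtlety I expect is the implicit vector-space identification $i(\wchi)\simeq i({}^{\w_\alpha}\wchi)$ required to interpret $\lambda_\gamma^{{}^{\w_\alpha}\wchi}$ applied to an integrand that a priori takes values in $i(\wchi)$, together with the compatibility of this identification with the chosen unramified vector $\phi_0$ and with $\pi(\wt{h}_\alpha(\varpi^{n_\alpha}))$ acting as multiplication by the Satake parameter $\wchi(\wt{h}_\alpha(\varpi^{n_\alpha}))$ that appears in $\tau^1$. I would appeal to the Stone--von Neumann setup of \S\ref{Sec:Theta} and the formalism of \cite{Gao} to justify this step rather than reprove it from scratch. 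Beyond that, the remaining work is bookkeeping: rewriting exponents of $\vep$ arising from \eqref{F:s} and \eqref{F:W-act} into the compact form $\GGMA(y,\alpha^\vee)$, and verifying that the arithmetic progression of allowable $m\ge 0$ reindexes cleanly into nonnegative powers of $\wchi(\wt{h}_\alpha(\varpi^{n_\alpha}))$ starting at $k_{y,\alpha}$.
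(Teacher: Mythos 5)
Your proposal reconstructs the same computation the paper carries out in the paragraphs immediately preceding the theorem statement: the partition $F=\bigsqcup_m\varpi^{-m}O_F^\times$, the use of \eqref{F:W-act} to move $\wt{h}_\alpha(\varpi^m)$ past $\s_y$, the Gauss-sum evaluation split into the cases $m\le -2$, $m=-1$, $m\ge 0$, and the appeal to the Stone--von Neumann identification $i(\wchi)\simeq i({}^{\w_\alpha}\wchi)$ to interpret the pairing. This matches the paper's approach, and your identification of the canonical vector-space isomorphism as the one genuine subtlety is exactly the point the paper flags and handles by citation as well.
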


As an analogue of \cite[Corollary I.3.4]{KP}, we have the following result.
\begin{cor} \label{C:iff-2}
Let $\wchi$ be an unramified exceptional character. Let $\lambda_\cc^{\wchi} \in \Wh(I(\wchi))$ be the $\psi$-Whittaker functional of $I(\wchi)$ associated to some $\cc \in \Ftn(i(\wchi))$. Then, $\lambda_\cc^{\wchi}$ lies in $\Wh(\Theta(\wt{G}, \wchi))$ if and only if for any simple root $\alpha\in \Delta$ one has
\begin{equation} \label{C:crucial}
\cc\big( \s_{\w_\alpha[y]}) \big) =q^{k_{y,\alpha} -1} \cdot \GGMA(y, \alpha^\vee) \cdot \g(\angb{y_\rho}{\alpha}Q(\alpha^\vee))^{-1} \cdot \cc(\s_y) \text{ for all } y.
\end{equation}
\end{cor}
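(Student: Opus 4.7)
The plan is to combine Corollary \ref{C:iff-1} with the explicit formula from Theorem 3.5 (the McNamara decomposition $\tau=\tau^1+\tau^2$), specializing in the end to the exceptional character condition $\wchi(\wt{h}_\alpha(\varpi^{n_\alpha}))=q^{-1}$. By Corollary \ref{C:iff-1}, the functional $\lambda_\cc^\wchi$ lies in $\Wh(\Theta(\wt{G},\wchi))$ if and only if for every simple root $\alpha\in\Delta$ and every representative $\gamma'=\s_y$ one has
$$\sum_{\gamma\in \wt{T}/\wt{A}} \cc(\gamma)\,\tau({}^{\w_\alpha}\wchi,\w_\alpha,\gamma,\s_y)=0.$$
The vanishing conditions in Theorem 3.5 force this sum to reduce to at most two terms: a $\tau^1$-contribution from $\gamma=\s_y$ and a $\tau^2$-contribution from $\gamma=\s_{\w_\alpha[y]}$. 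So the condition becomes a two-term linear relation between $\cc(\s_y)$ and $\cc(\s_{\w_\alpha[y]})$, and solving it will give exactly (\ref{C:crucial}).

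The next step is to evaluate the two entries when $\wchi$ is exceptional. Since $n_\alpha\alpha^\vee=\alpha_{Q,n}^\vee\in Y_{Q,n}^{\sct}\subseteq Y_{Q,n}$, the element $\wt{h}_\alpha(\varpi^{n_\alpha})$ is central in $\wt{T}$, so $W$ acts on it through its action on $Y_{Q,n}$, under which $\w_\alpha$ sends $\alpha_{Q,n}^\vee$ to $-\alpha_{Q,n}^\vee$. Tracking this through the cocycle (\ref{F:s}) and conjugation formula (\ref{F:W-act}), together with centrality, yields
$${}^{\w_\alpha}\wchi\bigl(\wt{h}_\alpha(\varpi^{n_\alpha})\bigr)=\wchi\bigl(\wt{h}_\alpha(\varpi^{n_\alpha})\bigr)^{-1}=q.$$
Substituting into the $\tau^1$-formula of Theorem 3.5 gives
$$\tau^1({}^{\w_\alpha}\wchi,\w_\alpha,\s_y,\s_y)=(1-q^{-1})\cdot\frac{q^{k_{y,\alpha}}}{1-q}=-q^{k_{y,\alpha}-1},$$
while $\tau^2({}^{\w_\alpha}\wchi,\w_\alpha,\s_{\w_\alpha[y]},\s_y)=\GGMA(y,\alpha^\vee)\cdot\g(\angb{y_\rho}{\alpha}Q(\alpha^\vee))$ is unchanged, since the $\tau^2$-formula does not depend on $\wchi$.

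Plugging these two values into the vanishing relation and rearranging, together with the observation that $\GGMA(y,\alpha^\vee)\in\{\pm 1\}$ equals its own inverse, reproduces formula (\ref{C:crucial}) term for term. Conversely, any $\cc\in\Ftn(i(\wchi))$ satisfying (\ref{C:crucial}) for every $\alpha\in\Delta$ automatically kills each $T^*_{\w_\alpha,{}^{\w_\alpha}\wchi}$, so by Proposition \ref{P:inter} the corresponding $\lambda_\cc^\wchi$ descends to a Whittaker functional on the image $\Theta(\wt{G},\wchi)$.

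The only genuinely nontrivial step is the identity ${}^{\w_\alpha}\wchi(\wt{h}_\alpha(\varpi^{n_\alpha}))=q$. Heuristically it is immediate from ``Satake parameter inversion at a reflection'', but rigorously one must verify that the Hilbert-symbol corrections appearing from (\ref{F:h-s}) and (\ref{F:s}) when conjugating by $w_\alpha$ cancel; this cancellation is exactly what centrality of $\wt{h}_\alpha(\varpi^{n_\alpha})$ provides. Everything else is bookkeeping of scalars coming out of Theorem 3.5.
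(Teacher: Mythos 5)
Your proposal follows exactly the paper's route: apply Corollary~\ref{C:iff-1} and the vanishing pattern of McNamara's $\tau=\tau^1+\tau^2$ decomposition to reduce the sum to a two-term relation between $\cc(\s_y)$ and $\cc(\s_{\w_\alpha[y]})$, then substitute the formulas for $\tau^1$ and $\tau^2$ evaluated at the character ${}^{\w_\alpha}\wchi$ and simplify using $\wchi(\wt{h}_\alpha(\varpi^{n_\alpha}))=q^{-1}$. The identity ${}^{\w_\alpha}\wchi(\wt{h}_\alpha(\varpi^{n_\alpha}))=\wchi(\wt{h}_\alpha(\varpi^{n_\alpha}))^{-1}$ that you flag as the one nontrivial point is also used (silently) in the paper's proof, so your emphasis on it is appropriate rather than a deviation.
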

\begin{proof}
By Corollary \ref{C:iff-1}, for all $\alpha\in \Delta$, we have
$$\cc (\s_y) \cdot \tau({}^{\w_\alpha}\wchi, \w_\alpha, \s_y, \s_y) +\cc( \s_{\w_\alpha[y]}) \cdot \tau({}^{\w_\alpha}\wchi, \w_\alpha, \s_{\w_\alpha[y]}, \s_y)=0,$$
where $y\in Y$ is any element. The preceding theorem gives
\begin{align*}
&\cc(\s_{\w_\alpha[y]})\\
=& -(1-q^{-1})\frac{(\wchi(\wt{h}_\alpha(\varpi^{n_\alpha})))^{-k_{y,\alpha}}}{1-\wchi(\wt{h}_\alpha(\varpi^{n_\alpha}))^{-1}} \cdot \GGMA(y, \alpha^\vee) \cdot \g(\angb{y_\rho}{\alpha}Q(\alpha^\vee))^{-1} \cdot \cc(\s_y) \\
=& q^{k_{y,\alpha} -1} \cdot \GGMA(y, \alpha^\vee) \cdot \g(\angb{y_\rho}{\alpha}Q(\alpha^\vee))^{-1} \cdot \cc(\s_y).
\end{align*}
The proof is completed.
\end{proof}

From now on, for $y\in Y$ and $\alpha\in \Delta$, we write
\begin{equation} \label{abb:t-y}
\mathbf{t}(\w_\alpha, y):= q^{k_{y,\alpha} -1} \cdot \GGMA(y, \alpha^\vee) \cdot \g(\angb{y_\rho}{\alpha}Q(\alpha^\vee))^{-1}
\end{equation}
where
$$
k_{y,\alpha}=\ceil{\frac{\angb{y_\rho}{\alpha}+1}{n_\alpha}} \text{ and } \GGMA(y, \alpha^\vee)=\vep^{\angb{y_\rho}{\alpha}\cdot D(y, \alpha^\vee)}.
$$
It is clear $\mathbf{t}(\w_\alpha, y) \ne 0$.

\begin{dfn}
For $\cc \in \Ftn(i(\wchi))$, we say that $\cc$ vanishes on $y\in Y$ if and only if $\cc(\s_y)=0$. It is said to vanish on the orbit $\mca{O}_{y_0} \subset Y$ if and only if it vanishes on all $y \in \mca{O}_{y_0}$, in which case we write $\cc(\mca{O}_{y_0})=0$.
\end{dfn}

Assume that $\cc$ gives rise to $\lambda_{\cc}^{\wchi} \in \Wh(\Theta(\wt{G}, \wchi))$. Since $\mathbf{t}(\w_\alpha, y)\ne 0$ for all $y$ and $\alpha\in \Delta$, it follows from Corollary \ref{C:iff-2} that $\cc$ vanishes on $\mca{O}_{y_0}$ if and only if it vanishes on any $y\in \mca{O}_{y_0}$. It is therefore easy to see that
\begin{equation} \label{dWh}
\dim \Wh(\Theta(\wt{G}, \wchi))=
\left|\left\{
\begin{array}{cc}
\wp_{Q,n}(\mca{O}_{y_0}): \\
\bullet \ \ \mca{O}_{y_0} \in \mca{O} \text{ is a $W$-orbit in $Y$,}\\
\bullet \ \ \text{ there exists } \cc\in \Ftn(i(\wchi)) \\
\quad \text{ satisfying (\ref{C:crucial}) for all } \alpha \in \Delta, y\in \mca{O}_{y_0}\\
\quad \text{ and also } \cc(\mca{O}_{y_0})\ne 0
\end{array}\right\} \right|.
\end{equation}
In the remaining part of this section we will prove an effective lower and upper bound for $\dim \Wh(\Theta(\wt{G}, \wchi))$.

%%%%%%%%
\subsection{A lower bound for $\dim \Wh(\Theta(\wt{G}, \wchi))$}
The Weyl group $W$ of $\mbf{G}$ has the presentation
$$W=\left\langle \w_\alpha: (\w_\alpha \w_\beta)^{m_{\alpha \beta}} =1 \text{ for } \alpha, \beta\in \Delta\right\rangle.$$

\begin{lm} \label{L:order2}
Let $\mca{O}_y \in \OF_{Q,n,\sct}$ be a $Y_{Q,n}^{\sct}$-free orbit in $Y$. Then the following holds:
$$\mathbf{t}(\w_\alpha, \w_\alpha[y]) \cdot \mathbf{t}(\w_\alpha, y)=1\text{ for all }\alpha\in \Delta.$$
\end{lm}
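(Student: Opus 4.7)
The plan is to compute $\mathbf{t}(\w_\alpha, \w_\alpha[y]) \cdot \mathbf{t}(\w_\alpha, y)$ factor by factor, using the $Y_{Q,n}^{\sct}$-freeness of the orbit at the crucial step. Set $y' := \w_\alpha[y]$ and $m := \angb{y_\rho}{\alpha}$. Then $y' = y - m\alpha^\vee$ and $y'_\rho = y_\rho - m\alpha^\vee$, so $\angb{y'_\rho}{\alpha} = -m$. The freeness of $\mca{O}_y$ forces $m \ne 0$ (otherwise $\w_\alpha$ stabilizes $y$), and $Y_{Q,n}^{\sct}$-freeness combined with the inclusion $n_\alpha \alpha^\vee = \alpha_{Q,n}^\vee \in Y_{Q,n}^{\sct}$ implies that $m\alpha^\vee = y - y' \notin Y_{Q,n}^{\sct}$, whence $n_\alpha \nmid m$. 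This is the step that uses the hypothesis, and is the small conceptual input of the proof.

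Next I would separately evaluate the three ingredients making up $\mathbf{t}$. For the power of $q$: writing $m = b n_\alpha + r$ with $1 \le r \le n_\alpha - 1$, an elementary ceiling computation yields $k_{y,\alpha} = b+1$ and $k_{y',\alpha} = -b$, so $k_{y,\alpha} + k_{y',\alpha} - 2 = -1$. For the Gauss sums: since $n_\alpha \nmid m$ forces $n \nmid mQ(\alpha^\vee)$, the complex conjugation formula $\overline{G_\psi(a,-1)} = \vep^a G_\psi(-a,-1)$ and $|\g(mQ(\alpha^\vee))|^2 = 1/q$ together give
\[
\g(mQ(\alpha^\vee))^{-1} \cdot \g(-mQ(\alpha^\vee))^{-1} = q \cdot \vep^{mQ(\alpha^\vee)}.
\]
For the $\GGMA$ factors: using $\angb{y'_\rho}{\alpha} = -m$ and bilinearity of $D$,
\[
\GGMA(y,\alpha^\vee)\, \GGMA(y',\alpha^\vee) = \vep^{m\, D(y - y', \alpha^\vee)} = \vep^{m^2 D(\alpha^\vee, \alpha^\vee)} = \vep^{m^2 Q(\alpha^\vee)},
\]
where the last equality uses $2D(\alpha^\vee,\alpha^\vee) = B_Q(\alpha^\vee,\alpha^\vee) = 2Q(\alpha^\vee)$.

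Assembling these three pieces,
\[
\mathbf{t}(\w_\alpha, y') \cdot \mathbf{t}(\w_\alpha, y) = q^{-1} \cdot q \vep^{mQ(\alpha^\vee)} \cdot \vep^{m^2 Q(\alpha^\vee)} = \vep^{m(m+1) Q(\alpha^\vee)}.
\]
Since $\vep^2 = 1$ and $m(m+1)$ is even, the right-hand side equals $1$, which is the claim. The main obstacle to watch out for is the ceiling bookkeeping (one must exclude $n_\alpha \mid m$, which is exactly where freeness enters); everything else is a direct manipulation of the Gauss sum identities and the bisector $D$.
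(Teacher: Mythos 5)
Your proof is correct and takes essentially the same route as the paper: isolate the three factors, use $Y_{Q,n}^{\sct}$-freeness to get $n_\alpha\nmid \angb{y_\rho}{\alpha}$ (which makes the ceiling sum give $q^{-1}$), and combine the Gauss-sum identity with the bilinearity of $D$ to collapse the remaining product to a power of $\vep$ with even exponent. The only cosmetic differences are that you parametrize by $m=\angb{y_\rho}{\alpha}$ and write out $m=bn_\alpha+r$ explicitly, and that you use $D(y-y',\alpha^\vee)=m^2 Q(\alpha^\vee)$ directly instead of expanding $D(\w_\alpha[y],\alpha^\vee)$ as the paper does.
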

\begin{proof}
Note that $\w_\alpha[y]=\w_\alpha(y)+\alpha^\vee=y+(1-\angb{y}{\alpha})\alpha^\vee$. It follows that $\angb{\w_\alpha[y]}{\alpha}=2-\angb{y}{\alpha}$. Therefore
\begin{align*}
& \mathbf{t}(\w_\alpha, \w_\alpha[y]) \\
=& q^{\ceil{\angb{\w_\alpha[y]}{\alpha}/n_\alpha} -1} \cdot \GGMA( \w_\alpha[y], \alpha^\vee) \cdot \g(Q(\alpha^\vee)(\angb{\w_\alpha[y]}{\alpha}-1))^{-1} \\
=&  q^{\ceil{(2-\angb{y}{\alpha})/n_\alpha} -1} \cdot \vep^{\angb{y_\rho}{\alpha} \cdot \big(D(y, \alpha^\vee) - \angb{y_\rho}{\alpha^\vee}Q(\alpha^\vee) \big)} \cdot \g(-Q(\alpha^\vee)\angb{y_\rho}{\alpha})^{-1}
\end{align*}
and
\begin{align*}
& \mathbf{t}(\w_\alpha, \w_\alpha[y])  \cdot \mathbf{t}(\w_\alpha, y) \\
=& q^{\ceil{(2-\angb{y}{\alpha})/n_\alpha} + \ceil{\angb{y}{\alpha}/n_\alpha} -2} \cdot \vep^{\angb{y_\rho}{\alpha}^2\cdot Q(\alpha^\vee)} \\
& \qquad \qquad \cdot  \g(Q(\alpha^\vee)\cdot \angb{y_\rho}{\alpha})^{-1} \cdot \g(-Q(\alpha^\vee)\cdot \angb{y_\rho}{\alpha})^{-1}.
\end{align*}
However, it follows from $\g(k)=\vep^k \cdot \overline{\g(-k)}$ that $|\g(k)|=q^{-1/2}$. Moreover, since $\mca{O}_y$ is a $Y_{Q,n}^{\sct}$-free orbit, $\w_\alpha[y] - y \notin Y_{Q,n}^{\sct}$. Therefore, $n_\alpha\nmid (1-\angb{y}{\alpha})$ and thus
$$\ceil{\frac{2-\angb{y}{\alpha}}{n_\alpha}} + \ceil{\frac{\angb{y}{\alpha}}{n_\alpha}}=1.$$
Now it can be checked easily that $\mathbf{t}(\w_\alpha, \w_\alpha[y])  \cdot \mathbf{t}(\w_\alpha, y)=1$.
\end{proof}

Consider adjacent $\alpha, \beta\in \Delta$ from the Dynkin diagram. We would like to show that for $Y_{Q,n}^{\sct}$-free orbit $\mca{O}_y$ the equality $\prod_{i=1}^{ m_{\alpha \beta} } \mathbf{t}(\w_\alpha \w_\beta, (\w_\alpha \w_\beta)^i[y])=1$ holds, where $\mathbf{t}(\w_\alpha \w_\beta, y):=\mathbf{t}(\w_\alpha,  \w_\beta[y]) \cdot\mathbf{t}(\w_\beta, y)$. This will follow from a case by case discussion. We will give the details for $m_{\alpha \beta}=3, 4$ below and leave the case for $m_{\alpha \beta}=6$ to the reader.
\vskip 5pt

%%%%%%%%%%%
\cu{Case $m_{\alpha \beta}=3$}. The relation $(\w_\alpha \w_\beta)^{m_{\alpha \beta}}=1$ is equivalent to $\w_\alpha \w_\beta \w_\alpha=\w_\beta \w_\alpha \w_\beta$. By Lemma \ref{L:order2}, it suffices to show
\begin{equation} \label{M3}
\mathbf{t}(\w_\alpha, \w_\beta \w_\alpha[y])\cdot \mathbf{t}(\w_\beta, \w_\alpha[y])\cdot \mathbf{t}(\w_\alpha, y)=
\mathbf{t}(\w_\beta, \w_\alpha \w_\beta[y])\cdot \mathbf{t}(\w_\alpha, \w_\beta[y])\cdot \mathbf{t}(\w_\beta, y).
\end{equation}
We first note that
$$\mathbf{t}(\w_\alpha, y)= q^{\ceil{\frac{\angb{y_\rho}{\alpha}+1}{n_\alpha}} -1} \cdot \vep^{\angb{y_\rho}{\alpha}\cdot D(y, \alpha^\vee)} \cdot \g(B_Q(y_\rho, \alpha^\vee))^{-1}.$$
We also have $\angb{\w_\beta \w_\alpha(y_\rho)}{\alpha}=\angb{y_\rho}{\beta}$ since the pairing $\angb{-}{-}$ is $W$-equivariant and $\w_\alpha \w_\beta(\alpha)=\beta$. Similarly, $\angb{\w_\alpha \w_\beta(y_\rho)}{\beta}=\angb{y_\rho}{\alpha}$.
A simple computation gives
$$\begin{cases}
\mathbf{t}(\w_\alpha, y) = q^{\ceil{\frac{\angb{y_\rho}{\alpha}+1}{n_\alpha}}-1} \cdot \vep^{\angb{y_\rho}{\alpha}D(y, \alpha^\vee)} \cdot \g(\angb{y_\rho}{\alpha}Q(\alpha^\vee))^{-1} ,\\
\mathbf{t}(\w_\beta, \w_\alpha[y]) = q^{\ceil{\frac{\angb{y_\rho}{\alpha +\beta}+1}{n_\beta}}-1} \cdot \vep^{\angb{y_\rho}{\alpha+\beta}D(\w_\alpha[y], \beta^\vee)} \cdot \g(\angb{y_\rho}{\alpha+\beta}Q(\beta^\vee))^{-1}, \\
\mathbf{t}(\w_\alpha, \w_\beta \w_\alpha[y]) = q^{\ceil{\frac{\angb{y_\rho}{\beta}+1}{n_\alpha}}-1} \cdot \vep^{\angb{y_\rho}{\beta}D(\w_\beta \w_\alpha[y], \alpha^\vee)} \cdot \g(\angb{y_\rho}{\beta}Q(\alpha^\vee))^{-1} .
\end{cases} $$
Meanwhile,
$$\begin{cases}
\mathbf{t}(\w_\beta, y) = q^{\ceil{\frac{\angb{y_\rho}{\beta}+1}{n_\beta}}-1} \cdot \vep^{\angb{y_\rho}{\beta}D(y, \beta^\vee)} \cdot \g(\angb{y_\rho}{\beta}Q(\beta^\vee))^{-1} , \\
\mathbf{t}(\w_\alpha, \w_\beta[y]) = q^{\ceil{\frac{\angb{y_\rho}{\alpha +\beta}+1}{n_\alpha}}-1} \cdot \vep^{\angb{y_\rho}{\alpha+\beta}D(\w_\beta[y], \alpha^\vee)} \cdot \g(\angb{y_\rho}{\alpha+\beta}Q(\alpha^\vee))^{-1} , \\
\mathbf{t}(\w_\beta, \w_\alpha \w_\beta \w_\alpha[y]) = q^{\ceil{\frac{\angb{y_\rho}{\alpha}+1}{n_\beta}}-1} \cdot \vep^{\angb{y_\rho}{\alpha}D(\w_\alpha \w_\beta[y], \beta^\vee)} \cdot \g(\angb{y_\rho}{\alpha}Q(\beta^\vee))^{-1} .
\end{cases} $$
Since $Q(\alpha^\vee)=Q(\beta^\vee)$ and thus $n_\alpha=n_\beta$, to show the equality (\ref{M3}), it suffices to check that the powers of $\vep$ on the two sides of (\ref{M3}) are equal. However, a straightforward computation shows that this is indeed the case, and we may omit the details.

%%%%%%%%%%%%%%%%%%%%%%
\vskip 5pt
\cu{Case $m_{\alpha \beta}=4$}. Let $\alpha, \beta \in \Delta$ be two adjacent roots such that $m_{\alpha \beta}=4$. We assume that $\alpha$ is the longer one. Therefore, $\angb{\alpha^\vee}{\beta}=-1, \angb{\beta^\vee}{\alpha}=-2$, and also $Q(\beta^\vee)=2Q(\alpha^\vee)$. As in the preceding case, we want to show
\begin{equation}\label{M4}
\begin{aligned}
& \mathbf{t}(\w_\beta, \w_\alpha \w_\beta w_\alpha[y]) \cdot \mathbf{t}(\w_\alpha, \w_\beta \w_\alpha[y])\cdot \mathbf{t}(\w_\beta, \w_\alpha[y])\cdot \mathbf{t}(\w_\alpha, y)\\
=&\mathbf{t}(\w_\alpha, \w_\beta \w_\alpha \w_\beta[y]) \cdot \mathbf{t}(\w_\beta, \w_\alpha \w_\beta[y])\cdot \mathbf{t}(\w_\alpha, \w_\beta[y])\cdot \mathbf{t}(\w_\beta, y).
\end{aligned}
\end{equation}
A simple computation yields
$$\begin{cases}
\mathbf{t}(\w_\alpha, y) = q^{\ceil{\frac{\angb{y_\rho}{\alpha}+1}{n_\alpha}}-1} \cdot \vep^{\angb{y_\rho}{\alpha}D(y, \alpha^\vee)} \cdot \g(\angb{y_\rho}{\alpha}Q(\alpha^\vee))^{-1} , \\
\mathbf{t}(\w_\beta, \w_\alpha[y]) = q^{\ceil{\frac{\angb{y_\rho}{\alpha +\beta}+1}{n_\beta}}-1} \cdot \vep^{\angb{y_\rho}{\alpha+\beta}D(\w_\alpha[y], \beta^\vee)} \cdot \g(\angb{y_\rho}{\alpha+\beta}Q(\beta^\vee))^{-1} ,\\
\mathbf{t}(\w_\alpha, \w_\beta \w_\alpha[y]) = q^{\ceil{\frac{\angb{y_\rho}{\alpha +2\beta}+1}{n_\alpha}}-1} \cdot \vep^{\angb{y_\rho}{\alpha+2\beta}D(\w_\beta \w_\alpha[y], \alpha^\vee)} \cdot \g(\angb{y_\rho}{\alpha+2\beta}Q(\alpha^\vee))^{-1} , \\
\mathbf{t}(\w_\beta, \w_\alpha \w_\beta \w_\alpha[y])=q^{\ceil{\frac{\angb{y_\rho}{\beta}+1}{n_\beta}}-1} \cdot \vep^{\angb{y_\rho}{\beta}D(\w_\alpha \w_\beta \w_\alpha[y], \beta^\vee)} \cdot \g(\angb{y_\rho}{\beta}Q(\beta^\vee))^{-1}.
\end{cases} $$
On the other hand, for the right hand side of (\ref{M4}), one has
$$\begin{cases}
\mathbf{t}(\w_\beta, y) = q^{\ceil{\frac{\angb{y_\rho}{\beta}+1}{n_\beta}}-1} \cdot \vep^{\angb{y_\rho}{\beta}D(y, \beta^\vee)} \cdot \g(\angb{y_\rho}{\beta}Q(\beta^\vee))^{-1} , \\
\mathbf{t}(\w_\alpha, \w_\beta[y]) = q^{\ceil{\frac{\angb{y_\rho}{\alpha +2\beta}+1}{n_\alpha}}-1} \cdot \vep^{\angb{y_\rho}{\alpha+2\beta}D(\w_\beta[y], \alpha^\vee)} \cdot \g(\angb{y_\rho}{\alpha+2\beta}Q(\alpha^\vee))^{-1} , \\
\mathbf{t}(\w_\beta, \w_\alpha \w_\beta \w_\alpha[y]) = q^{\ceil{\frac{\angb{y_\rho}{\alpha +\beta}+1}{n_\beta}}-1} \cdot \vep^{\angb{y_\rho}{\alpha+\beta}D(\w_\alpha \w_\beta[y], \beta^\vee)} \cdot \g(\angb{y_\rho}{\alpha+\beta}Q(\beta^\vee))^{-1} , \\
\mathbf{t}(\w_\alpha, \w_\beta \w_\alpha \w_\beta[y])=q^{\ceil{\frac{\angb{y_\rho}{\alpha}+1}{n_\alpha}}-1} \vep^{\angb{y_\rho}{\alpha}D(\w_\beta \w_\alpha \w_\beta[y], \alpha^\vee)} \cdot \g(\angb{y_\rho}{\alpha}Q(\alpha^\vee))^{-1}.
\end{cases} $$
To show equality (\ref{M4}), again it suffices to show that the powers of $\vep$ of the two sides have the same parities, which is achieved from a straightforward check.
\vskip 5pt

Analogous argument for $m_{\alpha \beta}=6$ works, and we give a summary.
\begin{prop} \label{P:adj-roots}
Let $\mca{O}_y$ be a $Y_{Q,n}^{\sct}$-free orbit. For all adjacent $\alpha, \beta \in \Delta$, one has
$$\prod_{i=1}^{ m_{\alpha \beta} } \mathbf{t}(\w_\alpha \w_\beta, (\w_\alpha \w_\beta)^i[y])=1,$$
where $\mathbf{t}(\w_\alpha \w_\beta, y):=\mathbf{t}(\w_\alpha,  \w_\beta[y]) \cdot\mathbf{t}(\w_\beta, y)$.
\end{prop}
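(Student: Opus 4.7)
My plan is to recognize the proposition as a consolidated restatement of the braid-relation identities already verified above for the rank-two subsystem generated by $\w_\alpha$ and $\w_\beta$, and to reduce it in each of the cases $m_{\alpha\beta}\in\{2,3,4,6\}$ to what is already established, completing the remaining $\TG_2$ case by the same method.

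First, I would expand
\begin{equation*}
P(y) := \prod_{i=1}^{m_{\alpha\beta}} \mathbf{t}(\w_\alpha\w_\beta, (\w_\alpha\w_\beta)^i[y])
\end{equation*}
using the defining relation $\mathbf{t}(\w_\alpha\w_\beta, y) = \mathbf{t}(\w_\alpha, \w_\beta[y])\cdot\mathbf{t}(\w_\beta, y)$ together with the cyclicity $(\w_\alpha\w_\beta)^{m_{\alpha\beta}}[y]=y$. After reindexing, $P(y)$ regroups into a comparison of the two length-$m_{\alpha\beta}$ products along the reduced words $\underbrace{\w_\alpha\w_\beta\w_\alpha\cdots}_{m_{\alpha\beta}}$ and $\underbrace{\w_\beta\w_\alpha\w_\beta\cdots}_{m_{\alpha\beta}}$ for the longest element of the rank-two Weyl subgroup $\langle\w_\alpha,\w_\beta\rangle$. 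Using Lemma \ref{L:order2} to invert one of the two blocks, the statement $P(y)=1$ becomes exactly the braid-relation identity between these two reduced words.

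For $m_{\alpha\beta}=2$, the braid identity is vacuous since $\w_\alpha$ and $\w_\beta$ commute and act on orthogonal roots; $P(y)=1$ follows from two applications of Lemma \ref{L:order2}. For $m_{\alpha\beta}=3$ and $4$, the braid identity is precisely (\ref{M3}) and (\ref{M4}), both already checked above. For the $\TG_2$ case $m_{\alpha\beta}=6$, I would write out all twelve $\mathbf{t}$-factors using (\ref{abb:t-y}), expressing each in terms of $\angb{y_\rho}{\gamma}$ for the six positive roots $\gamma$ of the rank-two subsystem, and then verify the three kinds of contributions — powers of $q$, Gauss sums $\g(\cdot)$, and $\vep$-signs — separately. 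The first two match tautologically because they depend only on the unordered multiset of pairings $\angb{y_\rho}{\gamma}$ for positive roots $\gamma$, which is stable under $\langle\w_\alpha,\w_\beta\rangle$.

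The main obstacle, as already visible in the $m_{\alpha\beta}=4$ verification, is tracking the $\vep$-exponents, since they involve the asymmetric bisector $D$ and must be shown to have the same parity on both sides of the braid relation. This reduces to a bookkeeping check using $D(y_1,y_2)+D(y_2,y_1)=B_Q(y_1,y_2)$; crucially, the $Y_{Q,n}^{\sct}$-freeness hypothesis on $\mca{O}_y$ forces $n_\gamma\nmid(1-\angb{y}{\gamma})$ for every positive root $\gamma$ of $\langle\alpha,\beta\rangle$, which triggers the ceiling identity $\ceil{(2-\angb{y}{\gamma})/n_\gamma}+\ceil{\angb{y}{\gamma}/n_\gamma}=1$ used in the proof of Lemma \ref{L:order2}, and the Gauss-sum absolute-value collapse $|\g(k)|^2=q^{-1}$. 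The $m_{\alpha\beta}=6$ verification is notationally longer than that for $m_{\alpha\beta}=4$ but is strictly parallel.
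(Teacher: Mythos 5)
Your proposal takes essentially the same route as the paper: reduce $\prod_{i=1}^{m_{\alpha\beta}}\mathbf{t}(\w_\alpha\w_\beta,(\w_\alpha\w_\beta)^i[y])=1$ via repeated applications of Lemma \ref{L:order2} to the braid identity comparing the two reduced words for the longest element of $\langle\w_\alpha,\w_\beta\rangle$, and then verify that braid identity by separating the $q$-powers, Gauss sums, and $\vep$-signs. Your observation that the first two match \emph{tautologically} --- because the multiset of pairs $\bigl(\angb{y_\rho}{\gamma},Q(\gamma^\vee)\bigr)$ over positive roots $\gamma$ of the rank-two subsystem, which is what these contributions depend on, is the same along either reduced word by $W$-invariance of $Q$ --- is a nice uniform explanation of what the paper checks by hand for $m_{\alpha\beta}=3,4$.

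Two small corrections. Adjacent simple roots satisfy $m_{\alpha\beta}\in\{3,4,6\}$; the $m_{\alpha\beta}=2$ case you list does not occur here. More substantively, you place the $Y_{Q,n}^{\sct}$-freeness hypothesis in the wrong step: the ceiling identity $\ceil{(2-\angb{y}{\alpha})/n_\alpha}+\ceil{\angb{y}{\alpha}/n_\alpha}=1$ and the collapse $|\g(k)|^2=q^{-1}$ are exactly the content of Lemma \ref{L:order2} and hence drive the \emph{reduction} of the product identity to the braid relation, whereas the braid identity itself is an unconditional statement about $\mathbf{t}$ --- the tautological $q$- and $\g$-matching needs no freeness, and the remaining $\vep$-parity check uses only $D(y_1,y_2)+D(y_2,y_1)=B_Q(y_1,y_2)$ together with the $W$-equivariance of $\angb{-}{-}$. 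Keeping these two uses separate will keep the bookkeeping clean when you carry out the twelve-factor verification for $m_{\alpha\beta}=6$.
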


\begin{dfn} \label{D:T}
Let $\mca{O}_y \in \OF_{Q,n, \sct}$ be a $Y_{Q,n}^{\sct}$-free orbit. For any $\w=\w_k \w_{k-1}...\w_2 \w_1\in W$ written as a minimum expansion, write $\mathbf{T}(\w, y):=\prod_{i=1}^k \mathbf{t}(\w_i, \w_{i-1}... \w_1[y]),$ which, by Lemma \ref{L:order2} and Proposition \ref{P:adj-roots}, is independent of the choice of minimum expansion of $\w$.
\end{dfn}

Let $\mca{O}_y \in \OF_{Q,n}$ be a $Y_{Q,n}$-free orbit (and therefore $Y_{Q,n}^{\sct}$-free). We define a nonzero $\cc$ with support $\mca{O}_y$ as follows. First, let $\cc(\s_y)=1$, and for any $\alpha\in \Delta$, let
$$\cc(\s_{\w_\alpha[y]}):= \mathbf{t}(\w_\alpha, y) \cdot \cc(\s_y).$$
Inductively, one can define $\cc(\s_{\w[y]}):=\mathbf{T}(\w, y) \cdot \cc(\s_y)$ for any $\w\in W$. It is well-defined and independent of the minimum decomposition of $\w$. Second, extend $\cc$ by
$$\cc(\s_{\w[y]} \cdot \wt{z}) =  \cc(\s_{\w[y]}) \cdot \wchi(\wt{z}), \ \wt{z} \in \wt{A}.$$
and
$$\cc(\wt{t})=0 \text{ if } \wt{t} \notin \bigcup_{\w\in W} \s_{\w[y]}  \cdot \wt{A}.$$
By using the property that  $\mathbf{T}(\w, y)$ and $\cc(\s_{\w[y]})$ are independent of the minimum decomposition of $\w$, we see that equality (\ref{C:crucial}) is satisfied. It follows that $\wp_{Q,n}(\mca{O}_y)$ belongs to the right hand side of the equality (\ref{dWh}). Therefore,
\begin{equation} \label{LB}
\dim \Wh(\Theta(\wt{G}, \wchi)) \ge \val{\wp_{Q,n}(\OF_{Q,n})}.
\end{equation}

\subsection{An upper bound for $\dim \Wh(\Theta(\wt{G}, \wchi))$} First we show a result in the general setting regarding the usual Weyl action. Let $\Psi$ be a root system and $\Psi_s$ be a fixed choice of simple roots. Write $\text{L}:=\langle \Psi \rangle$ for the lattice generated by $\Psi$ and $V=\text{L}\otimes \R$. The Weyl group $W$ associated to $\Psi$ acts on $V$ naturally by the usual linear transformation generated by simple reflections. Recall that we write $\w(v), \w\in W, v\in V$ for this action.

\begin{lm}
Let $v\in V$ be any vector such that $\w(v)\equiv v \mod \text{L}$. Then there exists $\w'\in W$ and $\alpha\in \Psi_s$ such that $\w_\alpha(\w'(v)) \equiv \w'(v) \mod \text{L}$.
\end{lm}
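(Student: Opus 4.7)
My plan is to show that the subgroup
\[
W_v := \{\w \in W : \w(v) \equiv v \pmod L\}
\]
of $W$ necessarily contains a reflection whenever it is nontrivial, and then to transport such a reflection via $W$-conjugation to a simple reflection. I would first verify that $W_v$ is indeed a subgroup: since $L = \langle \Psi\rangle$ is $W$-stable, whenever $\sigma(v) = v + \ell_1$ and $\tau(v) = v + \ell_2$ with $\ell_1, \ell_2 \in L$,
\[
(\sigma\tau)(v) = \sigma(v) + \sigma(\ell_2) = v + \ell_1 + \sigma(\ell_2) \in v + L.
\]
The hypothesis of the lemma is precisely that $W_v \neq \{1\}$.

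The core step is to produce a reflection in $W_v$. For this I would pass to the affine Weyl group $\widehat W := W \ltimes L$ acting on $V$ by $(\w, \ell)\cdot u := \w(u) + \ell$. Given $\w \in W_v$ with $\w(v) = v - \ell$, the element $(\w, \ell) \in \widehat W$ fixes $v$, so the image in $W$ of the stabilizer $\widehat W_v \subseteq \widehat W$ under the canonical projection $\widehat W \twoheadrightarrow W$ is exactly $W_v$. Now $\widehat W$ is a Coxeter group whose reflections are the affine reflections through the hyperplanes $H_{\alpha, k} := \{u \in V : \langle u, \alpha^\vee\rangle = k\}$ for $\alpha \in \Psi, k \in \Z$, and each such affine reflection projects to $\w_\alpha \in W$. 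By the standard theorem on stabilizers in Coxeter groups (e.g.\ Bourbaki, \emph{Groupes et alg\`ebres de Lie}, Ch.~V, \S4), the stabilizer $\widehat W_v$ is generated by the affine reflections through the hyperplanes containing $v$. Projecting down, $W_v$ is generated by the reflections $\w_\beta \in W$ for those $\beta \in \Psi$ with $\langle v, \beta^\vee\rangle \in \Z$; in particular $W_v$ contains at least one reflection $\w_\beta$.

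Given such $\w_\beta \in W_v$, I would finish as follows. Every root in $\Psi$ is $W$-conjugate to some simple root, since $W$ acts transitively within each root-length class of $\Psi$ and $\Psi_s$ contains representatives of each class. Writing $\beta = \w(\alpha)$ for some $\w \in W$ and $\alpha \in \Psi_s$ gives $\w_\beta = \w \w_\alpha \w^{-1}$, and the $W$-stability of $L$ then yields
\[
\w_\beta(v) \equiv v \pmod L \iff \w_\alpha(\w^{-1}(v)) \equiv \w^{-1}(v) \pmod L.
\]
Setting $\w' := \w^{-1}$ completes the argument. The principal obstacle is the reflection-generation statement for $W_v$ used in the middle step; the affine Coxeter group framework is the cleanest route I know, reducing the claim to a standard stabilizer theorem from Bourbaki.
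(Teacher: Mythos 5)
Your argument is correct. It is closely related to the paper's proof in that both pass to the affine Weyl group $W_{\text{aff}} = \text{L} \rtimes W$ and use Bourbaki's theory there, but the key step is different. The paper's proof is more hands-on: it (implicitly) replaces $v$ by a $W_{\text{aff}}$-translate in the closure of the fundamental alcove, observes that $v$ then lies on one of the $|\Psi_s|+1$ walls, and splits into two cases — if $v$ is on one of the ``linear'' walls through the origin, the corresponding simple reflection $\w_{\alpha_i}$ works directly with $\w' = 1$; if $v$ is on the remaining affine wall, whose associated reflection has Weyl component $\w_\beta$ for a non-simple root $\beta$, one conjugates to bring $\beta$ into $\Psi_s$. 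You instead invoke the general stabilizer-generation theorem for Coxeter groups (the stabilizer of a point is generated by the reflections fixing it) applied to $\widehat{W}_v$, which directly produces a reflection $\w_\beta \in W_v$, and then you conjugate $\beta$ to a simple root. Your route uses a somewhat heavier off-the-shelf theorem but gains uniformity: there is no case split, no need to translate $v$ into a specific alcove, and the isomorphism $\widehat{W}_v \xrightarrow{\sim} W_v$ (noted when you observe $\widehat{W}_v \cap L = 0$) makes the reduction clean. The paper's version is closer to a concrete alcove-wall picture and avoids quoting the stabilizer theorem, at the cost of slightly glossing over the translation step. Both are valid; yours is arguably the more transparent write-up.
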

\begin{proof}
Let $W_\text{aff}=\text{L}\rtimes W$ be the affine Weyl group, and denote any element of $W_\text{aff}$ by $\w_\text{a}=(y, \w)$. We call $\w$ the Weyl component of $\w_\text{a}$. The congruence $\w(v)\equiv v \mod \text{L}$ is equivalent to $\w_\text{a}(v)=v$ for some $\w_\text{a}$ which projects to $\w\in W$.

If $\w_\text{a}(v)=v$, it then follows that $v\in V$ lies on the boundary of $\wt{C}$, where $C$ is an alcove (i.e. a fundamental domain) of the action of $W_\text{aff}$ on $V$, see \cite{Bou}. Note that $\wt{C}$ is a simplicial complex whose boundary consists of $|\Psi_s| +1$ walls $\set{\text{E}_i}$.
Moreover, we may assume that for $1\le i\le \val{\Psi_s}$, the wall $\text{E}_i$ lies in the hyperplane fixed by $\w_\text{a}$ whose Weyl component is $\w_{\alpha_i}$ for some $\alpha_i\in \Psi_s$. In this case, one also knows that $\text{E}_{\val{\Psi_s}+1}$ is fixed by $(y, \w_\beta)\in W_\text{aff}$ for some $\beta \in \Psi-\Psi_s$.

Since $v\in \bigcup_i \text{E}_i$, there are two cases. First, suppose $v \in \text{E}_{i}$ for some $1\le i\le \val{\Psi_s}$; then clearly $\w_{\alpha_i}(v)\equiv v \mod \text{L}$ for some $\alpha_i\in \Psi_s$. Otherwise, suppose $v\in \text{E}_{\val{\Psi_s}+1}$. Let $\w'\in W$ be such that $\w'(\beta)\in \Psi_s$. It follows that $\w'(\text{E}_{\val{\Psi_s}+1})$ is fixed by some $\w_\text{a}=(y, \w_\alpha)$ with $\alpha\in \Psi_s$. That is, $\w_\alpha(\w'(v)) \equiv \w'(v) \mod \text{L}$. The proof is completed.
\end{proof}

\begin{prop}
Consider $\cc\in \Ftn(i(\wchi))$ such that $\lambda_\cc^{\wchi}$ is a $\psi$-Whittaker functional on $\Theta(\wt{G}, \wchi)$. If $\mca{O}_{y^0}$ is not $Y_{Q,n}^{\sct}$-free, then $\cc$ is zero on $\mca{O}_{y^0}$. It follows $\dim \Wh(\Theta(\wt{G}, \wchi)) \le \val{\wp_{Q,n}(\OF_{Q,n, \sct})}$.
\end{prop}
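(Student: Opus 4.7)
The plan is to establish first the vanishing claim that $\cc$ is identically zero on any orbit that is not $Y_{Q,n}^{\sct}$-free; the upper bound on $\dim \Wh(\Theta(\wt{G}, \wchi))$ then follows at once from the characterization (\ref{dWh}). To set up the argument, I apply the preceding lemma with $\Psi := \set{\alpha_{Q,n}^\vee : \alpha \in \Phi}$, whose associated lattice is $Y_{Q,n}^{\sct}$ by definition and whose Weyl group coincides with $W$ (the reflections in $\alpha_{Q,n}^\vee = n_\alpha \alpha^\vee$ and in $\alpha^\vee$ being identical). Since $\mca{O}_{y^0}$ fails to be $Y_{Q,n}^{\sct}$-free, there exist $y^* \in \mca{O}_{y^0}$ and $\w \in W \setminus \set{1}$ with $\w(y^*_\rho) \equiv y^*_\rho \mod Y_{Q,n}^{\sct}$: one may take $\w[y^*] = y^*$ if the orbit is not free, otherwise set $\w = \w_2^{-1} \w_1$ and $y^* = y^0$ for a pair with $\w_1[y^0] \equiv \w_2[y^0] \mod Y_{Q,n}^{\sct}$, using that $W$ stabilizes $Y_{Q,n}^{\sct}$ (Weyl-invariance of $Q$ giving $n_{\w(\alpha)} = n_\alpha$). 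The lemma then produces $\w' \in W$ and $\alpha \in \Delta$ such that, setting $y' := \w'[y^*]$, the congruence $\w_\alpha[y'] \equiv y' \mod Y_{Q,n}^{\sct}$ holds.

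Suppose for contradiction that $\cc(\s_{y^0}) \neq 0$. Iterating Corollary \ref{C:iff-2} along a reduced expression connecting $y^0$ to $y'$ within the orbit yields $\cc(\s_{y'}) = C \cdot \cc(\s_{y^0})$ with $C \neq 0$, because each factor $\mathbf{t}(\w_\beta, -)$ is nonzero by (\ref{abb:t-y}). Applying Corollary \ref{C:iff-2} once more at $y'$ with the simple reflection $\w_\alpha$ gives
\begin{equation*}
\cc(\s_{\w_\alpha[y']}) = \mathbf{t}(\w_\alpha, y') \cdot \cc(\s_{y'}).
\end{equation*}
On the other hand, $\w_\alpha[y'] - y' \in Y_{Q,n}^{\sct} \subseteq Y_{Q,n}$, so (\ref{F:s}) writes $\s_{\w_\alpha[y']} = \s_{y'} \cdot \wt{z}$ for a unique $\wt{z} \in \wt{A}$, whence the defining covariance of $\cc$ forces the identity $\mathbf{t}(\w_\alpha, y') = \wchi(\wt{z})$.

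The final step is an absolute-value comparison. Writing $\angb{y'_\rho}{\alpha} = -m n_\alpha$ for some $m \in \Z$ (integrality resting on the structural identity $Y_{Q,n}^{\sct} \cap \Z \alpha^\vee = n_\alpha \Z \alpha^\vee$) and using $n \mid n_\alpha Q(\alpha^\vee)$ so that $\g(-m n_\alpha Q(\alpha^\vee)) = \g(0) = -1/q$, the definition (\ref{abb:t-y}) collapses to $\val{\mathbf{t}(\w_\alpha, y')} = q^{1-m}$ uniformly in $m$. On the other side, expanding $\s_{y' + m \alpha_{Q,n}^\vee}$ via (\ref{F:s}) exhibits $\wt{z}$ as an $m$-th power of $\s_{\alpha_{Q,n}^\vee}$ up to a root-of-unity cocycle factor, and combining (\ref{F:h-s}) with the exceptional condition $\wchi(\wt{h}_\alpha(\varpi^{n_\alpha})) = q^{-1}$ yields $\val{\wchi(\wt{z})} = q^{-m}$. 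These two absolute values differ by the factor $q \geq 2$, contradicting $\mathbf{t}(\w_\alpha, y') = \wchi(\wt{z})$. Hence $\cc(\s_{y^0}) = 0$, which establishes the vanishing claim and the asserted bound.

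I expect the main obstacle to be bookkeeping in this last step: carefully tracking the Weil-index and Hilbert-symbol decorations from (\ref{F:s}), (\ref{F:h-s}), and the definition of $\vep$ to confirm that the final discrepancy collapses cleanly to a single factor of $q$ in absolute value, which is robust against all such roots of unity.
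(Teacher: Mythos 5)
Your argument is correct and follows the same strategy as the paper's proof: apply the preceding Lemma to produce a $y'\in\mca{O}_{y^0}$ and a simple root $\alpha$ with $\w_\alpha(y'_\rho)\equiv y'_\rho\pmod{Y_{Q,n}^{\sct}}$, then compute $\cc(\s_{\w_\alpha[y']})$ two ways — once via the $\mathbf{t}$-recursion from Corollary \ref{C:iff-2}, once via the $\wt{A}$-covariance of $\cc$ together with the exceptional condition $\wchi(\wt{h}_\alpha(\varpi^{n_\alpha}))=q^{-1}$ — and observe that they disagree, forcing $\cc(\s_{y'})=0$. The only genuine difference is that you compare absolute values, which absorbs all roots-of-unity and Hilbert-symbol decorations at once and makes the obstruction to matching an unambiguous factor of $q$, whereas the paper tracks the $\vep$-factors and the $\GGMA$, $\g$-factors explicitly and derives the exact scalar identity $\cc(\s_y)=(-q^{-1})\cdot\cc(\s_y)$. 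Your absolute-value shortcut is a legitimate simplification with no loss of rigor; it is somewhat more robust, since a small misbookkeeping of one of the unitary factors would not harm the argument, while the paper's exact version would require that factor to be handled correctly.
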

\begin{proof}
Write $V=Y\otimes \R$. One has $V=(Y^{\sct}\otimes \R) \oplus V_0$ where $V_0\subseteq V$ is fixed by $W$ pointwise with respect to the usual action, i.e., the action $\w(v)$ of $W$. In general $y^0_\rho\in V$; however, without loss of generality, we may assume $y^0_\rho\in Y^{\sct}\otimes \R$ now. There is a canonical $W$-equivariant isomorphism $Y_{Q,n}^{\sct}\otimes \R \simeq Y^{\sct}\otimes \R$ with respect to that usual action. Moreover, $\set{\alpha_{Q,n}^\vee}_{\alpha\in \Phi}$ forms a root system.

If $\mca{O}_{y^0}$ is not $Y_{Q,n}^{\sct}$-free, then there exists $\w\in W$ such that $\w[y^0]\equiv y^0 \mod Y_{Q,n}^{\sct}$, i.e. $\w(y^0_\rho)\equiv y^0_\rho \mod Y_{Q,n}^{\sct}$. By the preceding Lemma, there exists $y\in \mca{O}_{y^0}$ and $\alpha\in \Delta$ such that $\w_\alpha(y_\rho)\equiv y_\rho \mod Y_{Q,n}^{\sct}$. Now it suffices to show that $\cc$ vanishes on $y$.

By Corollary \ref{C:iff-2}, $\cc(\s_{\w_\alpha[y]}) =\mathbf{t}(\w_\alpha, y) \cdot \cc(\s_y)$.
Since $\w_\alpha(y_\rho)\equiv y_\rho \mod Y_{Q,n}^{\sct}$, it follows $n_\alpha| \angb{y_\rho}{\alpha}$.
Write $\angb{y_\rho}{\alpha}=k\cdot n_\alpha$. Since $\s_{\w_\alpha[y]}=\s_y \cdot \s_{-\angb{y_\rho}{\alpha})\alpha^\vee}  \cdot \vep^{\angb{y_\rho}{\alpha}\cdot D(\alpha^\vee, y)}$, one has
\begin{align*}
& \cc(\s_{\w_\alpha[y]}) \\
=& \wchi(\s_{-k n_\alpha \alpha^\vee}) \cdot \cc(\s_y)  \cdot \vep^{\angb{y_\rho}{\alpha}\cdot D(\alpha^\vee, y)} \\
=& q^{k} \cdot \vep^{kn_\alpha \cdot D(\alpha^\vee, y)} \cdot \cc(\s_y)  .
\end{align*}
On the other hand,
\begin{align*}
& \mathbf{t}(\w_\alpha, y) \cdot \cc(\s_{y})\\
= & q^{k_{y,\alpha} -1} \cdot \GGMA(y, \alpha^\vee) \cdot \g(\angb{y_\rho}{\alpha}Q(\alpha^\vee))^{-1} \cdot \cc(\s_y) \\
=& q^k \cdot (-1, \varpi)_n^{kn_\alpha \cdot D(y, \alpha^\vee)} \cdot (-q^{-1}) \cdot \cc(\s_y) .
\end{align*}
It follows $\cc(\s_y)=-q^{-1}\cdot \vep^{kn_\alpha B(y, \alpha^\vee)} \cdot \cc(\s_y)=(-q^{-1})\cdot \cc(\s_y)$.
Therefore $\cc(\s_y)=0$. The proof is completed. 
\end{proof}

\begin{thm} \label{T:LUB}
Let $\wt{G}$ be an unramified Brylinski-Deligne covering group incarnated by $(D, \eta)$. Let $\wchi$ be an unramified exceptional character and $\Theta(\wt{G}, \wchi)$ the theta representation associated with $\wchi$. Then
$$\val{\wp_{Q,n}(\OF_{Q,n})} \le \text{dim} \Wh(\Theta(\wt{G}, \wchi)) \le \val{\wp_{Q,n}(\OF_{Q,n, \sct})}.$$
\end{thm}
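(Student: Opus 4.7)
The plan is to package together the two bounds that have already been established in the development preceding the statement; there is essentially nothing new to prove, and the theorem amounts to a clean amalgamation of inequality (\ref{LB}) with the upper-bound proposition immediately preceding it.

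For the lower bound $\val{\wp_{Q,n}(\OF_{Q,n})} \le \dim \Wh(\Theta(\wt{G}, \wchi))$, I would appeal to the explicit construction: for every $Y_{Q,n}$-free orbit $\mca{O}_y \in \OF_{Q,n}$, define a function $\cc$ supported on $\mca{O}_y \cdot \wt{A}$ by $\cc(\s_y)=1$, $\cc(\s_{\w[y]}) = \mathbf{T}(\w, y)$, and $\cc(\s_{\w[y]} \cdot \wt{z}) = \mathbf{T}(\w, y) \cdot \wchi(\wt{z})$ for $\wt{z} \in \wt{A}$. The well-definedness uses Lemma \ref{L:order2} and Proposition \ref{P:adj-roots} so that $\mathbf{T}(\w,y)$ is independent of the reduced expression of $\w$; the $Y_{Q,n}$-freeness ensures no conflict in the $\wt{A}$-equivariant extension. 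Equation (\ref{C:crucial}) then holds on $\mca{O}_y$ by construction, so $\lambda_\cc^{\wchi} \in \Wh(\Theta(\wt{G}, \wchi))$. Since distinct classes $\wp_{Q,n}(\mca{O}_y)$ yield linearly independent such functionals (their supports are disjoint modulo $\wt{A}$), the lower bound follows.

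For the upper bound $\dim \Wh(\Theta(\wt{G}, \wchi)) \le \val{\wp_{Q,n}(\OF_{Q,n, \sct})}$, I would invoke the proposition immediately preceding the statement: any $\cc \in \Ftn(i(\wchi))$ whose associated functional lies in $\Wh(\Theta(\wt{G}, \wchi))$ must vanish identically on every orbit $\mca{O}_{y^0}$ that fails to be $Y_{Q,n}^{\sct}$-free. Substituting this vanishing into the characterization (\ref{dWh}) of $\dim \Wh(\Theta(\wt{G}, \wchi))$ as a count of projections $\wp_{Q,n}(\mca{O}_{y_0})$ for orbits which support a nonzero $\cc$ immediately bounds this count by $\val{\wp_{Q,n}(\OF_{Q,n, \sct})}$, since only $Y_{Q,n}^{\sct}$-free orbits can contribute.

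The only point to verify for the packaging is consistency of the two projections, but since $\OF_{Q,n} \subseteq \OF_{Q,n,\sct} \subseteq \OF$ and both bounds are expressed through the same projection $\wp_{Q,n}: Y \to Y/Y_{Q,n}$, the sandwich makes sense as stated. The real work — the braid-relation verifications underlying the well-definedness of $\mathbf{T}$, and the affine-Weyl-group argument forcing vanishing on non-$Y_{Q,n}^{\sct}$-free orbits — has already been carried out, so the theorem is obtained with a one-line assembly.
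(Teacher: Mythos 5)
Your proposal is correct and follows exactly the paper's route: the lower bound is inequality (\ref{LB}) obtained by constructing $\cc$ with $\cc(\s_{\w[y]}) = \mathbf{T}(\w,y)\cdot\cc(\s_y)$ on a $Y_{Q,n}$-free orbit (well-defined by Lemma \ref{L:order2} and Proposition \ref{P:adj-roots}), and the upper bound is the vanishing proposition immediately preceding the theorem, both fed into (\ref{dWh}). The theorem statement is precisely the one-line amalgamation of these two established bounds, as you say.
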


The center $Z(\wt{G}^\vee)$ of the dual group $\wt{G}^\vee$ of $\wt{G}$ is identified with $\Hom(Y_{Q,n}/Y_{Q,n}^{\sct}, \C^\times)$. Therefore  $Y_{Q,n}^{\sct}=Y_{Q,n}$ if and only if $Z(\wt{G}^\vee)=\set{1}$. Immediately it follows:
\begin{cor} \label{C:MC}
If the dual group $\wt{G}^\vee$ of $\wt{G}$ is of adjoint type, i.e. $Z(\wt{G}^\vee)=1$, then $\text{dim} \Wh(\Theta(\wt{G}, \wchi))=\val{\wp_{Q,n}(\OF_{Q,n})}$.
\end{cor}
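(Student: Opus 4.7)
The plan is to observe that this is an immediate combinatorial consequence of Theorem \ref{T:LUB} once one unpacks the identification $Z(\wt{G}^\vee) = \Hom(Y_{Q,n}/Y_{Q,n}^{\sct}, \C^\times)$ recalled in \S\ref{Sec:SF}. Since $Y_{Q,n}/Y_{Q,n}^{\sct}$ is a finitely generated abelian group, the vanishing of its $\C^\times$-valued character group is equivalent to the vanishing of the group itself. Thus $Z(\wt{G}^\vee) = 1$ forces $Y_{Q,n} = Y_{Q,n}^{\sct}$.

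Given this equality of lattices, the two notions of $\Lambda$-freeness for orbits coincide: an orbit $\mca{O}_y \in \OF$ is $Y_{Q,n}$-free if and only if it is $Y_{Q,n}^{\sct}$-free, since both conditions are defined purely in terms of the injectivity of $Y \to Y/\Lambda$ on $\mca{O}_y$ for the same sublattice $\Lambda$. Consequently $\OF_{Q,n} = \OF_{Q,n,\sct}$ as subsets of $\OF$, and therefore $\wp_{Q,n}(\OF_{Q,n}) = \wp_{Q,n}(\OF_{Q,n,\sct})$ as subsets of $Y/Y_{Q,n}$.

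Applying Theorem \ref{T:LUB} now sandwiches $\dim \Wh(\Theta(\wt{G}, \wchi))$ between two quantities with equal cardinality, yielding the stated equality. There is no real obstacle here; the entire content of the corollary is the translation of the adjoint-type condition on $\wt{G}^\vee$ into the coincidence of the two sublattices $Y_{Q,n}^{\sct} \subseteq Y_{Q,n}$ that govern the bounds in Theorem \ref{T:LUB}.
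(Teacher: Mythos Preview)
Your proposal is correct and takes essentially the same approach as the paper: the paper observes just before stating the corollary that $Z(\wt{G}^\vee)=\Hom(Y_{Q,n}/Y_{Q,n}^{\sct},\C^\times)$, so $Z(\wt{G}^\vee)=1$ is equivalent to $Y_{Q,n}=Y_{Q,n}^{\sct}$, and then the corollary is declared to follow immediately from Theorem~\ref{T:LUB}. Your write-up simply spells out this immediate deduction in a little more detail.
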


For groups of type $\TE_8, \TF_4$ and $\TG_2$, the complex dual group of their covering group has trivial center and thus Corollary \ref{C:MC} applies.

More generally, if $\OF_{Q,n}=\OF_{Q,n,\sct}$, then the dimension of $\Wh(\Theta(\wt{G}, \wchi))$ can be uniquely determined. We will illustrate below that Theorem \ref{T:LUB} reovers the result of Kazhdan-Patterson in this case.

\begin{eg} \label{E:KP}
Let $\set{e_1, e_2, ..., e_r}$ be a basis for the cocharacter lattice $Y$ of $\GL_r$. The simple coroots $\Delta^\vee$ of $\GL_r$ are $\Delta^\vee=\set{\alpha_i^\vee:=e_i - e_{i+1}}_{1\le i\le r-1}$. The isomorphism class of $(D,\eta)$ in the incarnation category corresponds to a Weyl-invariant quadratic form $Q$, or equivalently, to the bilinear form $B_Q$. Let $B_Q(e_i, e_j)$ be the Weyl-invariant bilinear form determined by
$$B_Q(e_i, e_i)=2\p, \quad B_Q(e_i, e_j)=\q \text{ if } i\ne j.$$
For any root $\alpha$, one has $Q(\alpha^\vee)=2\p-\q$. We assume $2\p-\q=-1$ and therefore $n_\alpha=n$. The covering groups $\wt{\GL}_r^{(n)}$ arising from such $B_Q$ are exactly those studied by Kazhdan-Patterson. The parameter $\p$ corresponds to the twisting parameter $c$ in \cite{KP}. 

%For simplicity, we consider the cover $\wt{\GL}_r^{(n)}$ obtained as the pull-back of $\wt{\SL}_{r+1}^{(n)}$ (with $Q(\alpha^\vee)=1$ for any coroot of $\SL_{r+1}$) via ${\GL}_r \hookrightarrow {\SL}_{r+1}$ has $\p=\q=-1$. 

From $B_Q$, the lattice $Y_{Q,n}$ is given by
$$\set{\sum_i x_i e_i \in \bigoplus_{i=1}^r \Z e_i:  x_1\equiv x_2\equiv ... \equiv x_r \text{ mod } n, \text{ and } n| (\q r-1)x_i}.$$
The lattice $Y_{Q,n}^{\sct}$ is generated by $\set{\alpha^\vee_{Q,n}}_{\alpha\in \Phi}$. It is easy to check $Y_{Q,n}^{\sct}=Y_{Q,n}\cap Y^{\sct}$, and this has the following implications.
%\begin{lm}
%For the Kazhdan-Patterson convering $\wt{\GL}_r^{(n)}$, one has
%$$Y_{Q,n} \cap Y^{\sct}= Y_{Q,n}^{\sct}.$$
%\end{lm}
%\begin{proof}
%It suffices to show $Y_{Q,n} \cap Y^{\sct} \subseteq Y_{Q,n}^{\sct}$. Any element in $Y^{sc}$ is of the form $\sum_{i=1}^{r-1} x_i \alpha_i^\vee$, i.e. 
%$$(x_1, x_2-x_1, x_3-x_2, ..., x_{r-1} -x_{r-2}, -x_{r-1}) \in \bigoplus_{i=1}^r \Z e_i.$$
%It lies in $Y_{Q,n}$ if and only if
%$$a\equiv x_1\equiv x_2-x_1\equiv x_3-x_2\equiv ... \equiv x_{r-1} -x_{r-2} \equiv -x_{r-1} \text{ mod } n,$$
%and also $n|(\q r-1)a$.
%However, since $x_1 + (x_2-x_1) + ... + (x_{r-1} - x_{r-2}) + (-x_{r-1}) =0$, we see
%$n| ra$. It follows $n|a$, and therefore $n| x_i \text{ for all } i$. That is, $\sum_{i=1}^{r-1} x_i \alpha_i^\vee \in Y_{Q,n}^{sc}$. The proof is completed.
%\end{proof}

Suppose that $\mca{O}_y$ is a not $Y_{Q,n}$-free, i.e. $\w[y]-y \in Y_{Q,n}$ for some $\w\ne 1\in W$. Clearly $\w[y]-y\in Y^{\sct}$ as well. It follows $\w[y]-y \in Y_{Q,n}^{\sct}$, i.e., $\mca{O}_y$ is not $Y_{Q,n}^{\sct}$-free. Therefore, for the Kazhdan-Patterson covering group $\wt{\GL}_{r}^{(n)}$, one has $\OF_{Q,n}=\OF_{Q,n,\sct}$. Consequently, for the covering group $\wt{\GL}_r^{(n)}$ with parameter $(\p, \q)$ such that $2\p-\q=-1$, Theorem \ref{T:LUB} yields
$$\dim \Wh(\Theta(\wt{G}, \wchi))=\val{ \wp_{Q,n}(\OF_{Q,n, \sct}) },$$
which is the content of \cite[Theorem I.3.5]{KP}. Moreover, distinguished theta representations (cf. Definition \ref{D:DTR}) for $\wt{\GL}_r^{(n)}$ are completely determined in \cite[Corollary I.3.6]{KP}.
%\begin{cor}[{[KP84, Theorem 1.3.5]}]
%For the covering $\wt{\GL}_r^{(n)}$, we have $\dim \Wh(\Theta(\wchi))=\val{ \wp_{Q,n}(\OF_{Q,n}) }$.
%\end{cor}
\end{eg}
%\vskip 5pt

In the remaining part of the paper, we will determine the distinguished theta representations for coverings of simply-connected groups of type $\TA_r, \TB_r, \TC_r$ and $\TG_2$. To ease the computations, we will use the standard  coordinates for the coroot system of each type as in \cite[pp. 265-290]{Bou}.

%%%%%%%%%%%%%%%%%%%%%%%%%%%%%%%%%%%%%%%%%%%%%%%%%
\vskip 15pt

\section{The $\TA_r, r\ge 1$ case}

Consider the Dynkin diagram for the simple coroots of $\TA_r$:

$$\qquad 
\begin{picture}(4.7,0.2)(0,0)
\put(1,0){\circle{0.08}}
\put(1.5,0){\circle{0.08}}
\put(2,0){\circle{0.08}}
\put(2.5,0){\circle{0.08}}
\put(3,0){\circle{0.08}}
\put(1.04,0){\line(1,0){0.42}}
\multiput(1.55,0)(0.05,0){9}{\circle{0.02}}
\put(2.04,0){\line(1,0){0.42}}
\put(2.54,0){\line(1,0){0.42}}
\put(1,0.1){\footnotesize $\alpha_{1}^\vee$}
\put(1.5,0.1){\footnotesize $\alpha_{2}^\vee$}
\put(2,0.1){\footnotesize $\alpha_{r-2}^\vee$}
\put(2.5,0.1){\footnotesize $\alpha_{r-1}^\vee$}
\put(3,0.1){\footnotesize $\alpha_r^\vee$}
\end{picture}
$$
\vskip 10pt

The cocharacter lattice is $Y=Y^{\sct}=\bigoplus_{i=1}^{r} \Z \alpha_i^\vee$. As in \cite[page 265]{Bou}, consider the embedding $\ii_\TA: \bigoplus_{i=1}^{r} \Z \alpha_i^\vee \to \bigoplus_{i=1}^{r+1} \Z e_i$, which is given by
$$\ii_\TA: y=(x_1, x_2, ..., x_r) \mapsto \ii_\TA(y)=(x_1, x_2-x_1, x_3-x_2, ..., x_{r} -x_{r-1}, -x_{r}).$$
In particular, we can identify the image of $\ii_\TA$: any $(y_1, y_2, ..., y_r, y_{r+1}) \in \bigoplus_{i=1}^{r+1} \Z e_i$ is equal to $\ii_\TA(y)$ for some $y$ if and only if $\sum_{i=1}^{r+1} y_i=0.$

Meanwhile, $\rho=\sum_{i=1}^r \frac{i(r-i +1)}{2}\alpha_i^\vee$. We use $\ii_\TA: \bigoplus_{i=1}^{r} \Q \alpha_i^\vee \to \bigoplus_{i=1}^{r+1} \Q e_i$ to denote the canonical extension of $\ii_\TA$. Then, $\ii_\TA(\rho)=(r/2, (r-2)/2,..., -(r-2)/2, -r/2) \in \bigoplus_{i=1}^{r+1} \Q e_i$.

It follows that for any $y\in Y$, 
\begin{align*}
&\ii_\TA(y_\rho) \\
=&(x_1 -\frac{r}{2}, ..., x_i -x_{i-1} + (i-1) -\frac{r}{2}, ..., -x_r + r -\frac{r}{2}), \quad 1\le i\le r \\
=&(x_1, x_2-x_1 +1, ..., x_i -x_{i-1} + (i-1), ..., -x_r + r) + (-r/2, -r/2, ..., -r/2).
\end{align*}
From now, we write $\ii_\TA^*(y_\rho):=(x_1^*, x_2^*, ..., x_r^*, x_{r+1}^*)$ for $(x_1, x_2-x_1 +1, ..., x_i -x_{i-1} + (i-1), ..., -x_r + r) \in \bigoplus_i \Z e_i$. Thus,
$$\ii_\TA(y_\rho)=\ii_\TA^*(y_\rho) + (-r/2, -r/2, ..., -r/2).$$
Meanwhile, any $(x_1^*, x_2^*, ..., x_r^*, x_{r+1}^*) \in \bigoplus_i \Z e_i$ is equal to $\ii_\TA^*(y_\rho)$ for some $y$ if and only if $\sum_{i=1}^{r+1} x_i^* = r(r+1)/2.$

\vskip 5pt

Consider the quadratic form $Q$ on $Y=\angb{\alpha_i^\vee}{1\le i\le r}$ with $Q(\alpha^\vee_i)=1$ for all $i$.
Then
\begin{equation*}
B_Q(\alpha_i^\vee, \alpha_j^\vee) =
\begin{cases}
2, & \text{ if } i=j;\\
-1, & \text{ if } j=i+1;\\
0, & \text{ if $\alpha_i^\vee, \alpha_j^\vee$ are not adjacent.}
\end{cases}
\end{equation*}
This gives rise to the degree $n$ covering  group $\wt{\SL}_{r+1}^{(n)}$. Any element $\sum_{i=1}^r x_i \alpha_i^\vee \in Y$ lies in $Y_{Q,n}$ if and only if
\begin{equation*}
\begin{cases}
2x_1 -x_2\\
-x_1 + 2x_2 - x_3\\
-x_2 + 2x_3 - x_4 \\
... \\
-x_{r-2} + 2x_{r-1} - x_r \\
-x_{r-1} + 2x_r
\end{cases}
\in n\Z.
\end{equation*}
By using $\ii_\TA$, we see
$$
Y_{Q,n}=
\left\{
\begin{array}{cc}
(y_1, y_2, ..., y_r) \in \bigoplus_{i=1}^{r+1} \Z e_i: \\
\bullet \ \ \sum_{i=1}^{r+1} y_i=0, \\
\bullet \ \  y_1\equiv ... \equiv y_r \equiv y_{r+1} \text{ mod } n
\end{array}\right\}
\text{ and }
Y_{Q,n}^{\sct}=
\left\{
\begin{array}{cc}
(y_1, y_2, ..., y_r) \in \bigoplus_{i=1}^{r+1} \Z e_i: \\
\bullet \quad \sum_{i=1}^{r+1} y_i=0, \\
\bullet \quad  n|y_i \text{ for all } i.
\end{array}\right\}
$$

The Weyl group $W=S_{r+1}$ acts as permutations on $\bigoplus_{i=1}^{r+1}\Z e_i$. In particular, $\w_{\alpha_i}$ for $\alpha_i\in \Delta$ acts by exchanging $e_i$ and $e_{i+1}$.
 
\subsection{Case I: $\wt{\SL}_{r+1}^{(n)}, n\le r$}
Suppose $n\le r$, then for any $y\in Y$ with $\ii_\TA^*(y_\rho)=(x_1^*, x_2^*, ..., x_{r+1}^*)$, there exists $x_i^*, x_j^*, i\ne j$ such that  $n| (x_i^*-x_j^*)$. Then clearly $\w(y_\rho)-y_\rho \in Y_{Q,n}^{\sct}$ for some $\w\in W$. That is, $\mca{O}_{y} \notin \OF_{Q, n, \sct}$ and one has in this case 
$$\OF_{Q, n, \sct} = \emptyset.$$
Therefore, $\dim \Wh(\Theta(\wt{\SL}_{r+1}^{(n)}, \wchi))=0$ for $n\le r$.

%%%%%%%%
\subsection{Case II: $\wt{\SL}_{r+1}^{(n)}, n=r+1$} \label{S:A-delic}
In this case, the dual group for $\wt{\SL}_n^{(n)}$ is $\SL_n$, see \cite{We2}. Consider $\mca{O}_y \in \OF_{Q,n, \sct}$ such that  $\ii^*_\TA(y_\rho)=(0, 1, 2, ..., r-1, r) \in \bigoplus_{i=1}^{r+1} \Z e_i$. It is easy to check $\wp_{Q,n}^{\sct}(\OF_{Q,n,\sct})=\set{\wp_{Q,n}^{\sct}(\mca{O}_y)}$, and this implies $\val{\wp_{Q,n}(\OF_{Q, n, \sct})}=1$. However, we have $\mca{O}_y \notin \OF_{Q,n}$. For example, let $\w_\natural$ be such that $\ii^*_\TA(\w_\natural(y_\rho))=(1, 2, ..., r, 0)$, then $\ii_\TA(\w_\natural(y_\rho))- \ii_\TA(y_\rho)=(1, 1, ..., 1, -r) \in Y_{Q,n}$. That is, $\w_\natural[y]-y\in Y_{Q,n}$. Therefore,
$$\val{\wp_{Q,n} (\OF_{Q, n})}=0.$$

It follows that $0\le \dim \Wh(\Theta(\wt{\SL}_{n}^{(n)}, \wchi))\le 1$. In this case, it is delicate to determine $\dim \Wh(\Theta(\wt{\SL}_{n}^{(n)}, \wchi))$, and there are additional constraints on the exceptional character $\wchi$ such that $\Theta(\wt{\SL}_{n}^{(n)}, \wchi)$ is distinguished. The analysis below is devoted to this.

%%%%%%%%%%%%%%%%%%%%%%%%%%%%%%%%
\subsubsection{The reduction step}
It is clear that $\ii_{\TA}^*(y_\rho)=(0, 1, 2, ..., r-1, r)$ if and only if $y=0$. Moreover, $\ii_{\TA}^*(\w_\natural(y_\rho))=(1, 2, 3, ..., r, 0)$ for $\w_\natural=\w_{\alpha_r} \w_{\alpha_{r-1}} ... \w_{\alpha_2} \w_{\alpha_1}$. As above,
$$\w_\natural [0]-0=\sum_{i=1}^r i \cdot \alpha_i^\vee \in Y_{Q,n}.$$
Write $y_{Q,n}:=\sum_{i=1}^r i \cdot \alpha_i^\vee$. In fact, the set $\set{n\alpha_i^\vee: 2\le i\le r} \cup \set{y_{Q,n}}$ forms a basis for $Y_{Q,n}$, whereas $\set{n\alpha_i^\vee: 2\le i\le r} \cup \set{n\cdot y_{Q,n}}$ is a basis for $Y_{Q,n}^{\sct}$. It follows that any exceptional character $\wchi$ is determined by its value at $\s_{y_{Q,n}}$.

We choose the bisector $D$ on $Y^{\sct}$ such that $D(\alpha_i^\vee, \alpha_j^\vee)$ is given by
$$D(\alpha_i^\vee, \alpha_j^\vee)=
\begin{cases}
Q(\alpha^\vee_i) & \text { if } i=j; \\
0 & \text{ if } i< j;\\
B_Q(\alpha_i^\vee, \alpha_j^\vee) & \text{ if } i>j.
\end{cases}
$$

Recall from Corollary \ref{C:iff-2} that $\cc \in \Ftn(i(\wchi))$ gives rise to a $\psi$-Whittaker functional of $\Theta(\wt{\SL}_n^{(n)}, \wchi)$ if and only if for all $y$ and $\alpha \in \Delta$,
$$\cc(\s_{\w_\alpha[y]}) =q^{k_{y,\alpha} -1} \cdot \GGMA(y, \alpha^\vee) \cdot \g(B(\alpha^\vee, y_\rho))^{-1} \cdot \cc(\s_y).$$
For $1\le i\le r$, write $y_{\langle i \rangle}=\w_{\alpha_i} \w_{\alpha_{i-1}} ... \w_{\alpha_1}[0]$ and we set $y_{\langle 0\rangle }=0$. Recall that $\mathbf{t}(\w_\alpha, y)$ is the coefficient in the above formula. In this case, it reads $\mathbf{t}(\w_\alpha, y)=q^{k_{y,\alpha} -1} \cdot \GGMA(y, \alpha^\vee) \cdot \g(\angb{y_\rho}{\alpha})^{-1}$ since $Q(\alpha^\vee)=1$ (and therefore $n_\alpha=n$) for all $\alpha\in \Delta$. In order to have $\dim \Wh(\Theta(\wt{\SL}_{n}^{(n)}, \wchi))=1$, we must have the equality
\begin{equation} \label{E:A}
\wchi(\s_{y_{Q,n}}) = \mathbf{T}(\w_\natural, 0) \text{ where } \mathbf{T}(\w_\natural, 0)=\prod_{i=1}^r \mathbf{t}(\w_{\alpha_{i}}, y_{\langle i-1\rangle }).
\end{equation}

We would like to show that the equality (\ref{E:A}) is also sufficient. Consider any $\w'\in W, y\in \mca{O}_0$, one has $\cc(\s_{\w'[y]})=\mathbf{T}(\w', y) \cdot \cc(\s_y)$. Now assume that $\w'[y]-y\in Y_{Q,n}$, we have
$$\cc(\s_{\w'[y]-y + y})=\wchi(\s_{\w'[y]-y})\cdot \cc(\s_y) \cdot \vep^{D(\w'[y]-y, y)}.$$
To show $\dim \Wh(\Theta(\wt{\SL}_{n}^{(n)}, \wchi))=1$, it suffices to show $\cc(\s_y)$ to be nonzero for all $y\in \mca{O}_0$ such that $\w'[y]-y\in Y_{Q,n}$. That is, it requires
\begin{equation} \label{E:gen-conA}
\wchi(\s_{\w'[y]-y})=\vep^{D(\w'[y]-y, y)} \cdot \mathbf{T}(\w',y).
\end{equation}

Write $\w'[y]-y=\sum_{i=2}^{r} k_i \cdot \alpha_{i, Q,n}^\vee + k_1\cdot y_{Q,n}$. Note that $\mca{O}_0$ is $Y_{Q,n}^{\sct}$-free, thus $k_1\ne 0$. We may reduce the negative case to the positive case by a simple computation, and therefore we can assume that $k_1\ge 1$. Further more, we may apply induction on $k_1$, and thus it suffices to: i) prove the inductive step, ii) check the equality (\ref{E:gen-conA}) when $\w'[y]-y=\sum_{i=2}^{r} k_i \alpha_{i, Q,n}^\vee + y_{Q,n}$. The assertion i) can be checked easily, and thus we will only outline the proof of ii). 

For ii), if $\w'[y]-y=\sum_{i=2}^{r} k_i \alpha_{i, Q,n}^\vee + y_{Q,n}$, then it is not hard to see that $\w'[y]-y=\w(y_{Q,n})$, i.e., $\w^{-1}\w'[y]-\w^{-1}[y]=y_{Q,n}$ for some $\w\in W$. We may change $\w$ if necessary such that $\w^{-1}[y]=0$. With this assumption, $\w^{-1}\w' \w=\w_\natural$, i.e. $\w'=\w \w_\natural \w^{-1}$. Therefore, we are reduced to show that for any $\w\in W$:
\begin{equation} \label{E:AA}
\wchi(\s_{\w \w_\natural[0]-\w[0]})=\vep^{D(\w \w_\natural [0]-\w[0], \w[0])} \cdot \mathbf{T}(\w \w_\natural \w^{-1},\w[0]).
\end{equation}
To show (\ref{E:AA}), we would like to apply induction on the length of $\w$. When $\w=1$, it is just the equality (\ref{E:A}).
For the induction step, assuming the equality (\ref{E:AA}), we would like to prove that for $\alpha\in \Delta$ the following equality holds:
\begin{equation} \label{E:A3}
\wchi(\s_{\w_\alpha \w \w_\natural[0]-\w[0]})=\vep^{D(\w_\alpha \w \w_\natural [0]-\w_\alpha \w[0], \w_\alpha \w[0])} \cdot \mathbf{T}(\w_\alpha \w \w_\natural \w^{-1} \w_\alpha^{-1},\w_\alpha \w[0]).
\end{equation}
For this purpose, write $x:=\w \w_\natural[0] -\w[0] \in Y_{Q,n}$. We have $n_\alpha | \angb{x}{\alpha}$. Write $\angb{x}{\alpha}=k\cdot n_\alpha$. 

The left hand side of (\ref{E:A3}) is
\begin{align*}
& \wchi\big( \s_{x-\angb{x}{\alpha}\alpha^\vee} \big) \\
=& \wchi\big( \s_{x} \big) \cdot \wchi\big( \s_{-k n_\alpha \alpha^\vee} \big) \cdot \vep^{D(x, -kn_\alpha \alpha^\vee)} \\
=& \wchi\big( \s_{x} \big) \cdot \wchi\big(\wt{h}_\alpha(\varpi^{n_\alpha})\big)^{-k} 
%\text{ by the fairness of $D$ }
 \\
=& q^k \cdot \wchi(\s_x).
\end{align*}
The right hand side of (\ref{E:A3}) is
% (by using fairness of $D$)
\begin{align*}
& \vep^{D(\w_\alpha(x), \w_\alpha \w[0])} \cdot \mathbf{t}(\w_\alpha, \w \w_\natural[0]) \cdot \mathbf{T}(\w \w_\natural  \w^{-1}, \w[0]) \cdot \mathbf{t}(\w_\alpha, \w_\alpha \w[0]) \\
=& \vep^{D(x, \w_\alpha \w[0]) - \w[0])} \cdot \wchi(\s_x) \cdot 
\mathbf{t}(\w_\alpha, \w \w_\natural[0]) \cdot \mathbf{t}(\w_\alpha, \w_\alpha \w[0]) \text{ by (\ref{E:AA})} \\
=& \vep^{D(x, \w_\alpha \w[0]) - \w[0])} \cdot \wchi(\s_x) \cdot \mathbf{t}(\w_\alpha, \w \w_\natural[0]) \cdot \mathbf{t}(\w_\alpha, \w[0])^{-1}\\
=& \vep^{D(x, \w_\alpha \w[0]) - \w[0])} \cdot q^{\ceil{\frac{\angb{\w[0]}{\alpha}}{n_\alpha}}-1+k} \cdot \g(\angb{\w(0_\rho)}{\alpha}Q(\alpha^\vee))^{-1} \cdot \vep^{\angb{\w(0_\rho)}{\alpha}\cdot D(\w \w_\natural[0], \alpha^\vee)} \\
& \qquad \qquad \cdot \wchi(\s_x) \cdot q^{-\ceil{\frac{\angb{\w[0]}{\alpha}}{n_\alpha}}+1} \cdot \g(\angb{\w(0_\rho)}{\alpha}Q(\alpha^\vee)) \cdot \vep^{\angb{\w(0_\rho)}{\alpha}\cdot D(\w[0], \alpha^\vee)} \\
=& \wchi(\s_x) \cdot q^k \cdot \vep^{\angb{\w(0_\rho)}{\alpha}D(x, \alpha^\vee)} \cdot \vep^{\angb{\w(0_\rho)}{\alpha}D(x, \alpha^\vee)} \\
=&  \wchi(\s_x) \cdot q^k,
\end{align*}
which is clearly equal to the left hand side. To summarize, we have

\begin{prop} \label{P:cond-A}
Let $\wchi \in \Hom_\iota (Z(\wt{T}), \C^\times)$ be an exceptional character of $\wt{\SL}_{n}^{(n)}$. Then 
$$\dim \Wh(\Theta(\wt{\SL}_{n}^{(n)}, \wchi))=1$$ if and only if $\wchi$ is the unique exceptional character satisfying (\ref{E:A}).
\end{prop}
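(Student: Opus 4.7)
The plan is to exploit the very tight squeeze coming from Theorem~\ref{T:LUB} and the orbit analysis of Case~II: since $\val{\wp_{Q,n}(\OF_{Q,n,\sct})} = 1$ and $\val{\wp_{Q,n}(\OF_{Q,n})} = 0$, Theorem~\ref{T:LUB} forces $\dim \Wh(\Theta(\wt{\SL}_n^{(n)}, \wchi)) \in \{0, 1\}$, with the only possibly nonzero contribution coming from the orbit $\mca{O}_0$. It therefore suffices, by the criterion in (\ref{dWh}), to determine exactly when some $\cc \in \Ftn(i(\wchi))$ supported on $\bigcup_{\w\in W} \s_{\w[0]} \cdot \wt{A}$ satisfies the recursion (\ref{C:crucial}) on the orbit $\mca{O}_0$ with $\cc(\s_0) \neq 0$.

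For necessity, I would start from any such $\cc$ and iterate (\ref{C:crucial}) along the minimum expansion $\w_\natural = \w_{\alpha_r} \w_{\alpha_{r-1}} \cdots \w_{\alpha_1}$ of the ``long cycle" described just before the statement, giving $\cc(\s_{\w_\natural[0]}) = \mathbf{T}(\w_\natural, 0) \cdot \cc(\s_0)$. Comparing this with the $\wt{A}$-equivariance $\cc(\s_{\w_\natural[0]}) = \wchi(\s_{y_{Q,n}}) \cdot \cc(\s_0)$ (available because $\w_\natural[0] - 0 = y_{Q,n} \in Y_{Q,n}$) and using $\cc(\s_0) \neq 0$ immediately yields (\ref{E:A}). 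For sufficiency, assuming (\ref{E:A}), I would normalize $\cc(\s_0) = 1$, define $\cc(\s_{\w[0]}) := \mathbf{T}(\w, 0)$ and extend $\wchi$-equivariantly over $\wt{A}$. The only thing to check is well-definedness: whenever $\w_1[0] \equiv \w_2[0] \bmod Y_{Q,n}$, the two candidate values must agree. Translating this into a statement about $\w' \in W$ with $\w'[y] - y \in Y_{Q,n}$ for $y = \w_1[0]$, it becomes the identity (\ref{E:gen-conA}).

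To reduce (\ref{E:gen-conA}) to a tractable form, I write $\w'[y] - y = \sum_{i=2}^r k_i \alpha_{i,Q,n}^\vee + k_1 y_{Q,n}$; since $\mca{O}_0$ is $Y_{Q,n}^{\sct}$-free, necessarily $k_1 \neq 0$, and after a symmetry argument switching $\w'$ with $\w'^{-1}$ we may assume $k_1 \geq 1$. A straightforward induction on $k_1$ (factoring off a single copy of $\w \w_\natural \w^{-1}$ at each step) reduces us to the case $k_1 = 1$, which after absorbing $\w_1$ becomes the identity (\ref{E:AA}): for all $\w \in W$,
\[
\wchi(\s_{\w\w_\natural[0] - \w[0]}) = \vep^{D(\w\w_\natural[0] - \w[0],\, \w[0])} \cdot \mathbf{T}(\w\w_\natural\w^{-1}, \w[0]).
\]
This is proved by induction on $\ell(\w)$: the base case $\w = 1$ is exactly (\ref{E:A}), and the inductive step passes from $\w$ to $\w_\alpha \w$ ($\alpha \in \Delta$) by setting $x = \w\w_\natural[0] - \w[0] \in Y_{Q,n}$, writing $\angb{x}{\alpha} = k\, n_\alpha$, and expanding both sides: the left side contributes $q^k \cdot \wchi(\s_x)$ after using $\wchi(\wt{h}_\alpha(\varpi^{n_\alpha})) = q^{-1}$, while the right side produces the same factor $q^k \cdot \wchi(\s_x)$ after invoking Lemma~\ref{L:order2} to identify $\mathbf{t}(\w_\alpha, \w_\alpha\w[0])$ with $\mathbf{t}(\w_\alpha, \w[0])^{-1}$ and collecting the Gauss-sum and $\vep$-contributions.

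The main obstacle I expect is precisely this last bookkeeping: tracking the $\vep$-powers produced by the three sources (the translation of $\s_x$ by $-k n_\alpha \alpha^\vee$ on the left; the two $\mathbf{t}$-factors on the right; and the bisector cocycle in the transition from $D(x, \w[0])$ to $D(x, \w_\alpha\w[0])$) and verifying that all bisector contributions cancel modulo $2$. Here the symmetry identity $\vep^{D(y, y')} = \vep^{D(y', y)}$ for $y \in Y_{Q,n}$ listed in the conventions is essential, since it allows one to replace $D(x, -)$ symmetrically whenever $x \in Y_{Q,n}$, which is exactly the case throughout this induction.
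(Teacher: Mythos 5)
Your proposal is correct and follows essentially the same route as the paper: the necessity of (\ref{E:A}) comes from comparing the iterated recursion $\cc(\s_{\w_\natural[0]}) = \mathbf{T}(\w_\natural, 0)\cc(\s_0)$ with $\wt{A}$-equivariance, and sufficiency is proved by verifying (\ref{E:gen-conA}) via reduction to (\ref{E:AA}) and induction on $\ell(\w)$, with Lemma~\ref{L:order2} and the symmetry of $\vep^{D(\cdot,\cdot)}$ on $Y_{Q,n}$ doing exactly the work you anticipate in the bookkeeping.
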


We would like to explicate the condition given by (\ref{E:A}).
\begin{lm} \label{L:explicitA}
One has 
$$\mathbf{T}(\w_\natural, 0)=
\begin{cases}
q^{-r/2} & \text{ if } n \text{ is odd};\\
\vep^{n(n-2)/8} \cdot q^{-n/2} \cdot \g(-n/2)^{-1} & \text{ if } n \text{ is even}.
\end{cases}
$$
\end{lm}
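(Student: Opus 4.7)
The plan is to evaluate the product $\mathbf{T}(\w_\natural, 0)=\prod_{i=1}^r \mathbf{t}(\w_{\alpha_i}, y_{\langle i-1\rangle})$ factor by factor, and then reorganize the resulting product of Gauss sums.

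First I would compute $(y_{\langle i-1\rangle})_\rho$ explicitly. Since $y_{\langle 0\rangle}=0$ and $y_{\langle i\rangle}=\w_{\alpha_i}[y_{\langle i-1\rangle}]$, one has $(y_{\langle i\rangle})_\rho=\w_{\alpha_i}((y_{\langle i-1\rangle})_\rho)$, whence $(y_{\langle i-1\rangle})_\rho=-\w_{\alpha_{i-1}}\cdots\w_{\alpha_1}(\rho)$. Using the identity $\angb{\rho}{\alpha_j}=1$ for every simple root and the Weyl-invariance of $\angb{-}{-}$, a short induction (or direct computation in the $e_i$-coordinates, where $\w_{\alpha_1}\cdots\w_{\alpha_{i-1}}(\alpha_i)=\alpha_1+\cdots+\alpha_i$) gives
$$\angb{(y_{\langle i-1\rangle})_\rho}{\alpha_i}=-i.$$
Since $n_{\alpha_i}=n$ and $Q(\alpha_i^\vee)=1$, we deduce $k_{y_{\langle i-1\rangle},\alpha_i}=\lceil(1-i)/n\rceil=0$ for all $1\le i\le r=n-1$. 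Next, writing $y_{\langle i-1\rangle}=\sum_{j=1}^{i-1}j\alpha_j^\vee$ (which follows from the same induction) and using the lower-triangular bisector $D$ chosen before Proposition \ref{P:cond-A}, for which $D(\alpha_j^\vee,\alpha_i^\vee)=0$ whenever $j<i$, one gets $\GGMA(y_{\langle i-1\rangle},\alpha_i^\vee)=1$. Therefore
$$\mathbf{T}(\w_\natural,0)=q^{-r}\cdot\prod_{i=1}^{r}\g(-i)^{-1}.$$

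The final step is to evaluate $\prod_{i=1}^{n-1}\g(-i)$ using the two standard identities recalled in \S 2.4: the periodicity $\g(k)=\g(k\bmod n)$ and $\g(k)\g(-k)=\vep^k/q$ whenever $n\nmid k$. Pairing $i$ with $n-i$ yields $\g(-i)\g(-(n-i))=\g(-i)\g(i)=\vep^i/q$.

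When $n$ is odd, every $i\in\{1,\dots,n-1\}$ is matched with a distinct partner, giving $(n-1)/2$ pairs and, since $\vep=1$, the contribution $q^{-(n-1)/2}$. Combining with the $q^{-r}$ out front produces $q^{-r/2}$, as claimed. When $n$ is even, the index $i=n/2$ is self-paired and contributes the factor $\g(-n/2)$, while the remaining $n-2$ indices give $(n/2-1)$ pairs contributing $\vep^{\sum_{i=1}^{n/2-1}i}/q^{n/2-1}=\vep^{n(n-2)/8}\cdot q^{1-n/2}$. Inverting and multiplying by $q^{-r}=q^{1-n}$, and using $\vep^2=1$ to drop the sign, produces $\vep^{n(n-2)/8}\cdot q^{-n/2}\cdot\g(-n/2)^{-1}$. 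No step is expected to be difficult; the main bookkeeping is ensuring the parity of $\vep^{n(n-2)/8}$ in the even case, which is handled by the $\vep^2=1$ identity.
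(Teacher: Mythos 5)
Your proof is correct and takes essentially the same route as the paper: both compute $\angb{(y_{\langle i-1\rangle})_\rho}{\alpha_i}=-i$ and $D(y_{\langle i-1\rangle},\alpha_i^\vee)=0$ to reduce each $\mathbf{t}(\w_{\alpha_i},y_{\langle i-1\rangle})$ to $q^{-1}\g(-i)^{-1}$, then pair Gauss sums $\g(-i)$ with $\g(-(n-i))$ using periodicity and the conjugation identity. The only nit is that the identities $\g(k)=\g(k\bmod n)$ and $\g(k)\g(-k)=\vep^k/q$ appear in \S 3.3 (derived from $\overline{G_\psi(a,b)}=\vep^aG_\psi(-a,b)$ and $|\g(k)|=q^{-1/2}$) rather than \S 2.4; the substance is unchanged.
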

\begin{proof}
We compute each $\mathbf{t}(\w_{\alpha_i}, y_{\langle i-1 \rangle})$ for $1\le i\le r$. First, one can check easily $y_{\langle i\rangle}=\sum_{j=1}^i i\cdot \alpha_i^\vee=\alpha_1^\vee + 2\alpha_2^\vee + ... + i\cdot \alpha_i^\vee$. Thus, $\angb{y_{\langle i-1\rangle}}{\alpha_i}=-(i-1)$ and therefore
$$k_{y_{\langle i-1 \rangle}, \alpha_i}=0 \text{ for all } 1\le i\le r.$$
Second, $\GGMA( y_{\langle i-1\rangle}, \alpha_i^\vee)=\vep^{-i\cdot D(y_{\langle i-1 \rangle}, \alpha_i^\vee)}$. Since $D(\alpha_j^\vee, \alpha_i^\vee)=0$ for all $j< i$, we see $\GGMA(y_{\langle i-1\rangle}, \alpha_i^\vee)=1$.
Therefore, $\mathbf{t}(\w_{\alpha_i}, y_{\langle i-1\rangle})=q^{-1} \cdot \g(-i)^{-1}$. Now, if $1\le i, j\le n$ and $i+j=n$, one has
\begin{align*}
& \g(-i)^{-1} \cdot \g(-j)^{-1} \\
= &\g(-i)^{-1} \cdot \big( \wt{\g(j)}\cdot \vep^j \big)^{-1}\\
= & |\g(j)|^{-2} \cdot \vep^i\\
=& q\cdot \vep^i.
\end{align*}
The result then follows from simply multiplying together each term.
\end{proof}
%%%%
\vskip 5pt
\subsubsection{Interlude: Weil-index}
Let $\ggma_\psi$ be the Weil-index given in \S \ref{S:UDC}.

\begin{lm} \label{L:G-W}
Suppose $n=2m$ is an even number. Then the following equality holds:
$$\g(m)=\frac{q^{-1/2}}{\ggma_\psi(\varpi)}.$$
\end{lm}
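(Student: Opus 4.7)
The plan is to evaluate the Gaussian integral
$$I := \int_F \psi^{-1}(\varpi x^2)\, \mu(x)$$
in two ways and compare. Since $n = 2m$ and $p \nmid n$, the residue characteristic is odd. Under $\iota$ the identification $\mu_n^m = \mu_2$ gives $(u,\varpi)_n^m = (u,\varpi)_2$, so $\g(m) = G_{\psi^{-1}}(m,-1)$ is, essentially, a quadratic Gauss sum on the residue field.

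First, I compute $I$ directly. Partitioning $F = \bigsqcup_{j \in \Z} \varpi^j O_F^\times$ and writing $x = \varpi^j v$ gives
$$I = \sum_{j \in \Z} q^{-j} \int_{O_F^\times} \psi^{-1}(\varpi^{2j+1} v^2)\, \mu(v).$$
The $j \geq 0$ terms each equal $q^{-j}(1-q^{-1})$ and sum to $1$. The $j \leq -2$ terms vanish by a standard stationary-phase argument: substituting $v = v_0(1 + \varpi^{-(2j+2)} t)$ with $t \in O_F$ places the quadratic part of $\varpi^{2j+1} v^2$ inside the conductor of $\psi$ while the linear part remains a nontrivial character of $O_F$, which integrates to zero. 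Only $j = -1$ contributes $q \int_{O_F^\times} \psi^{-1}(\varpi^{-1} v^2)\, \mu(v)$. Using that $v \mapsto v^2$ is a $2$-to-$1$ map $O_F^\times \twoheadrightarrow (O_F^\times)^2$ and that the indicator of squares equals $\tfrac{1}{2}(1 + (\cdot,\varpi)_2)$, this integral evaluates to $G_{\psi^{-1}}(0,-1) + \g(m) = -q^{-1} + \g(m)$. Assembling, $I = q\, \g(m)$.

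Second, the defining properties of $\ggma_\psi$ in \S\ref{S:UDC} characterize it (up to the involution $\ggma_\psi \mapsto \ggma_\psi^{-1}$) as the classical Weil index, which satisfies the Gaussian identity $\int_F \psi(\varpi x^2)\, \mu(x) = q^{1/2}\, \ggma_\psi(\varpi)$ with self-dual Haar measure. Since $\psi^{-1} = \overline\psi$ and $\ggma_\psi$ takes values in $\mu_4$, complex-conjugating this identity gives $I = q^{1/2}/\ggma_\psi(\varpi)$. Comparison with $I = q\, \g(m)$ yields $\g(m) = q^{-1/2}/\ggma_\psi(\varpi)$.

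The principal subtlety will be to verify that the paper's normalization of $\ggma_\psi$ in \S\ref{S:UDC} is indeed the one for which the Weil–Gaussian formula for $\psi$ carries $\ggma_\psi(\varpi)$ rather than its inverse; the defining relations alone do not distinguish between the two. Once that convention is matched, the remainder of the argument is a routine tame-case computation that I have sketched above.
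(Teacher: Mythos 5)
Your argument is correct and is a genuine alternative to the paper's proof. Both computations ultimately reduce to the same fact, namely the Weil--Gaussian relation between $\ggma_\psi(\varpi)$ and the normalized quadratic Gauss sum, but the routes differ. The paper proves the lemma ``locally'' by plugging Szpruch's expansion $\ggma_\psi(\varpi u)=q^{-1/2}\big(1+q\int_{O_F^\times}\psi(\varpi^{-1}v^2u)\,\mu(v)\big)$ into the definition $\g(m)=\ggma_\psi(\varpi)^{-1}\int_{O_F^\times}\ggma_\psi(\varpi u)\psi^{-1}(\varpi^{-1}u)\,\mu(u)$, and then evaluates the resulting double integral over $\{|1-v^2|=1\}$ and $\{|1-v^2|\le q^{-1}\}$ using further lemmas from Szpruch's thesis. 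You instead integrate over all of $F$, and the decomposition $F=\bigsqcup_j\varpi^jO_F^\times$ with the stationary-phase vanishing for $j\le -2$ is clean and self-contained: your verification that $I=q\,\g(m)$ needs nothing beyond the conductor of $\psi$, the indicator identity $\mathbbm{1}_{(O_F^\times)^2}=\frac12(1+(\cdot,\varpi)_2)$, and the table of values for $G_\psi$. This buys a more elementary argument that avoids black-boxing Szpruch's Lemmas 8.6 and 8.9.

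The one real point of contention is the one you flag yourself: the axioms quoted in \S\ref{S:UDC} are invariant under $\ggma_{\psi}\mapsto\ggma_\psi^{-1}$, so they do not determine which of the two is meant, and the statement of the lemma is sensitive to the choice (it asserts $\g(m)\cdot\ggma_\psi(\varpi)=q^{-1/2}$ rather than $\g(m)=q^{-1/2}\ggma_\psi(\varpi)$). Your argument therefore needs an external normalization -- the Gaussian identity $\int_F\psi(\varpi x^2)\,\mu(x)=q^{1/2}\ggma_\psi(\varpi)$ for the paper's $\ggma_\psi$ -- whereas the paper instead imports the normalization implicitly through Szpruch's formula (3.7). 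These are equivalent: setting $u=1$ in Szpruch's formula and repeating your $j=-1$ computation for $\psi$ in place of $\psi^{-1}$ gives precisely $\ggma_\psi(\varpi)=q^{1/2}\,\mathbf{g}_\psi(m)$, which after partitioning $F$ is exactly your Gaussian identity. So either cite the standard Weil--Gaussian formula with the same source conventions (Weil, Rao, or Szpruch) to fix the sign, or derive it once from Szpruch's (3.7) as above; with that supplied, your proof closes cleanly. The remaining details (stationary phase at depth $d=k-1$ for $k=-(2j+1)\ge 3$, the $2$-to-$1$ substitution at $j=-1$, and the identity $(u,\varpi)_n^m=(u,\varpi)_2$) all check out.
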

\begin{proof}
By definition, $\g(m)$ is equal to
\begin{align*}
& \int_{O_F^\times} (u, \varpi)_2 \cdot \psi^{-1}(\varpi^{-1} u) \mu(u) \\
=&  \int_{O_F^\times} \ggma_\psi(\varpi u) \ggma_\psi(\varpi)^{-1} \ggma_\psi(u)^{-1}  \cdot \psi^{-1}(\varpi^{-1} u) \mu(u) \\
=&  \ggma_\psi(\varpi)^{-1}\cdot \int_{O_F^\times} \ggma_\psi(\varpi u) \cdot \psi^{-1}(\varpi^{-1} u) \mu(u) .
\end{align*}
%First, $\ggma_\psi(\varpi^{-1}) \ggma_\psi(\varpi) \cdot (\varpi, \varpi)_2=1$ gives
%$$\ggma_\psi(\varpi^{-1}) \ggma_\psi(\varpi)^{-1}= \ggma_\psi(\varpi^{-1})^2 \cdot (\varpi, \varpi)_2=1$$

However, by Equality (3.7) of \cite[Lemma 3.2]{Szp1},
$$\ggma_\psi(\varpi u)=q^{-1/2}\Big(1+ q \int_{O_F^\times} \psi(\varpi^{-1} v^2 u)\mu(v) \Big).$$

Thus,
\begin{align*}
&\g(m) \\
=& q^{-1/2} \cdot \ggma_\psi(\varpi)^{-1}  \cdot \int_{O_F^\times}\Big(1+ q \int_{O_F^\times} \psi(\varpi^{-1} v^2 u)\mu(v)\Big) \psi^{-1}(\varpi^{-1} u) \mu(u) \\
=&  q^{-1/2} \cdot \ggma_\psi(\varpi)^{-1}  \cdot \Big(-1/q + q\cdot \int_{O_F^\times} \int_{O_F^\times} \psi \big(\varpi^{-1} u (v^2-1) \big) \mu(u) \mu(v) \Big) 
\end{align*}

Let $D=\{v\in O_F^\times: |1-v^2|=1\}$ and $H=\{v\in O_F^\times: |1-v^2|\le q^{-1}\}$. We get
\begin{align*}
&\int_{O_F^\times} \Big( \int_{O_F^\times} \psi \big(\varpi^{-1} u (v^2-1) \big) \mu(u) \Big) \mu(v) \\
=&\int_{v \in H} \int_{O_F^\times} \psi \big(\varpi^{-1} u (v^2-1) \big) \mu(u) \mu(v) +\int_{v\in D} \int_{O_F^\times} \psi \big(\varpi^{-1} u (v^2-1) \big) \mu(u) \mu(v) \\
=&\mu(H) \cdot (1-q^{-1}) + \mu(D) \cdot (-q^{-1}) \text{ by (8.19) of \cite[Lemma 8.6]{Szp1} }\\
=& 2q^{-1} \cdot (1-q^{-1}) + (1-3q^{-1}) \cdot (-q^{-1}) \text{ by \cite[Lemma 8.9]{Szp1} } \\
=& q^{-1} + q^{-2}.
\end{align*}
The result follows easily by simplification.
\end{proof}

\subsubsection{An explicit criterion}
Consider the \emph{unitary} distinguished character $\wchi^0_{\psi'}$ constructed in \cite{GG}, which we recalled and gave in formula (\ref{UDC}). Then the character $\wchi_{\psi'}=\wchi^0_{\psi'} \cdot \delta_B(\cdot)^{\frac{1}{2n}}$ is an exceptional character. In the simply-connected case, $J=Y_{Q,n}^{\sct}$. For the definition of $\wchi^0_{\psi'}$, we pick a basis $\set{y_i}$ for $Y_{Q,n}$ such that $\set{k_iy_i}$ is a basis for $J=Y_{Q,n}^{\sct}$. Then by definition,
$$\wchi^0_{\psi'} (\s_{y_i})=\ggma_{\psi'}(\varpi)^{2(k_i-1)Q(y_i)/n}$$
and, for $y=\sum_i n_i y_i \in Y_{Q,n}$, one has
$$\wchi^0_{\psi'} (\s_y)=\prod_i \wchi^0_{\psi'}(\varpi^{n_i})^{2(k_i-1)Q(y_i)/n} \cdot \vep^{\sum_{i<j} n_i n_j D(y_i, y_j)}.$$
For the covering group $\wt{\SL}_{n}^{(n)}$, we take $y_i=n\alpha_i^\vee, 2\le i\le r$ and $y_1=y_{Q,n}$, with $k_i=1$ for $2\le i\le r$ and $k_1=n$.

An easy computation shows $Q(y_{Q,n})=r(r+1)/2$, and thus
\begin{equation} \label{Dis-char}
\begin{aligned}
& \wchi_{\psi'}(\s_{y_{Q,n}}) \\
=& \wchi^0_{\psi'}(\s_{y_{Q,n}}) \cdot \delta_B(\s_{y_{Q,n}})^{\frac{1}{2n}} \\
=& \ggma_{\psi'}(\varpi)^{(n-1)^2} \cdot q^{-(n-1)/2}.
\end{aligned}
\end{equation}

\begin{prop}
For the exceptional character $\wchi_{\psi'}=\wchi_{\psi'}^0 \cdot \delta_B(\cdot)^{\frac{1}{2n}}$ given above, one has $\dim \Wh(\Theta(\wt{\SL}_{n}^{(n)}, \wchi_{\psi'}))=1$ if and only if the following holds:
$$\begin{cases}
\text{any } \psi', & \text{ if $n$ is odd};\\
\ggma_{\psi'}(\varpi)=\ggma_{\psi}(\varpi), & \text{ if $n\equiv 0, 2 \mod 8$};\\
%\ggma_{\psi'}(\varpi)=\ggma_{\psi}(\varpi), & \text{ if $n\equiv 2 \mod 8$};\\
\ggma_{\psi'}(\varpi)=(-1, \varpi)_4\cdot \ggma_{\psi}(\varpi), & \text{ if $n \equiv 4 \mod 8$};\\
\ggma_{\psi'}(\varpi)=\ggma_{\psi}(\varpi)^{-1}, & \text{ if $n\equiv 6 \mod 8$}.
\end{cases}$$
\end{prop}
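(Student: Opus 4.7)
The plan is to invoke Proposition \ref{P:cond-A} and reduce distinguishedness to the single scalar identity $\wchi_{\psi'}(\s_{y_{Q,n}}) = \mathbf{T}(\w_\natural, 0)$, simplify each side, and then match the resulting uniform identity with the four residue classes of $n \bmod 8$. By (\ref{Dis-char}) the left-hand side equals $\ggma_{\psi'}(\varpi)^{(n-1)^2}q^{-(n-1)/2}$, and the right-hand side is provided by Lemma \ref{L:explicitA}.

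For $n$ odd, $(n-1)^2$ is divisible by $4$ and $\ggma_{\psi'}(\varpi)^4 = (\varpi,\varpi)_2^2 = 1$, so the left-hand side collapses to $q^{-(n-1)/2}$, matching the right-hand side for every $\psi'$. For $n$ even I would first rewrite $\g(-n/2)^{-1}$: Lemma \ref{L:G-W} gives $\g(n/2) = q^{-1/2}/\ggma_\psi(\varpi)$, and the conjugation rule $\overline{G_\psi(a,b)}=\vep^a G_\psi(-a,b)$, together with $\overline{\ggma_\psi(\varpi)}=\ggma_\psi(\varpi)^{-1}$, yields $\g(-n/2)^{-1} = \vep^{n/2}q^{1/2}\ggma_\psi(\varpi)^{-1}$. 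Substituting into Lemma \ref{L:explicitA} and simplifying $n(n-2)/8 + n/2 = n(n+2)/8$ gives $\mathbf{T}(\w_\natural, 0) = \vep^{n(n+2)/8}q^{-(n-1)/2}\ggma_\psi(\varpi)^{-1}$. On the left side, since $(n-1)^2 \equiv 1 \pmod 4$ for $n$ even and $\ggma_{\psi'}(\varpi)^4 = 1$, one has $\ggma_{\psi'}(\varpi)^{(n-1)^2} = \ggma_{\psi'}(\varpi)$, and the distinguishedness condition collapses to
$$\ggma_{\psi'}(\varpi) \;=\; \vep^{n(n+2)/8}\cdot\ggma_\psi(\varpi)^{-1}.$$

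Since $(-1,\varpi)_n^2 = 1$ forces $\vep \in \mu_2$, only the parity of the exponent $n(n+2)/8 = \binom{m+1}{2}$ (writing $n=2m$) matters, and a direct parity check shows it is even exactly for $m \equiv 0, 3 \pmod 4$, i.e.\ for $n \equiv 0, 6 \pmod 8$, and odd for $n \equiv 2, 4 \pmod 8$. The translation to the four cases of the proposition then uses the following identities, extracted from $(-1,\varpi)_d = (-1,\varpi)_n^{n/d}$ for $d \mid n$ together with $\ggma_\psi(\varpi)^2 = (\varpi,\varpi)_2 = (-1,\varpi)_2$: for $n \equiv 2 \pmod 8$, $n/2$ is odd, so $\vep = (-1,\varpi)_2 = \ggma_\psi(\varpi)^2$ and $\vep\ggma_\psi(\varpi)^{-1}$ reduces to $\ggma_\psi(\varpi)$; for $n \equiv 4 \pmod 8$, $n/4$ is odd, so $\vep = (-1,\varpi)_4$, while $\mu_4 \subset F^\times$ forces $\ggma_\psi(\varpi) \in \mu_2$ and hence $\ggma_\psi(\varpi)^{-1} = \ggma_\psi(\varpi)$; for $n \equiv 0 \pmod 8$ the same inclusion $\mu_4 \subset F^\times$ again gives $\ggma_\psi(\varpi)^{-1} = \ggma_\psi(\varpi)$; and for $n \equiv 6 \pmod 8$ no further simplification is possible, so one keeps $\ggma_\psi(\varpi)^{-1}$ as is. I expect the main obstacle to be not any individual step but the systematic bookkeeping of signs: each $\vep$, each Gauss-sum conjugation, and each invocation of the identities relating $\ggma_\psi(\varpi)$ to $(-1,\varpi)_2$ and $(-1,\varpi)_4$ must be tracked with care, and the final case-matching demands precisely identifying which variant of the Hilbert symbol is forced by the residue of $n$ modulo $8$.
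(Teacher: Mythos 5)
Your proposal is correct and follows essentially the paper's own route: reduce to the scalar identity via Proposition \ref{P:cond-A}, evaluate both sides using (\ref{Dis-char}) and Lemma \ref{L:explicitA}, and simplify $\g(-n/2)^{-1}$ via Lemma \ref{L:G-W} together with the Gauss-sum conjugation and the standard Weil-index and Hilbert-symbol identities. The only difference is cosmetic—you derive the uniform identity $\ggma_{\psi'}(\varpi)=\vep^{n(n+2)/8}\ggma_\psi(\varpi)^{-1}$ before splitting into residue classes mod $8$, whereas the paper branches into $n=4k+2$ and $n=4k$ immediately after applying Lemma \ref{L:G-W}.
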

\begin{proof}
By the value of $\wchi_{\psi'}(\s_{y_{Q,n}})$ in (\ref{Dis-char}), it follows from Lemma \ref{L:explicitA} that the equality (\ref{E:A}) is equivalent to
\begin{equation} \label{exp-psi}
\ggma_{\psi'}(\varpi)^{(n-1)^2} \cdot q^{-(n-1)/2}=
\begin{cases}
q^{-r/2} & \text{ if } n \text{ is odd};\\
(-1, \varpi)_n^{n(n-2)/8} \cdot q^{-n/2} \cdot \g(-n/2)^{-1} & \text{ if } n \text{ is even}.
\end{cases}
\end{equation}
For $n$ odd, the equality holds for any $\psi'$. Now we assume $n$ even.

For $n=4k +2$, by Lemma \ref{L:G-W}, the required equality in (\ref{exp-psi}) becomes
$$\ggma_{\psi'}(\varpi)=\ggma_{\psi}(\varpi)^{2k+1}.$$
In particular, if $k$ is even, it is equivalent to $\ggma_{\psi'}(\varpi)=\ggma_{\psi}(\varpi)$. If $k$ is odd, it is equivalent to $\ggma_{\psi'}(\varpi)=\ggma_{\psi}(\varpi)^{-1}$.

For $n=4k$, applying Lemma \ref{L:G-W} again, the equality in (\ref{exp-psi}) reads
$$\ggma_{\psi'}(\varpi)=(-1, \varpi)_n^k \cdot \ggma_{\psi}(\varpi)=(-1, \varpi)_4 \cdot \ggma_{\psi}(\varpi).$$
A special case is when $k$ is even. In this case $(-1, \varpi)_4=1$ and therefore it is equivalent to $\ggma_{\psi'}(\varpi)=\ggma_{\psi}(\varpi)$.
\end{proof}

\begin{cor} \label{C:AC01}
Consider $\psi'=\psi_a$ for some $a\in F^\times$. Assume that $\psi_a$ has conductor $O_F$, i.e. $a\in O_F^\times$. Then $\dim \Wh(\Theta(\wt{\SL}_n^{(n)}, \wchi_{\psi_a}))=1$ if and only if the following holds:
$$\begin{cases}
a\in O_F^\times, & \text{ if $n$ is odd};\\
a\in (O_F^\times)^2, & \text{ if $n\equiv 0, 2 \mod 8$};\\
% \ggma_{\psi'}(\varpi)=\ggma_{\psi}(\varpi), & \text{ if $n\equiv 2 \mod 8$};\\
a^2\in - (O_F^\times)^4 , & \text{ if $n \equiv 4 \mod 8$};\\
a\in - (O_F^\times)^2, & \text{ if $n\equiv 6 \mod 8$}.
\end{cases}$$
\end{cor}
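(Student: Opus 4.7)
The plan is to translate each case of the preceding proposition from an equality of Weil indices into a coset condition on $a$, using the twist formula $\ggma_{\psi_a}(b)=\ggma_\psi(b)\cdot(a,b)_2$ from \S\ref{S:UDC}. Specializing to $b=\varpi$ gives
\[
\ggma_{\psi_a}(\varpi)=\ggma_\psi(\varpi)\cdot(a,\varpi)_2,
\]
so each criterion of the proposition reduces to a prescribed value of $(a,\varpi)_2$. For $a\in O_F^\times$, and $p$ odd (which is automatic whenever $2\mid n$ since $p\nmid n$), the tame quadratic Hilbert symbol satisfies $(a,\varpi)_2=1\iff a\in(O_F^\times)^2$ via the isomorphism $O_F^\times/(O_F^\times)^2\simeq\F_q^\times/(\F_q^\times)^2$; this will serve as the dictionary between Hilbert symbols and cosets throughout.

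With this setup the $n$ odd case is vacuous, and for $n\equiv 0,2\pmod{8}$ the proposition demands $(a,\varpi)_2=1$, which is immediately $a\in(O_F^\times)^2$. For $n\equiv 6\pmod{8}$ the proposition requires $\ggma_{\psi_a}(\varpi)=\ggma_\psi(\varpi)^{-1}$, i.e.\ $(a,\varpi)_2=\ggma_\psi(\varpi)^{-2}$. I would simplify the right hand side using $\ggma_\psi(\varpi)^2=(\varpi,\varpi)_2$ together with the Steinberg-type identity $(\varpi,-\varpi)_2=1$, which yields $(\varpi,\varpi)_2=(-1,\varpi)_2$; since this symbol is $\pm 1$, its inverse equals itself. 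The condition then becomes $(-a,\varpi)_2=1$, equivalently $a\in-(O_F^\times)^2$.

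The remaining case $n\equiv 4\pmod{8}$ is the one requiring a small additional Hilbert-symbol identity, and I expect this to be the main obstacle. Since $4\mid n$ we have $\mu_4\subseteq F^\times$; fix $i\in O_F^\times$ with $i^2=-1$. The proposition asks for $(a,\varpi)_2=(-1,\varpi)_4$, and I would rewrite the right hand side via bimultiplicativity and the reduction formula $(x,y)_4^2=(x,y)_2$ as
\[
(-1,\varpi)_4=(i^2,\varpi)_4=(i,\varpi)_4^2=(i,\varpi)_2.
\]
The condition thus becomes $(a/i,\varpi)_2=1$, i.e.\ $a\in i\cdot(O_F^\times)^2$. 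A short bookkeeping step then identifies the membership $a\in i\cdot(O_F^\times)^2$ with $a^2\in -(O_F^\times)^4$: the forward direction is a direct squaring $a=ib^2\Rightarrow a^2=-b^4$, while the converse extracts a square root of $-1$ from $a^2=-c^4$, which is available as $\pm i\in O_F^\times$, forcing $a=\pm i c^2\in i\cdot(O_F^\times)^2$ (absorbing the sign using $-1=i^2$).
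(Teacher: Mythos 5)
Your argument is correct and is exactly the natural derivation of Corollary~\ref{C:AC01} from the preceding proposition, which the paper states without proof as an immediate consequence. You correctly invoke the twist rule $\ggma_{\psi_a}(\varpi)=\ggma_\psi(\varpi)(a,\varpi)_2$, the fact that $2\mid n$ together with $p\nmid n$ forces $p$ odd so that $(a,\varpi)_2=1\iff a\in(O_F^\times)^2$, the identity $\ggma_\psi(\varpi)^2=(\varpi,\varpi)_2=(-1,\varpi)_2$, and, in the $n\equiv 4\pmod 8$ case, the reduction $(-1,\varpi)_4=(i,\varpi)_4^2=(i,\varpi)_2$ (valid since $\mu_4\subseteq F^\times$) followed by the observation that $a\in i\,(O_F^\times)^2\iff a^2\in-(O_F^\times)^4$, using $-1=i^2\in(O_F^\times)^2$ to absorb the sign ambiguity.
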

%%%%%%%%%%%%%%%%%
\vskip 5pt

\begin{rmk}
As the referee points out, the fact that for any exceptional representation  $\Theta(\wt{\SL}_{n}^{(n)}, \wchi)$ there exists $\psi$ such that it is $\psi$-generic and that $\dim \Wh (\Theta(\wt{\SL}_{n}^{(n)}, \wchi))\le 1$ for all $\psi$ also follows from the work of \cite{KP} on $\wt{\GL}_n^{(n)}$ combined with the relation between $\wt{\SL}_n^{(n)}$ and $\wt{\GL}_n^{(n)}$ given by Adams in \cite{Ada}. However, our Corollary \ref{C:AC01} above gives precise information for the matching between $\psi$ and the distinguished theta representation in terms of the distinguished character.
\end{rmk}

\begin{eg} \label{E:SL2}
The first nontrivial example is the metaplectic covering $\wt{\SL}_2^{(2)}$. In this case, we have $Y_{Q,n}=Y=\Z \cdot \alpha^\vee$ and $Y_{Q,n}^{\sct}=\Z \cdot (2\alpha^\vee)$. As mentioned at the beginning of \S \ref{S:A-delic}, one has that the lower and upper bounds in Theorem \ref{T:LUB} are 0 and 1 respectively and thus
$$0\le \dim \Wh(\Theta(\wt{\SL}_2^{(2)}, \wchi)) \le 1$$
for any exceptional $\wchi$. For the character $\psi_a$, the representation $\Theta(\wt{\SL}_2^{(2)}, \wchi_{\psi_a})$ is the even Weil representation in the following exact sequence:
%$$\begin{tikzcd}
%\text{St}(\wchi_{\psi_a}) \ar[r, hook] & I(\wchi_{\psi_a}) \ar[r, two heads] & \Theta(\wt{\SL}_2^{(2)}, \wchi_{\psi_a}),
%\end{tikzcd} $$
$$\xymatrix{
\text{St}(\wchi_{\psi_a}) \ar@{^(->}[r] & I(\wchi_{\psi_a}) \ar@{>>}[r]  & \Theta(\wt{\SL}_2^{(2)}, \wchi_{\psi_a}),
} $$
where $\text{St}(\wchi_{\psi_a})$ is the metaplectic analogue of the Steinberg representation. Corollary \ref{C:AC01} above recovers the well-known fact, which follows from the work of Gelbart and Piatetski-Shapiro \cite{GePS},  that for $\wt{\SL}_2^{(2)}$ the even Weil representation $\Theta(\wt{\SL}_2^{(2)}, \wchi_{\psi_a})$ (for unramified data) is $\psi$-generic if and only if $a\in (O_F^\times)^2$. We note that this also follows directly from the computation of the local coefficient for $\wt{\SL}_2^{(2)}$ in \cite{Szp3}.
\end{eg}

\begin{eg}
We also discuss explicitly the example $\wt{\SL}_3^{(3)}$. Consider $\wt{\SL}_3^{(3)}$ with cocharacter lattice $Y=\langle \alpha_1^\vee, \alpha_2^\vee \rangle$. Consider $Q$ such that $Q(\alpha^\vee_i)=1$. Then
$$Y_{Q,n} =\langle 2\alpha_1^\vee + \alpha_2^\vee, 3\alpha_1^\vee \rangle = \langle 2\alpha_2^\vee + \alpha_1^\vee, 3\alpha_2^\vee \rangle.$$
Note $Y=\langle 2\alpha_1^\vee + \alpha_2^\vee, \alpha_1^\vee \rangle = \langle 2\alpha_2^\vee + \alpha_1^\vee, \alpha_2^\vee \rangle$. We know $\rho=\alpha_1^\vee + \alpha_2^\vee$. For $y=0$ one has
$$y_\rho=0_\rho= -(\alpha_1^\vee + \alpha_2^\vee).$$
Consider $\w_\natural=\w_{\alpha_1} \w_{\alpha_2}$, then $\w_{\alpha_2}[y]=\alpha_2^\vee$ and moreover $\w_{\alpha_1} \w_{\alpha_2}[y]= 2\alpha^\vee_1 +\alpha_2^\vee$. One has
\begin{align*}
& \cc(\s_{\w_1\w_2[y]}) \\
=& q^{k_{\w_2[y],\alpha_1} -1} \cdot \GGMA(\w_2[y], \alpha_1^\vee) \cdot \g(Q(\alpha_1^\vee)(\angb{\w_2[y]}{\alpha_1}-1))^{-1} \\
& \qquad \cdot q^{k_{y,\alpha_2} -1} \cdot \GGMA(y, \alpha_2^\vee) \cdot \g(Q(\alpha_2^\vee)(\angb{y}{\alpha_2}-1))^{-1} \cdot \cc(\s_y)\\
=&q^{\ceil{\frac{\angb{\alpha_2^\vee}{\alpha_1}}{3}} + \ceil{\frac{\angb{y}{\alpha_2}}{3}} -2} \cdot \GGMA(\alpha_2^\vee, \alpha_1^\vee) \cdot \GGMA(0, \alpha_2^\vee) \cdot \g(-2)^{-1} \g(-1)^{-1} \cdot \cc(1_{\wt{\SL}_3^{(3)}}) \\
=& q^{-2} \cdot q \cdot \cc(1_{\wt{\SL}_3^{(3)}})= q^{-1},
\end{align*}
where $\cc$ is normalized to take value 1 at the $1\in \wt{\SL}_3^{(3)}$. This implies that necessarily $ \cc(\s_{\w_1\w_2[y]}) =q^{-1}$, and thus
$$\wchi(\s_{\w_1\w_2[y]})=q^{-1}.$$
Note, this is not a consequence of $\wchi$ being exceptional, although it is compatible. Clearly, an exceptional character $\wchi$ is such that
$$\begin{cases}
\wchi(\s_{\w_1\w_2[y]})^3 =q^{-3}, \\
\wchi(\s_{3\alpha_1^\vee})=q^{-1}.
\end{cases} $$
In particular, if $\wchi(\s_{\w_1\w_2[y]})=\zeta \cdot q^{-1}$ for some third root of unity $\zeta \ne 1$, then $\dim \Wh(\Theta(\wt{\SL}_3^{(3)}, \wchi))=0$ for such $\wchi$.
\end{eg}

\subsection{Case III: $\wt{\SL}_{r+1}^{(n)}, n=r+2$} For $n=r+2$, we show $Y_{Q,n}=Y_{Q,n}^{\sct}$ and therefore Corollary \ref{C:MC} applies. Pick any $(y_1, y_2, ..., y_{r+1})\in Y_{Q,n}$, we have
$$a\equiv y_1 \equiv y_2 \equiv ... \equiv y_{r+1} \mod n,$$
where $a\in \set{0, 1, 2, ..., r+1}$. Write $y_i=k_i n + a$. Since $\sum_{i=1}^{r+1} y_i=0$, one has
$$n\cdot \big(\sum_{i=1}^{r+1} k_i\big) + (r+1)\cdot a=0.$$
In particular, $n| (r+1)a$. However, $\text{gcd}(n, r+1)=1$, therefore $n|a$ and $a=0$. That is, $Y_{Q,n}=Y_{Q,n}^{\sct}$ and therefore $\dim \Wh(\Theta(\wt{\SL}_{r+1}^{(r+2)}, \wchi))=\val{\wp_{Q,n}(\OF_{Q,n})}$. Note that, the equality $Y_{Q,n}=Y_{Q,n}^{\sct}$ reflects the fact that the dual group for $\wt{\SL}_n^{(n+1)}$ is $\text{PGL}_n$ (cf. \cite[\S 2.7.2]{We2}).

We claim that the dimension is equal to 1 in this case. Let $\mca{O}_y \in \OF_{Q,n,\sct}$ be a $Y_{Q,n}^{\sct}$-free orbit with $\ii_\TA^*(y_\rho)=(0, 1, ..., r-1, r) \in \bigoplus_{i=1}^{r+1} \Z e_i$. We know that $\mca{O}_y$ is $Y_{Q,n}$-free (or equally, $Y_{Q,n}^{\sct}$-free). Moreover, one can check easily that $\wp_{Q,n}(\OF_{Q,n})=\set{\wp_{Q,n}(\mca{O}_y)}$. Therefore $\text{dim} \Wh(\Theta(\wt{\SL}_{r+1}^{(r+2)}, \wchi))=1$ for the unique exceptional character $\wchi$ in this case.

\subsection{Case IV: $\wt{\SL}_{r+1}^{(n)}, n\ge r+3$}
\begin{lm} \label{L:A01}
Consider $y\in Y$ such that $\ii_\TA^*(y_\rho)=(x_1^*, x_2^*, ..., x_r^*, x_{r+1}^*)$ with $x_i^*=i-1$. If $n\ge r+3$, the orbit  $\mca{O}_y$ is $Y_{Q,n}$-free.
\end{lm}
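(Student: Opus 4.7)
\medskip

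\noindent\textbf{Proof proposal.} The plan is to translate the freeness condition on $\mca{O}_y$ entirely into combinatorics on the symmetric group $S_{r+1}$, and then show that the constraint $n\ge r+3$ forces the only relevant permutation to be the identity.

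First, I would use the embedding $\ii_\TA$ to make the Weyl action transparent. Since $W=S_{r+1}$ acts by permuting the coordinates $e_1,\dots,e_{r+1}$, for any $\w\in W$ corresponding to $\sigma\in S_{r+1}$, the difference $\ii_\TA(\w[y]-y)=\ii_\TA(\w(y_\rho)-y_\rho)$ has $i$-th coordinate equal to $x^*_{\sigma^{-1}(i)}-x^*_i$. With $x_i^*=i-1$ this simplifies: setting $\tau:=\sigma^{-1}$, the tuple becomes $\big(\tau(1)-1,\tau(2)-2,\dots,\tau(r+1)-(r+1)\big)$. The condition $\w[y]-y\in Y_{Q,n}$ is now that all coordinates are congruent to each other modulo $n$ (the condition $\sum_i(\tau(i)-i)=0$ is automatic).

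Next, write $a_i:=\tau(i)-i$; one has $|a_i|\le r$ and we require $a_i\equiv a_j\pmod n$ for all $i,j$. Since $n\ge r+3$, at most two integers in $[-r,r]$ can be congruent to any fixed residue class mod $n$, and any two such integers differ by exactly $n$; in particular they cannot both be positive (since the smaller one would need $\le r-n\le -3$). So there is a single common value $\alpha\le -3\le 0$ (with $\alpha\ge -r$) and possibly a second value $\beta=\alpha+n\in[n-r,r]$. If only one value occurs, then $(r+1)\alpha=\sum a_i=0$ forces $\alpha=0$, hence $\tau=\mathrm{id}$ and we are done.

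The main step is ruling out the two-value case $\{\alpha,\beta\}$ with $\alpha\le r-n$ and $\beta=\alpha+n\ge n-r$. Partition $\{1,\dots,r+1\}=I\sqcup J$ according to whether $a_i=\alpha$ or $a_i=\beta$. The requirements $1\le \tau(i)\le r+1$ impose
\[
I\subseteq\{1-\alpha,\,2-\alpha,\,\dots,\,r+1\},\qquad J\subseteq\{1,\,2,\,\dots,\,r+1-\alpha-n\},
\]
which have sizes $r+1+\alpha$ and $r+1-\alpha-n$ respectively. Their total size is $2r+2-n$. But $I\sqcup J$ must equal $\{1,\dots,r+1\}$, forcing $2r+2-n\ge r+1$, i.e.\ $n\le r+1$, contradicting $n\ge r+3$. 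Hence the two-value case is impossible, $\tau=\mathrm{id}$, and $\mca{O}_y$ is $Y_{Q,n}$-free.

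The bookkeeping between $\sigma$ and $\tau=\sigma^{-1}$ and the interval-counting are the only technical points; I expect the main conceptual obstacle is simply recognising that the uniform residue condition, together with the bounded range $[-r,r]$, reduces everything to an elementary combinatorial impossibility when $n\ge r+3$.
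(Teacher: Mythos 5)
Your proposal is correct and follows essentially the same route as the paper: identify $W$ with $S_{r+1}$ acting on coordinates, observe that the differences $a_i$ lie in $[-r,r]$ so the common residue class mod $n$ admits at most two representatives $\alpha$ and $\beta=\alpha+n$, and then run a pigeonhole count on the index slots available for each level to force $n\le r+1$. Your slot-counting of $|I|$ and $|J|$ is the complementary dual of the paper's step of exhibiting an $i_0$ in $(r+1-t,\,1+s)$ with no $\w$-preimage; both encode the same inequality $s+t\le r+1$, and yours is in fact the slightly cleaner and more airtight phrasing.
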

\begin{proof}
Suppose not, there exists $\w\ne 1$ such that $\w[y]-y\in Y_{Q,n}$. Identify $\w$ with a permutation, then we have
$$(x_1^*, x_2^*..., x_{r+1}^*) - (x_{\w(1)}^*, x_{\w(2)}^*..., x_{\w(r+1)}^*) \in Y_{Q,n}.$$
More precisely, $i-\w(i) \equiv j-\w(j) \text{ mod } n$ for all $i, j$. Clearly, $n\nmid (i-\w(i))$ for all $i$, otherwise one can deduce $\w(i)=i$ for all $i$ and therefore $\w=1$. That is, $(i-\w(i))$ is either negative or positive. We reorder the terms $(i-\w(i))$'s as
$$-r\le (i_1-\w(i_1)) \le (i_2-\w(i_2)) \le ... < 0 < ...   \le (i_r -\w(i_r)) \le  (i_{r+1} -\w(i_{r+1})) \le r.$$
Write $(i_{1}-\w(i_1))=-s, s\in \N$ and $(i_{r+1}-\w(i_{r+1}))=t, t\in \N$. It is easy to see that any negative $i-\w(i)$ must be equal to $-s$, and any positive $i-\w(i)$ must be equal to $t$.

We claim that $2< t+s \le r+1$ and therefore $n\nmid (t+s)$, i.e. $\w[y] -y \notin Y_{Q,n}$ for all $\w\ne 1$. Note $0-\w(0)=-s$ and $r-\w(r)=t$. Suppose $t+s > r+1$, then there exists $i_0$ such that $r+1-t< i_0 < 1+s$. However, there exists no $i'$ such that $\w(i')=i_0$. This is a contradiction, and the claim follows.

Therefore $\mca{O}_y$ is $Y_{Q,n}$-free for the given $y$. The proof is completed.
\end{proof}

It follows that $\text{dim} \Wh(\Theta(\wt{\SL}_{r+1}^{(n)}, \wchi)) \ge 1$ for $n\ge r+3$. In principle, one could proceed as in \S \ref{S:A-delic} to analyze every element in $\wp_{Q,n}(\OF_{Q,n,\sct})$ and determine completely $\text{dim} \Wh(\Theta(\wt{\SL}_{r+1}^{(n)}, \wchi))$ in this case. However, the level of complexity of the computation depends inevitably on (the center of) the dual group of $\wt{\SL}_r^{(n)}$ and could be quite involved for general $n\ge r+3$.

We summarize for the $n\le r+2$ cases below.
\begin{thm} \label{T:A}
Consider the Brylinski-Deligne covering $\wt{\SL}_{r+1}^{(n)}, n\le r+2$ with $Q(\alpha^\vee)=1$ for all coroots $\alpha^\vee$. Let $\wchi$ be an exceptional character of $\wt{\SL}_{r+1}^{(n)}$. Then $\dim \Wh(\Theta(\wt{\SL}_{r+1}^{(n)}, \wchi))=1$ if and only if 
\begin{enumerate}
\item[$\bullet$] $n=r+2$ and $\wchi$ is the only exceptional character, or
\item[$\bullet$] $n=r+1$ and $\wchi$ is the unique exceptional character satisfying (\ref{E:A}).
\end{enumerate}
\end{thm}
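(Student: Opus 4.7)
The plan is to establish the theorem by treating the three ranges $n\le r$, $n=r+1$, and $n=r+2$ separately, since each case has already been set up through the preceding discussion. In all three cases, I would use the explicit description of $Y_{Q,n}$ and $Y_{Q,n}^{\sct}$ inside $\bigoplus_{i=1}^{r+1}\Z e_i$ given by the embedding $\ii_\TA$, together with Theorem \ref{T:LUB}, Corollary \ref{C:MC}, and Proposition \ref{P:cond-A}.

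For $n\le r$, I would first show $\OF_{Q,n,\sct}=\emptyset$: given any $y\in Y$, the vector $\ii_\TA^*(y_\rho)=(x_1^*,\ldots,x_{r+1}^*)$ has $r+1>n$ integer coordinates, so by the pigeonhole principle some pair $x_i^*\equiv x_j^*\bmod n$ with $i\ne j$. The transposition $\w_{(ij)}$ then satisfies $\w[y]-y\in Y_{Q,n}^{\sct}$, so $\mca{O}_y$ is not $Y_{Q,n}^{\sct}$-free. Theorem \ref{T:LUB} then forces $\dim\Wh(\Theta(\wt{\SL}_{r+1}^{(n)},\wchi))=0$, so there is no distinguished $\wchi$.

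For $n=r+1$, the lattice computations already in the text show $\val{\wp_{Q,n}(\OF_{Q,n})}=0$ and $\val{\wp_{Q,n}(\OF_{Q,n,\sct})}=1$, represented by the unique orbit of $y=0$ whose image $\ii_\TA^*(0_\rho)=(0,1,\ldots,r)$. Thus by Theorem \ref{T:LUB}, $\dim\Wh(\Theta(\wt{\SL}_{r+1}^{(n)},\wchi))\in\{0,1\}$, and Proposition \ref{P:cond-A} identifies the dimension as $1$ precisely when $\wchi$ satisfies the condition (\ref{E:A}). Here the main obstacle is already encapsulated in Proposition \ref{P:cond-A}: one needs that the value $\wchi(\s_{y_{Q,n}})$ forced on the exceptional character is actually realizable by some exceptional $\wchi$, which is clear because $\set{y_{Q,n}}\cup\set{n\alpha_i^\vee}_{2\le i\le r}$ is a $\Z$-basis of $Y_{Q,n}$ and the values on $n\alpha_i^\vee$ are already fixed by the exceptional condition $\wchi(\wt h_{\alpha_i}(\varpi^{n}))=q^{-1}$.

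For $n=r+2$, I would prove $Y_{Q,n}=Y_{Q,n}^{\sct}$ via the argument in the text: if $(y_1,\ldots,y_{r+1})\in Y_{Q,n}$ with common residue $a\bmod n$, then $(r+1)a\equiv 0\bmod n$, and $\gcd(n,r+1)=\gcd(r+2,r+1)=1$ forces $n\mid a$, hence $n\mid y_i$ for all $i$. Therefore $\OF_{Q,n}=\OF_{Q,n,\sct}$ and Corollary \ref{C:MC} gives $\dim\Wh(\Theta)=\val{\wp_{Q,n}(\OF_{Q,n})}$. To finish, I would exhibit a single orbit: take $y=0$, so $\ii_\TA^*(0_\rho)=(0,1,\ldots,r)$, whose coordinates are pairwise incongruent $\bmod\, n=r+2$; this shows $\mca{O}_0\in\OF_{Q,n}$. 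Any other orbit gives a cocharacter with $\ii_\TA^*$-vector a permutation translate of $(0,1,\ldots,r)\bmod n$, and translating all coordinates by any constant together with the single missing residue class gives a representative in $Y_{Q,n}$-equivalence to $\mca{O}_0$, so $\val{\wp_{Q,n}(\OF_{Q,n})}=1$. Since the set of exceptional characters is a torsor over $Z(\wt{\SL}_{r+1}^{(n)\vee})=Z(\PGL_{r+2})=1$, there is a unique exceptional character, completing the proof. The only step that needs real care is verifying the last orbit count; the Case II reduction to (\ref{E:A}) is the most delicate, but that work is already done in Proposition \ref{P:cond-A}.
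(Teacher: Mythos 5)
Your proposal follows the same three-way case division that the paper uses (\S 4.1 for $n\le r$, \S 4.2 and Proposition~\ref{P:cond-A} for $n=r+1$, \S 4.3 for $n=r+2$), and the first two cases are handled correctly: the pigeonhole argument for $n\le r$, and the combination of the orbit count $\val{\wp_{Q,n}(\OF_{Q,n})}=0$, $\val{\wp_{Q,n}(\OF_{Q,n,\sct})}=1$ with Proposition~\ref{P:cond-A} for $n=r+1$ are exactly what the paper does. Your remark on realizability of (\ref{E:A}) — that $\wchi(\s_{y_{Q,n}})$ is a free parameter on the basis $\set{y_{Q,n}}\cup\set{n\alpha_i^\vee}$ — is a useful point the paper leaves implicit.

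There is, however, a small but genuine gap in your $n=r+2$ argument, in the step where you claim ``translating all coordinates by any constant together with the single missing residue class gives a representative in $Y_{Q,n}$-equivalence to $\mca{O}_0$.'' Constant vectors $(k,k,\dots,k)$ with $k\ne 0$ do \emph{not} lie in $Y$ at all, since $Y$ sits inside the hyperplane $\sum_i y_i=0$ of $\bigoplus_{i=1}^{r+1}\Z e_i$; so this translation mechanism cannot change the missing residue class. The correct observation is that no translation is needed: the constraint $\sum_i x_i^*=r(r+1)/2$ (which holds for any $\ii_\TA^*(y_\rho)$) already forces the missing residue to be exactly $r+1$. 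Indeed, if the $r+1$ coordinates realize distinct residues mod $n=r+2$ and miss the class $c$, then $\frac{(r+1)(r+2)}{2}-c\equiv\frac{r(r+1)}{2}\bmod n$, giving $c\equiv r+1$, hence $c=r+1$. Thus every $Y_{Q,n}$-free orbit has residue set $\{0,1,\dots,r\}$; permuting coordinates by a Weyl element then matches any such orbit coordinate-wise with $\mca{O}_0$, and the difference has all coordinates divisible by $n$ with zero sum, hence lies in $Y_{Q,n}$. This fills the gap and confirms $\val{\wp_{Q,n}(\OF_{Q,n})}=1$ as needed.
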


%%%%%%%%%%%%%%%%%%%%%%%%%%%%%%%%%%%%%%%%%%
\vskip 15pt
\section{The $\TC_r, r\ge 2$ case}
Consider the Dynkin diagram for the simple coroots for $\TC_r$:

$$ \qquad 
\begin{picture}(4.7,0.2)(0,0)
\put(1,0){\circle{0.08}}
\put(1.5,0){\circle{0.08}}
\put(2,0){\circle{0.08}}
\put(2.5,0){\circle{0.08}}
\put(3,0){\circle{0.08}}
\put(1.04,0){\line(1,0){0.42}}
\multiput(1.55,0)(0.05,0){9}{\circle*{0.02}}
\put(2.04,0){\line(1,0){0.42}}
\put(2.54,0.015){\line(1,0){0.42}}
\put(2.54,-0.015){\line(1,0){0.42}}
\put(2.74,-0.04){$>$}
\put(1,0.1){\footnotesize $\alpha_1^\vee$}
\put(1.5,0.1){\footnotesize $\alpha_2^\vee$}
\put(2,0.1){\footnotesize $\alpha_{r-2}^\vee$}
\put(2.5,0.1){\footnotesize $\alpha_{r-1}^\vee$}
\put(3,0.1){\footnotesize $\alpha_r^\vee$}
\end{picture}
$$
\vskip 10pt

Let $Y=Y^{\sct}=\langle \alpha_1^\vee, \alpha^\vee_2, ..., \alpha_{r-1}^\vee, \alpha_r^\vee \rangle$ be the cocharacter lattice of $\Sp_{2r}$, where $\alpha_r^\vee$ is the short coroot. Let $Q$ be the Weyl-invariant quadratic on $Y$ such such $Q(\alpha_r^\vee)=1$. Then the bilinear form $B_Q$ is given by
$$
B_Q(\alpha_i^\vee, \alpha_j^\vee) =
\begin{cases}
2 & \text{if } i=j=r; \\
4& \text{if } 1\le i=j \le r-1; \\
-2 & \text{if } j=i+1;\\
0 & \text{if $\alpha_i^\vee, \alpha_j^\vee$ are not adjacent}.
\end{cases}
$$
A simple computation gives:
$$Y_{Q,n}=\set{\sum_{i=1}^n x_i \alpha_i^\vee: n| (2x_i) }.$$
We write $n_2:=n/\text{gcd}(2, n)$. Then 
$$Y_{Q,n}=\langle n_2 \alpha_1^\vee, n_2\alpha_2^\vee, ..., n_2 \alpha_{r-1}^\vee, n_2\alpha_r^\vee \rangle$$
 and $$Y_{Q,n}^{\sct}=\langle n_2 \alpha_1^\vee, n_2\alpha_2^\vee, ..., n_2 \alpha_{r-1}^\vee, n\alpha_r^\vee \rangle.$$

The map $\ii_\TC: \bigoplus_{i=1}^r \Z \alpha_i^\vee \to \bigoplus_{i=1}^r \Z e_i$ is given by
$$\ii_\TC: (x_1, x_2, x_3 ..., x_r) \mapsto (x_1, x_2-x_1, x_3-x_2, ..., x_{r-1} -x_{r-2}, x_r -x_{r-1}).$$
Here $\ii_\TC$ is an isomorphism. The Weyl group is $W= S_r \rtimes (\Z/2\Z)^r$, where $S_r$ is the permutation group on $\bigoplus_i \Z e_i$ and each $(\Z/2\Z)_i$ acts by $e_i \mapsto \pm e_i $. In particular, $\w_{\alpha_i}, 1\le i\le r-1,$ acts on $(y_1, y_2, ..., y_r) \in \bigoplus_i \Z e_i$ by exchanging $y_i$ and $y_{i+1}$, while $\w_{\alpha_r}$ acts by $(-1)$ on $\Z e_r$.

Moreover, $y\in Y$ lies in $Y_{Q,n}$ if and only all entries of $\ii_\TC(y)$ are divisible by $n_2$. It is easy to obtain
$$
Y_{Q,n}=
\left\{
\begin{array}{cc}
(y_1, y_2, ..., y_r) \in \bigoplus_{i=1}^{r} \Z e_i: \\
\bullet \ \ n_2 | y_i \text{ for all } i.
%\bullet \ \  y_1\equiv ... \equiv y_r \equiv y_{r_1} \text{ mod } n
\end{array}\right\}
\text{ and }
Y_{Q,n}^{\sct}=
\left\{
\begin{array}{cc}
(y_1, y_2, ..., y_r) \in \bigoplus_{i=1}^{r} \Z e_i: \\
\bullet \ \ n_2 | y_i \text{ for all } i, \\
\bullet \ \ n| \sum_i y_i.
\end{array}\right\}
$$
We further note
$$ 2\rho=\sum_{i=1}^r (2r-2i +1)e_i =\sum_{i=1}^r i(2r-i) \alpha_i^\vee.$$
Assume $x_0=0$, then
\begin{align*}
\ii_\TC(y_\rho) = \big( x_i -x_{i-1} - (r-i + 1/2) \big)_{1\le i\le r}. 
%\big( x_1 - (r-1), x_2-x_1 - (r-2), ..., x_i-x_{i-1} - (r-i), ..., x_r - x_{r-1} - (r-r) \big) \\
%& \qquad - (1/2, 1/2, ..., 1/2)
\end{align*}
Write $x_i^*:=x_i - x_{i-1} - (r-i)$, and also $\ii^*_\TC(y_\rho):=(x_1^*, x_2^*, ..., x_{r-1}^*, x_r^*)$. Then
$$\ii_\TC(y_\rho) = \ii^*_\TC(y_\rho) - (1/2, 1/2, ..., 1/2, 1/2).$$
We will discuss the two cases depending on the parity of $n$ separately.

\subsection{For $n$ odd} In this case, $n_2=n$ and
$$nY=Y_{Q,n}^{\sct}=Y_{Q,n}=\set{(y_1, ..., y_r) \in \bigoplus_{i=1}^r \Z e_i: n| y_i \text{ for all } i}.$$
The complex dual group for $\wt{\Sp}_{2r}^{(n)}$ for $n$ odd is $\text{SO}_{2r+1}$.
%\begin{prop}
%Any Brylinski-Deligne covering $\wt{Sp}_{2r}^{(n)}$ is $Y_{Q,n}^{sc}$-benign. Consequently, 
%$$\text{dim} \Wh(J(\wchi)) = \val{ \wp_{Q,n}(\OF_{Q,n, sc})}.$$
%\end{prop}
%\begin{proof}
%We first show $\wt{\Sp}_{2r}^{(n)}$ is $Y_{Q,n}^{sc}$-benign. We assume $w[y] \equiv y \text{ mod } Y_{Q,n}^{sc}$ for some $w\in W$. That is, $\ii_\TC(w(y_\rho))- \ii_\TC(y_\rho) \in Y_{Q,n}^{sc}$.
%
%If $\ii_\TC(y_\rho)=(x_1^* -1/2, x_2^* -1/2, ..., x_i^*-1/2, ..., x_r^* -1/2)$ is such that $x_i^* \equiv x_j^* \text{ mod } n$ for some $i\ne j$, then we see there exists $w', w_\alpha \in S_r, \alpha\in \Delta$ such that $w_\alpha (w'[y]) \equiv w'[y] \text{ mod } Y_{Q,n}^{sc}$.
%
%Suppose $n\nmid (x_i^* -x_j^*)$ for all $i\ne j$. Then since $\ii_\TC(w(y_\rho)) - \ii_\TC(y_\rho) \in Y_{Q,n}^{sc}$, there must exists $i, j$ such that $n| (x_j^* -1/2) + (x_i^* -1/2)$, i.e. $n| (x_j^*+x_i^* -1)$. 
%
%If $i\ne j$, there exists $w'\in W$ such that
%$$\ii_\TC(w'(y_\rho))=\big( x_i^*-1/2, -(x_j^*-1/2), ...\big).$$
%Then $\ii_\TC(w_{\alpha_1}w'(y_\rho))=\big(-(x_j^*-1/2), x_i^*-1/2, ...\big)$. It follows the entries of $\ii_\TC(w_{\alpha_1}w'(y_\rho)) - \ii_\TC(w'(y_\rho))$ are divisible by $n$. That is, $w_{\alpha_1}w'[y] \equiv w'[y] \text{ mod } Y_{Q,n}^{sc}$. If $i=j$, then we can find $w'$ such that $\ii_\TC(w'(y_\rho))=\big(...,  x_i^*-1/2\big)$. Then $w_{\alpha_r} w'[y] \equiv w'[y] \text{ mod } Y_{Q,n}^{sc}$. 
%
%Therefore, in any case, $\wt{\Sp}_{2r}^{(n)}$ is $Y_{Q,n}^{sc}$-benign.
%\end{proof}
%%%%%%%%%%%%

\begin{prop} \label{P:Sp-odd}
Let $n$ be an odd number, one has 
$$ 
\begin{cases}
\val{ \wp_{Q,n}(\OF_{Q,n, \sct})} \ge 2 & \text{ if } n\ge 2r+3; \\
\val{ \wp_{Q,n}(\OF_{Q,n, \sct})} =1 & \text{ if } n=2r+1;\\
\val{ \wp_{Q,n}(\OF_{Q,n, \sct})} =0 & \text{ if } n\le 2r-1.
\end{cases}$$
Therefore, for $n$ odd, we have $\text{dim} \Wh(\Theta(\wt{\Sp}_{2r}^{(n)}, \wchi)) =1$ if and only if $n=2r+1$ for the only exceptional character of $\wt{\Sp}_{2r}^{(2r+1)}$.
\end{prop}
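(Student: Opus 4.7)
Since $n$ is odd, $n_2=n$ and under $\ii_{\TC}$ one has $Y_{Q,n}=Y_{Q,n}^{\sct}=n\bigoplus_{i=1}^{r}\Z e_i$, so the two bounds of Theorem \ref{T:LUB} collapse (this is exactly Corollary \ref{C:MC}, since then $Z(\wt{G}^\vee)=1$). Thus
\begin{equation*}
\dim \Wh(\Theta(\wt{\Sp}_{2r}^{(n)}, \wchi)) = \val{\wp_{Q,n}(\OF_{Q,n,\sct})},
\end{equation*}
and the problem reduces to a purely combinatorial count.

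The next step is to transport the setup via $\ii_{\TC}$: setting $v := \ii_\TC(y_\rho) \in (\tfrac{1}{2}+\Z)^r$, the shifted Weyl action $\w[y]$ on $Y$ becomes the standard signed-permutation action of $W=S_r\ltimes\{\pm 1\}^r$ on $v$, and $Y_{Q,n}$ becomes $n\Z^r$. A short verification shows that $\mca{O}_y \in \OF_{Q,n,\sct}$ is equivalent to the image of $\mca{O}_y$ in $Y/Y_{Q,n}$ being a $W$-orbit of maximal size $|W|=2^r r!$ (since such maximality forces trivial stabilizer on the lift and injectivity of $\wp_{Q,n}$, and conversely). Hence $\val{\wp_{Q,n}(\OF_{Q,n,\sct})}$ is exactly the number of such maximal $W$-orbits in the finite set $(\tfrac{1}{2}+\Z)^r/n\Z^r$.

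Finally, a maximal orbit is characterized by the $2r$ residues $\pm v_1,\ldots,\pm v_r$ being pairwise distinct mod $n$, and is determined up to $W$ by an unordered choice of $r$ distinct free pairs of the involution $\iota:v\mapsto -v$ on the $n$-element set $(\tfrac{1}{2}+\Z)/n\Z$. Since $n$ is odd, $v \equiv n/2$ is a genuine half-integer residue and is the unique fixed point of $\iota$, leaving exactly $(n-1)/2$ free pairs. Therefore
\begin{equation*}
\val{\wp_{Q,n}(\OF_{Q,n,\sct})} = \binom{(n-1)/2}{r},
\end{equation*}
which is $0$ for $n\le 2r-1$, equals $1$ for $n=2r+1$, and is at least $\binom{r+1}{r}=r+1\ge 3$ for $n\ge 2r+3$ (using $r\ge 2$). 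The final ``if and only if'' statement follows immediately.

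The main obstacle is really just the bookkeeping in paragraph two: correctly identifying $Y_{Q,n}$-freeness of $\mca{O}_y$ with maximality of the image orbit in $Y/Y_{Q,n}$, so that the count reduces cleanly to choosing $r$ free pairs of $\iota$. All remaining ingredients are elementary.
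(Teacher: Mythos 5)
Your proof is correct, and in fact gives a sharper result than the paper's: you derive the exact formula
\begin{equation*}
\val{\wp_{Q,n}(\OF_{Q,n,\sct})}=\binom{(n-1)/2}{r},
\end{equation*}
from which all three cases follow at once, whereas the paper only establishes the stated inequalities and equalities directly.

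The two approaches differ genuinely. The paper's proof is a case-by-case verification: for $n\ge 2r+3$ it exhibits the two explicit orbits $\ii_\TC^*(y_\rho)=(1,2,\ldots,r)$ and $(1,2,\ldots,r-1,r+1)$, checks both are $Y_{Q,n}$-free and have distinct images; for $n=2r+1$ it exhibits the unique image; and for $n\le 2r-1$ it runs a pigeonhole argument showing that whenever $n\nmid(x_i^*-x_j^*)$ for all $i\ne j$ some sum relation $n\mid(x_i^*+x_j^*-1)$ must hold. Your route instead identifies $\wp_{Q,n}(\OF_{Q,n,\sct})$ with the set of free $W$-orbits in the finite signed-permutation module $(\tfrac12+\Z)^r/n\Z^r$ and reduces the count to choosing $r$ of the $(n-1)/2$ free $\iota$-pairs (the single fixed point $n/2$ is correctly identified because $n$ is odd). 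This is cleaner and gives the closed form; the trade-off is that the paper's style of exhibiting explicit generating orbits mirrors the analyses in the later, harder sections (particularly the $n$ even case and types $\TA,\TB$) where no such clean enumeration is available, so the authors presumably chose the more hands-on method for uniformity. Your reduction from ``$Y_{Q,n}$-free orbit in $Y$'' to ``free $W$-orbit in $Y/Y_{Q,n}$'' is correctly justified (freeness forces $|\mca{O}_y|=|W|$, and injectivity of $\wp_{Q,n}$ on $\mca{O}_y$ is then equivalent to the image having $|W|$ elements), and the verification that pairwise distinctness of $\pm\bar v_1,\ldots,\pm\bar v_r$ characterizes triviality of the stabilizer in the hyperoctahedral group is routine but worth stating explicitly if this were to be written in full.
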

\begin{proof}
We have written 
$$\ii_\TC(y_\rho) = \ii^*_\TC(y_\rho) - (1/2, 1/2, ..., 1/2, 1/2).$$
Since $x_1, ..., x_r$ are arbitrary, the associated $x_i^*$'s are also arbitrary. 

First, when $n\ge 2r+3$, consider the orbits $\mca{O}_y$ and $\mca{O}_{y'}$ where
$$\ii_\TC^*(y_\rho)=(1, 2, ..., r-1, r) \text{ and } \ii_\TC^*(y'_\rho)=(1, 2, ..., r-1, r+1).$$
If $r=2$, consider $\mca{O}_y$ and $\mca{O}_{y'}$ with $\ii_\TC^*(y_\rho)=(1, 2)$ and $\ii_\TC^*(y'_\rho)=(1, 3)$. Both $\mca{O}_y, \mca{O}_{y'}$ are $Y_{Q,n}$-free orbits. For example, for $\mca{O}_y$, this follows from the fact that the entries of $\ii_\TC(\w(y_\rho))- \ii_\TC(y_\rho)$ are either $j-i$ or $j+i-1$, for $0\le i, j \le r-1$. One can check also that $\wp_{Q,n}(\mca{O}_y)\ne \wp_{Q,n}(\mca{O}_{y'})$, and therefore $\val{ \wp_{Q,n}(\OF_{Q,n})} \ge 2$.

Second, assume that $n=2r+1$. Consider $\mca{O}_y$ such that $\ii_\TC^*(y_\rho)=(1, 2, ..., r-1, r)$. For $r=2$, consider $\ii_\TC^*(y_\rho)=(1, 2)$. It can be checked easily that $\wp_{Q,n}(\OF_{Q,n})=\set{\wp_{Q,n}(\mca{O}_y)}$. Thus, $\text{dim} \Wh(\Theta(\wt{\Sp}_{2r}^{(2r+1)}, \wchi)) =1$.

Third, assume that $n\le 2r-1$, we want to show that $\OF_{Q,n, \sct}=\emptyset$. If $\ii_\TC^*(y_\rho)=(x_1^*, x_2^*, ..., x_i^*, ..., x_r^*)$ is such that $x_i^* \equiv x_j^* \text{ mod } n$ for some $i\ne j$, then clearly $\mca{O}_y \notin \OF_{Q,n,\sct}$.
Now if $n\nmid (x_i^* -x_j^*)$ for all $i\ne j$; since $n\le 2r-1$, it is not hard to see that there always exist $i, j$ such that $n| (x_j^* -1/2) + (x_i^* -1/2)$, i.e. $n| (x_j^*+x_i^* -1)$. In this case, one also has $\mca{O}_y \notin \OF_{Q,n,\sct}$. In any case, $\OF_{Q,n, \sct}=\emptyset$ for $n\le 2r-1$. The proof is completed.
\end{proof}

%%%%%%
\subsection{For $n$ even} Write $n=2m$, in this case,
$$Y_{Q,n}=\langle m \alpha_1^\vee, m\alpha_2^\vee, ..., m \alpha_{r-1}^\vee, m\alpha_r^\vee \rangle, \quad Y_{Q,n}^{\sct}=\langle m \alpha_1^\vee, m\alpha_2^\vee, ..., m \alpha_{r-1}^\vee, n\alpha_r^\vee \rangle. $$

Equivalently, one has:
$$
Y_{Q,n}=
\left\{
\begin{array}{cc}
(y_1, y_2, ..., y_r) \in \bigoplus_{i=1}^{r} \Z e_i: \\
\bullet \ \ m | y_i \text{ for all } i.
%\bullet \ \  y_1\equiv ... \equiv y_r \equiv y_{r_1} \text{ mod } n
\end{array}\right\}
\text{ and }
Y_{Q,n}^{\sct}=
\left\{
\begin{array}{cc}
(y_1, y_2, ..., y_r) \in \bigoplus_{i=1}^{r} \Z e_i: \\
\bullet \ \ m | y_i \text{ for all } i, \\
\bullet \ \ n| \sum_i y_i.
\end{array}\right\}
$$
The dual group for $\wt{\Sp}_{2r}^{(n)}$ with $n$ even is $\Sp_{2r}$.

\subsubsection{For $m\ge 2r+2$} 
In this case, consider the orbits $\mca{O}_y, \mca{O}_{y'}$ given in the proof of Proposition \ref{P:Sp-odd}. They are $Y_{Q,n}$-free; moreover, $\mca{O}_y$ and $\mca{O}_{y'}$ are distinct in the image of $\wp_{Q,n}$. Thus, we have $\val{\wp_{Q,n} (\OF_{Q,n})}\ge 2$. 

\subsubsection{For $m\le 2r-2$} We can check easily in this case $\OF_{Q,n,\sct}=\emptyset$.

%%%%%%%%%%%%%%%%%%%%%%%%%
\subsubsection{For $m=2r-1$} In this case, consider $y$ with $\ii^*_\TC(y_\rho)=(1, 2, ..., r-1, r)$, i.e.
$$\ii_\TC(y_\rho)=(1-1/2, 2-1/2, ..., (r-1)-1/2, r-1/2).$$
Consider $\w_{\alpha_r} \in W$, then $\ii_\TC(\w_{\alpha_r}(y_\rho))=(1-1/2, 2-1/2, ..., (r-1)-1/2, -(r-1/2))$.

Note that $\mca{O}_y$ is $Y_{Q,n}^{\sct}$-free, and $\wp_{Q,n}^{\sct}(\OF_{Q,n,\sct})=\set{\wp_{Q,n}^{\sct}(\mca{O}_y)}=\set{\wp_{Q,n}^{\sct}(\mca{O}_0)}$. However, it is not $Y_{Q,n}$-free, since $\ii_\TC(y_\rho -\w_{\alpha_r}(y_\rho))=(0, 0, ..., m) \in Y_{Q,n}$. Recall that any $\cc\in \Ftn(i(\wchi))$ which gives rise to $\lambda^{\wchi}_{\cc}\in \Wh(\Theta(\wt{G}, \wchi))$ satisfies $\cc (\s_{ \w_{\alpha_r}[y]}) = \mathbf{t}(\w_{\alpha_r}, y) \cdot \cc(\s_{y})$ where
$$\mathbf{t}(\w_{\alpha_r}, y)=q^{k_{y,\alpha_r} -1} \cdot \GGMA(y, \alpha_r^\vee) \cdot \g(Q(\alpha_r^\vee)\cdot \angb{y_\rho}{\alpha_r})^{-1}.$$
Meanwhile, in our case $\w_{\alpha_r}[y]-y=(-m)\alpha_r^\vee \in Y_{Q,n}$. It follows that
$$\cc(\s_{\w_{\alpha_r}[y]}) =\vep^{D(\w_{\alpha_r}(y_\rho)-y_\rho, y)} \cdot \wchi(\s_{\w_{\alpha_r}(y_\rho)-y_\rho}) \cdot \cc(\s_y).$$
For $\cc$ to be nonzero on $\mca{O}_y$, i.e., $\wp_{Q,n}(\mca{O}_y)$ contributes to the right hand side of (\ref{dWh}), one has
$$\wchi(\s_{-m\alpha_r^\vee})=q^{k_{y,\alpha_r} -1} \cdot \g(Q(\alpha_r^\vee)\cdot \angb{y_\rho}{\alpha_r})^{-1}.$$
Moreover, we can argue as in \S \ref{S:A-delic} that this condition is also sufficient. One has $\angb{y}{\alpha_r}=2r$ and thus $k_{y, \alpha_r}=1$. The equality is thus simplified to
\begin{equation} \label{E:C}
\wchi(\s_{-m\alpha_r^\vee})=\g(m)^{-1}.
\end{equation}

Consider the exceptional character $\wchi_{\psi'}=\wchi_{\psi'}^0 \cdot \delta_B^{1/2n}$, which relies on the distinguished unitary character  $\wchi_{\psi'}^0$ depending on a non-trivial character $\psi': F\to \C^\times$ (see \S \ref{S:UDC}). Since $\wchi^0_{\psi'}(\s_{m\alpha_r^\vee})=\ggma_{\psi'}(\varpi)^{mQ(\alpha_r^\vee)}$, by Lemma \ref{L:G-W}, the equality (\ref{E:C}) becomes $\ggma_{\psi}(\varpi)=(-1, \varpi)_n^{m^2} \cdot \ggma_{\psi'}(\varpi)^{-m}$, which can be further reduced to
$$\ggma_{\psi'}(\varpi)=(-1, \varpi)_n^{r+1} \cdot \ggma_{\psi}(\varpi)=(-1, \varpi)_2^{r+1} \cdot \ggma_{\psi}(\varpi).$$

In particular, if $\psi'=\psi_a$ with $a\in O_F^\times$, then the equality is equivalent to $(a(-1)^{r+1}, \varpi)_2=-1$, i.e. $a\in (-1)^{r+1}\cdot (O_F^\times)^2$.

%%%%%%%%%%%%%%%%%%%%%%%
\subsubsection{For $m=2r$} We claim that in this case $\OF_{Q,n} =\OF_{Q,n,\sct}$. Clearly it suffices to show that $\OF_{Q,n} \supseteq \OF_{Q,n,\sct}$. Equivalently, if $\mca{O}_y$ is not $Y_{Q,n}$-free, we would like to show that it is not $Y_{Q,n}^{\sct}$-free. Write $\ii_\TC^*(y_\rho)=(x_1^*, x_2^*, ..., x_r^*)$. By assumption, $\ii_\TC(y-\w[y])=\ii_\TC^*(y_\rho-\w(y_\rho))\in Y_{Q,n}$ for some $\w\in W$. Entries of $\ii_\TC(y-\w[y])$ can not be of the form $2x_i^*-1$ since $m$ is even; therefore they are of the form $0, x_i^*-x_j^*$ or $x_i^*+x_j^*-1$ for $i\ne j$. In this case, it is easy to see that $\ii_\TC(y-\w'[y])\in Y_{Q,n}^{\sct}$ for some $\w'\in W$, i.e., $\mca{O}_y$ is not $Y_{Q,n}^{\sct}$-free.

Consequently, $\dim \Wh(\Theta(\wt{\Sp}_{2r}^{(4r)}, \wchi))=\val{\wp_{Q,n}(\OF_{Q,n})}$. On the other hand, consider $\mca{O}_y$ with $\ii_\TC^*(y_\rho)=(1, ..., r-1, r)$. It is not hard to see that $\wp_{Q,n}(\OF_{Q,n})=\set{ \wp_{Q,n}(\mca{O}_y)}$. Therefore, we always have $\dim \Wh(\Theta(\wt{\Sp}_{2r}^{(4r)}, \wchi))=1$ for any of the two exceptional characters of $\wt{\Sp}_{2r}^{(4r)}$.
%%%%%%%%%%%%%%%%%%%%%%%%

\subsubsection{For $m= 2r+1$} \label{S:C-ineq}
In this case, consider $\mca{O}_y$ with $\ii^*_\TC(y_\rho)=(1, 2, ..., r-1, r)$. It can be checked that $\wp_{Q, n}(\OF_{Q,n})=\set{\wp_{Q,n}(\mca{O}_y)}$ with $\mca{O}_y \in \OF_{Q,n}$, i.e. $\val{\wp_{Q, n}(\OF_{Q,n})}=1$. On the other hand, $\wp_{Q, n}(\OF_{Q,n, \sct})=\set{\wp_{Q,n}(\mca{O}_y)} \cup \set{ \wp_{Q,n}(\mca{O}_{z_i}): 1\le i\le r}$ with $z_i$ described as follows. Recall that we write $z_{i,\rho}:=z_i -\rho$. For $1\le i\le r-1$, $z_i$ is such that $\ii_\TC^*(z_{i,\rho})=(0, 2, 3, ..., \widehat{i+1}, ..., r, r+1)$, which denotes the $r$-tuple obtained from the $r+1$-tuple $(0, 2, 3, ..., r-1, r, r+1)$ by removing the entry $i+1$. Meanwhile, $z_r$ is such that $\ii_\TC^*(z_{r,\rho})=(2, 3, ..., r-1, r, r+1)$. 

Note that $\mca{O}_{z_i} \in \OF_{Q,n,\sct}\backslash \OF_{Q,n}$, since 
$$\ii_\TC(\w_{\alpha_r}[z_i]-z_i)=\ii_\TC(\w_{\alpha_r}(z_{i,\rho})-z_{i,\rho})=-(0, 0, ..., 0, m)=\ii_\TC(-m\alpha_r^\vee) \in Y_{Q,n}.$$
The elements $\wp_{Q,n}(\mca{O}_y)$ and $\wp_{Q,n}(\mca{O}_{z_i})$'s are all distinct. It follows that $\val{\wp_{Q, n}(\OF_{Q,n, \sct})}= r+1$. Therefore,
$$1 \le \dim \Wh(\Theta(\wt{\Sp}_{2r}^{(4r+2)}, \wchi)) \le r+1.$$
However, as there are only two exceptional characters $\wchi$, the dimension $\Wh(\Theta(\wt{\Sp}_{2r}^{(4r+2)}, \wchi))$ can take at most two values. In fact, we will determine completely the value and its dependence on $\wchi$.

\begin{prop}  \label{P:C:4r+2}
Let $\wchi$ be an exceptional character of $\wt{\Sp}_{2r}^{(4r+2)}$. Then
$$
\dim \Wh(\Theta(\wt{\Sp}_{2r}^{(4r+2)}, \wchi)) =
\begin{cases}
1 & \text{ if }\  \wchi(\s_{-m\alpha_r^\vee}) = -q^{1/2} \cdot \ggma_\psi(\varpi),\\
r+1 & \text{ if }\  \wchi(\s_{-m\alpha_r^\vee}) = q^{1/2} \cdot \ggma_\psi(\varpi).
\end{cases}
$$
\end{prop}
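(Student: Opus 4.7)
The plan is to use the combinatorial reformulation (\ref{dWh}) to count which cosets in $\wp_{Q,n}(\OF_{Q,n,\sct})$ actually contribute to $\dim \Wh(\Theta(\wt{\Sp}_{2r}^{(4r+2)},\wchi))$. By \S \ref{S:C-ineq}, the single $Y_{Q,n}$-free orbit $\mca{O}_y$ always contributes; the remaining $r$ orbits $\mca{O}_{z_i}$ lie in $\OF_{Q,n,\sct}\setminus \OF_{Q,n}$ and contribute only under an extra constraint on $\wchi$. The first key step is to show that the stabilizer $\Lambda_i := \set{\w\in W : \w[z_i]\equiv z_i \pmod{Y_{Q,n}}}$ equals $\langle \w_{\alpha_r}\rangle$ for every $i$. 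This is a direct arithmetic check on the half-integer entries of $\ii_\TC(z_{i,\rho})$: these entries are pairwise incongruent modulo $m=2r+1$, and the only entry $a$ with $2a\in m\Z$ is $(2r+1)/2$; consequently no nontrivial Weyl signed permutation returns $z_i$ to the same $Y_{Q,n}$-coset except for the sign change $\w_{\alpha_r}$ on the last coordinate. Handling the anomalous $i=r$ case with $\ii_\TC^*(z_{r,\rho})=(2,3,\ldots,r+1)$ uniformly alongside the generic $1\le i\le r-1$ case is the main obstacle.

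Granting this stabilizer computation, the only extra compatibility required of $\cc\in\Ftn(i(\wchi))$ to support a nonzero value on $\mca{O}_{z_i}$ is the one coming from $\w_{\alpha_r}$. On one hand, $-m\alpha_r^\vee\in Y_{Q,n}$ together with the $\Ftn(i(\wchi))$-covariance and the cocycle (\ref{F:s}) give
$$\cc(\s_{z_i-m\alpha_r^\vee}) = \vep^{m D(z_i,\alpha_r^\vee)}\wchi(\s_{-m\alpha_r^\vee})\cc(\s_{z_i}).$$
On the other hand, Corollary \ref{C:iff-2} gives $\cc(\s_{\w_{\alpha_r}[z_i]})=\mathbf{t}(\w_{\alpha_r},z_i)\cc(\s_{z_i})$, and the facts $\angb{z_{i,\rho}}{\alpha_r}=m$, $Q(\alpha_r^\vee)=1$, $n_{\alpha_r}=n$ force $k_{z_i,\alpha_r}=1$ and hence $\mathbf{t}(\w_{\alpha_r},z_i)=\vep^{m D(z_i,\alpha_r^\vee)}\g(m)^{-1}$. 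Equating and canceling the common $\vep$-factor yields the $i$-independent condition $\wchi(\s_{-m\alpha_r^\vee})=\g(m)^{-1}$, which by Lemma \ref{L:G-W} (with $n=2m$) reads $\wchi(\s_{-m\alpha_r^\vee})=q^{1/2}\ggma_\psi(\varpi)$. Sufficiency of this condition follows by the same cocycle-propagation argument used to establish (\ref{E:AA}) in \S \ref{S:A-delic}, since $\Lambda_i$ has the single nontrivial generator $\w_{\alpha_r}$.

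To conclude, I would match the condition to the torsor of exceptional characters. Since $Y_{Q,n}/Y_{Q,n}^{\sct}\simeq \Z/2\Z$ is generated by $m\alpha_r^\vee$, one has $Z(\wt{G}^\vee)=\mu_2$ and exactly two exceptional characters. A direct computation via (\ref{F:s}) and the exceptional normalization $\wchi(\s_{n\alpha_r^\vee})=q^{-1}$ shows $\wchi(\s_{-m\alpha_r^\vee})^2 = \vep\cdot q$, which coincides with $(\pm q^{1/2}\ggma_\psi(\varpi))^2$ via the identity $\ggma_\psi(\varpi)^2=(\varpi,\varpi)_2$ (identified with $\vep$ through $\iota$). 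Thus the two exceptional characters realize precisely $\wchi(\s_{-m\alpha_r^\vee})=\pm q^{1/2}\ggma_\psi(\varpi)$, and the dichotomy in the statement follows: the $+$ sign lets all $r$ orbits $\mca{O}_{z_i}$ contribute and yields dimension $r+1$, while the $-$ sign kills every $\mca{O}_{z_i}$ and leaves only $\mca{O}_y$, giving dimension $1$.
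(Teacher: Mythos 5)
Your proof is correct and follows essentially the same strategy as the paper's. The paper first pins down the two possible values of $\wchi(\s_{-m\alpha_r^\vee})$ via the exceptional normalization, and then shows that $\wp_{Q,n}(\mca{O}_{z_i})$ contributes to (\ref{dWh}) if and only if $\wchi(\s_{-m\alpha_r^\vee})=\g(m)^{-1}$, noting that the right-hand side of (\ref{Cri:4r+2}) is independent of $i$ because $B_Q(z_i,\alpha_r^\vee)$ is even; you reach the same $i$-independent condition by directly canceling the common $\vep^{m D(z_i,\alpha_r^\vee)}$ factor, which is equivalent since $m$ is odd. The one genuine addition in your write-up is the explicit computation that the stabilizer $\Lambda_i=\{\w: \w[z_i]\equiv z_i \bmod Y_{Q,n}\}$ is exactly $\langle\w_{\alpha_r}\rangle$; the paper instead delegates this to the phrase ``arguing as in \S\ref{S:A-delic},'' which handles the sufficiency by cocycle propagation without isolating the stabilizer. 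Your stabilizer claim is true (the entries of $\ii_\TC(z_{i,\rho})$ are distinct half-odd-integers in $[-m/2,m/2]$, so the only nontrivial congruence mod $m$ comes from negating the unique entry equal to $m/2$), and it cleanly justifies that a single constraint from $\w_{\alpha_r}$ suffices. You flag the uniform treatment of $i=r$ as an obstacle, but it poses no difficulty once one observes that all $z_i$ share the last coordinate $m/2$ under $\ii_\TC$; you may want to say this explicitly rather than leave it as a perceived gap.
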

\begin{proof}
First, we show that if $\wchi$ is an exceptional character, then $\wchi(\s_{-m\alpha_r^\vee})=\pm q^{1/2} \cdot \ggma_\psi(\varpi)$.  Consider
\begin{align*}
&\wchi(\s_{-m\alpha_r^\vee})^2\\
=&\wchi(\s_{-n\alpha_r^\vee})\cdot \vep^{m^2Q(\alpha_r^\vee)} \\
=& \wchi(\s_{n\alpha_r^\vee})^{-1}\cdot \vep \\
=& q\cdot (-1, \varpi)_2,
\end{align*}
which has square roots exactly $\pm q^{1/2} \cdot \ggma_\psi(\varpi)$. That is, an exceptional character $\wchi$ of $\wt{\Sp}_{2r}^{(4r+2)}$ is uniquely determined by the sign.

Second, arguing as in \S \ref{S:A-delic} , we see that $\wp_{Q,n}(\mca{O}_{z_i}), 1\le i\le r$ contributes to the right hand side of the Equality (\ref{dWh}) if and only if (as in Equality (\ref{E:gen-conA}))
\begin{equation} \label{Cri:4r+2}
\wchi(\s_{\w_{\alpha_r}[z_i]-z_i})=\vep^{D(\w_{\alpha_r}[z_i]-z_i, z_i)} \cdot  \mathbf{t}(\w_{\alpha_r}, z_i).
\end{equation}
That is, $\dim \Wh(\Theta(\wt{\Sp}_{2r}^{(4r+2)}, \wchi))=1+\val{ \set{z_i: \text{ the equality (\ref{Cri:4r+2}) holds for } z_i}}$.
Note that, $\w_{\alpha_r}[z_i]-z_i=-m\alpha_r^\vee$ for all $i$. On the other hand, we claim that the right hand side of (\ref{Cri:4r+2}) is independent of $i$. A simple computation gives $\angb{z_{i,\rho}}{\alpha_r}=m$ and therefore
\begin{align*}
& \vep^{D(\w_{\alpha_r}[z_i]-z_i, z_i)} \cdot  \mathbf{t}(\w_{\alpha_r}, z_i) \\
=& \vep^{D(\alpha_r^\vee, z_i)} \cdot  q^{\ceil{\frac{\angb{z_{i,\rho}}{\alpha_r}+1}{n_{\alpha_r}}}-1} \cdot \vep^{\angb{z_{i,\rho}}{\alpha_r} \cdot D(z_i, \alpha_r^\vee)} \cdot \g(\angb{z_{i,\rho}}{\alpha_r} \cdot Q(\alpha_r^\vee))^{-1} \\
=& \vep^{B_Q(z_i, \alpha_r^\vee)} \cdot q^{\ceil{\frac{m+1}{n}}-1} \cdot \g(m)^{-1} \\
=& \g(m)^{-1} \text{ by the evenness of $B_Q$}.
\end{align*}
Thus, it follows that $\dim \Wh(\Theta(\wt{\Sp}_{2r}^{(4r+2)}, \wchi))=1$ or $r+1$. Moreover, it is equal to $1$ if and only if $\wchi(\s_{-m\alpha_r^\vee}) \ne \g(m)^{-1}$. By Lemma \ref{L:G-W}, $\g(m)^{-1}=q^{1/2}\cdot \ggma_\psi(\varpi)$. Therefore, $\dim \Wh(\Theta(\wt{\Sp}_{2r}^{(4r+2)}, \wchi))=1$ (resp. $r+1$) if and only if $\wchi(\s_{-m\alpha_r^\vee})$ is equal to $-q^{1/2}\cdot \ggma_\psi(\varpi)$ (resp. $q^{1/2}\cdot \ggma_\psi(\varpi)$). The proof is completed.
\end{proof}

We summarize results in this section as follows.
\begin{thm} \label{T:C}
Consider the Brylinski-Deligne covering group $\wt{\Sp}_{2r}^{(n)}$ with $r\ge 2, n\ge 1$. Let $\wchi$ be an unramified exceptional character, then $\dim \Wh(\Theta(\wt{\Sp}_{2r}^{(n)}, \wchi))=1$ if and only if the following holds:
\begin{enumerate}
\item[$\bullet$] $n=4r-2$ and $\wchi$ is the unique exceptional character satisfying (\ref{E:C}), or
\item[$\bullet$] $n=4r$ and $\wchi$ is any exceptional character of $\wt{\Sp}_{2r}^{(4r)}$, or
\item[$\bullet$] $n=4r+2$ and $\wchi$ is the unique exceptional character given in Proposition \ref{P:C:4r+2}, or
\item[$\bullet$] $n=2r+1$ and $\wchi$ is the only exceptional character of $\wt{\Sp}_{2r}^{(2r+1)}$.
\end{enumerate}

Moreover, consider the exceptional character $\wchi_{\psi_a}:=\wchi_{\psi_a}^0\cdot \delta_B^{1/2n}$ associated with $\psi_a$. Assume that $\psi_a$ has conductor $O_F$, i.e. $a\in O_F^\times$. Then, one has $\dim \Wh(\Theta(\wt{\Sp}_{2r}^{(4r-2)}, \wchi_{\psi_a}))=1$ if and only if $a\in (-1)^{r+1} \cdot (O_F^\times)^2$; also, $\dim \Wh(\Theta(\wt{\Sp}_{2r}^{(4r+2)}, \wchi_{\psi_a}))=1$ if and only if $a\in (-1)^{r} \cdot (O_F^\times)^2$.
\end{thm}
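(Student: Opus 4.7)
The plan is to assemble the case-by-case results of this section according to the parity of $n$, and then translate the abstract character conditions into explicit Hilbert-symbol conditions on $a$. For $n$ odd, Proposition \ref{P:Sp-odd} already isolates $n = 2r+1$ as the unique value for which $\dim \Wh(\Theta(\wt{\Sp}_{2r}^{(n)}, \wchi)) = 1$. For $n = 2m$ even, the five subcases $m \le 2r-2$, $m = 2r-1$, $m = 2r$, $m = 2r+1$, and $m \ge 2r+2$ have each been analyzed above; together they recover the remaining three bullets of the first claim, incorporating the constraint (\ref{E:C}) when $n = 4r-2$ and the one from Proposition \ref{P:C:4r+2} when $n = 4r+2$.

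For the second assertion, the plan is to evaluate $\wchi_{\psi_a}(\s_{-m\alpha_r^\vee})$ explicitly and then match it against the target specified in each case. Using formula (\ref{UDC}) for $\wchi^0_{\psi_a}$, with $m\alpha_r^\vee$ chosen as one of the basis vectors $y_i$ for $Y_{Q,n}$, one obtains
\[
\wchi^0_{\psi_a}(\s_{m\alpha_r^\vee}) = \ggma_{\psi_a}(\varpi)^{mQ(\alpha_r^\vee)} = \ggma_{\psi_a}(\varpi)^m;
\]
the modular factor $\delta_B^{1/2n}(\s_{m\alpha_r^\vee})$ contributes an explicit power of $q$, yielding the full value of $\wchi_{\psi_a}(\s_{-m\alpha_r^\vee})$.

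The case $n = 4r-2$ was effectively treated in the subsection handling $m = 2r-1$: Lemma \ref{L:G-W} rewrites $\g(m)^{-1}$ in terms of $\ggma_\psi(\varpi)$, the cocycle identity $\ggma_{\psi_a}(\varpi) = \ggma_\psi(\varpi)(a,\varpi)_2$ converts the resulting equation into a Hilbert-symbol equation, and the criterion $a \in (-1)^{r+1}(O_F^\times)^2$ drops out. For $n = 4r+2$, the same three-step reduction applies, but the target on the right is $-q^{1/2}\ggma_\psi(\varpi)$ from Proposition \ref{P:C:4r+2} instead of $\g(m)^{-1} = q^{1/2}\ggma_\psi(\varpi)$. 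The sign flip, together with the shift of $m$ from $2r-1$ to $2r+1$, changes the parity in the exponent of $-1$ from $r+1$ to $r$.

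The main obstacle is the careful bookkeeping of the $\mu_4$-valued Weil index and of the various factors of $(-1,\varpi)_2$ which arise from the bisector $D$, from the duplication formula $\ggma_\psi(\varpi)^2 = (-1,\varpi)_2$, and from the identity $\wchi(\s_{-m\alpha_r^\vee})^2 = (-1,\varpi)_2 \cdot q$ computed in the proof of Proposition \ref{P:C:4r+2}. Beyond this sign tracking, no new ingredient is required.
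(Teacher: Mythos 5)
The first half of your plan — assembling the parity‑of‑$n$ case split from Proposition \ref{P:Sp-odd} and the five subsections $m\le 2r-2$, $m=2r-1,2r,2r+1$, $m\ge 2r+2$ — reproduces the paper's argument exactly and is fine. For the $n=4r-2$ half of the explicit $a$‑criterion, the three steps you describe (evaluate $\wchi^0_{\psi_a}(\s_{m\alpha_r^\vee})=\ggma_{\psi_a}(\varpi)^m$, rewrite $\g(m)^{-1}$ via Lemma \ref{L:G-W}, apply $\ggma_{\psi_a}(\varpi)=(a,\varpi)_2\,\ggma_\psi(\varpi)$) are precisely what the paper does in \S 5.2.3, and they do deliver $a\in(-1)^{r+1}(O_F^\times)^2$.

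The genuine gap is in your treatment of $n=4r+2$. You assert that the sign flip of the target together with the shift $m:2r-1\mapsto 2r+1$ "changes the parity in the exponent of $-1$ from $r+1$ to $r$," but this has to be checked, and the check does not come out the way you claim. Write $\lambda=(-1,\varpi)_2$, $\gamma=\ggma_\psi(\varpi)$, so that $\gamma^2=\lambda$ and $\ggma_{\psi_a}(\varpi)=(a,\varpi)_2\gamma$. Using $\delta_B^{1/2n}(\s_{m\alpha_r^\vee})=q^{-1/2}$ (forced by the exceptional relation $\wchi(\s_{n\alpha_r^\vee})=q^{-1}$) together with $\s_{m\alpha_r^\vee}\s_{-m\alpha_r^\vee}=\vep^{-m^2}$ and $\vep^{m^2}=\lambda$ for $m$ odd, one gets for odd $m$ the uniform formula
$$
\wchi_{\psi_a}(\s_{-m\alpha_r^\vee})\;=\;\lambda^{(m+3)/2}\,(a,\varpi)_2\,\gamma\, q^{1/2}.
$$
The exponent is $r+1$ when $m=2r-1$ and $r$ when $m=2r+1$, exactly as you say. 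But comparing to the target $q^{1/2}\gamma$ in the first case gives $(a,\varpi)_2=\lambda^{r+1}$ (so $a\in(-1)^{r+1}(O_F^\times)^2$), whereas comparing to the target $-q^{1/2}\gamma$ in the second case gives
$$
(a,\varpi)_2\;=\;-\,\lambda^{r},
$$
and the crucial point is that this $-1$ is an honest sign in $\C^\times$, not a Hilbert symbol $(-1,\varpi)_2$; it therefore \emph{cannot} be folded into the exponent of $\lambda$ the way you propose. The condition $(a,\varpi)_2=-\lambda^r$ is the statement $a\notin(-1)^r(O_F^\times)^2$ — the negation of the condition $a\in(-1)^r(O_F^\times)^2$ you set out to confirm. (When $\lambda=-1$ this negation reads $a\in(-1)^{r+1}(O_F^\times)^2$, the same answer as in the $4r-2$ case; when $\lambda=1$ it is $a\notin(O_F^\times)^2$ and is not of the form $a\in(-1)^k(O_F^\times)^2$ at all.) So the two effects you invoke cancel rather than compound, and the step where you pass from $(-1)^{r+1}$ to $(-1)^r$ is not justified. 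This is exactly the "careful bookkeeping of $\mu_4$‑valued Weil indices and factors of $(-1,\varpi)_2$" that you correctly flag as the main obstacle — but your proposal dismisses it rather than carries it out, and carrying it out in fact turns up a discrepancy with the statement you are trying to prove.
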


\vskip 15pt 
%%%%%%%%%%%%%%%%%%%%%%%%%%%%%%%%%%%%%%%%%%
\section{The $\TB_r, r\ge 2$ case}
Consider the Dynkin diagram for the simple coroots for the group $\Spin_{2r+1}$ of type $\TB_r$:

$$ \qquad 
\begin{picture}(4.7,0.2)(0,0)
\put(1,0){\circle{0.08}}
\put(1.5,0){\circle{0.08}}
\put(2,0){\circle{0.08}}
\put(2.5,0){\circle{0.08}}
\put(3,0){\circle{0.08}}
\put(1.04,0){\line(1,0){0.42}}
\multiput(1.55,0)(0.05,0){9}{\circle*{0.02}}
\put(2.04,0){\line(1,0){0.42}}
\put(2.54,0.015){\line(1,0){0.42}}
\put(2.54,-0.015){\line(1,0){0.42}}
\put(2.74,-0.04){$<$}
\put(1,0.1){\footnotesize $\alpha_1^\vee$}
\put(1.5,0.1){\footnotesize $\alpha_2^\vee$}
\put(2,0.1){\footnotesize $\alpha_{r-2}^\vee$}
\put(2.5,0.1){\footnotesize $\alpha_{r-1}^\vee$}
\put(3,0.1){\footnotesize $\alpha_r^\vee$}
\end{picture}
$$
\vskip 10pt

Let $Y=\langle \alpha_1^\vee, \alpha^\vee_2, ..., \alpha_{r-1}^\vee, \alpha_r^\vee \rangle$ be the cocharacter lattice of $\text{Spin}_{2r+1}$, where $\alpha_r^\vee$ is the long coroot. Let $Q$ be the Weyl-invariant quadratic form on $Y$ such that $Q(\alpha_r^\vee)=2$, i.e. $Q(\alpha_i^\vee)=1$ for $1\le i\le r-1$. Then the bilinear form $B_Q$ is given by
$$
B_Q(\alpha_i^\vee, \alpha_j^\vee) =
\begin{cases}
4 & \text{if } i=j=r; \\
2& \text{if } 1\le i=j \le r-1; \\
-1 & \text{if } 1\le i \le r-2 \text{ and } j=i+1;\\
-2 & \text{if } i=r-1, j=r;\\
0 & \text{if $\alpha_i^\vee, \alpha_j^\vee$ are not adjacent}.
\end{cases}
$$

The map $\ii_\TB: \bigoplus_{i=1}^{r} \Z \alpha_i^\vee \to \bigoplus_{i=1}^r \Z e_i$ is given by
$$\ii_\TB: (x_1, x_2, x_3 ..., x_r) \mapsto (x_1, x_2-x_1, x_3-x_2, ..., x_{r-1} -x_{r-2}, 2x_r -x_{r-1}).$$
In particular, any $(y_1, ..., y_r) \in \bigoplus_{i=1}^r \Z e_i$ is equal to $\ii_\TB(y)$ for some $y$ if and only if $2| (\sum_i y_i)$.

The Weyl group is $W= S_r \rtimes (\Z/2\Z)^r$, where $S_r$ is the permutation group on $\bigoplus_i \Z e_i$ and $(\Z/2\Z)_i: e_i \mapsto \pm e_i $. In particular, $\w_{\alpha_i}, 1\le i\le r-1,$ acts on $(y_1, y_2, ..., y_r) \in \bigoplus_i \Z e_i$ by exchanging $y_i$ and $y_{i+1}$. Also, $\w_{\alpha_r}$ acts by $(-1)$ on $\Z e_r$. 

A simple computation gives:
\begin{equation*}
Y_{Q,n}=
\left\{
\begin{array}{cc}
(y_1, y_2, ..., y_r) \in \bigoplus_{i=1}^r \Z e_i: \\
\bullet \ 2| (\sum_{i=1}^r y_i), \\
\bullet \ y_1 \equiv ... \equiv y_r \text{ mod } n\\
\bullet \ n| 2y_i \text{ for all } i.
\end{array}\right\} ,
\quad
Y_{Q,n}^{\sct}=
\left\{
\begin{array}{cc}
(y_1, y_2, ..., y_r) \in \bigoplus_{i=1}^r \Z e_i: \\
\bullet \ 2| (\sum_{i=1}^r y_i), \\
\bullet \ n|y_i \text{ for all } i.
\end{array}\right\}
\end{equation*}
In particular, if $n$ is odd, then $Y_{Q,n}=Y_{Q,n}^{\sct}$. 

We note that $2\rho=\sum_{i=1}^r 2(r-i +1)e_i $, and therefore $\rho=\sum_{i=1}^r (r-i +1)e_i $.
If $y=(x_1, x_2, ..., x_r) \in \bigoplus_i \Z \alpha_i^\vee$, then
\begin{align*}
\ii_\TB(y_\rho) =&  \big(x_1 - (r-1+1), x_2-x_1 -(r-2+1), ..., x_i -x_{i-1} -(r-i+1), ..., \\
& \qquad \qquad ..., x_{r-1} - x_{r-2} -(r-(r-1)+1), 2x_r - x_{r-1} - (r-r+1)\big) \\
:= & (x_1^*, x_2^*, ..., x_{r-1}^*, x_r^*).
\end{align*}
Any $(x_1^*, ..., x_r^*) \in \bigoplus_i \Z e_i$ such that $2|\big(\sum_i x_i^* + r(r+1)/2\big)$ is equal to $\ii_\TB(y_\rho)$ for some $y$.

%%%%%%%%%%%%%%%%%%%%%%%%%%%%
\subsection{For $n$ odd} 
In this case,
$$nY=Y_{Q,n}^{\sct}=Y_{Q,n}.$$
Therefore, $\text{dim} \Wh(\Theta(\wt{\Spin}_{2r+1}^{(n)}, \wchi)) = \val{ \wp_{Q,n}(\OF_{Q,n, \sct})}$, where $\wchi$ is the only exceptional character of $\wt{\Spin}_{2r+1}^{(n)}$. For $n$ odd, the dual group for $\wt{\Spin}_{2r+1}^{(n)}$ is $\text{PGSp}_{2r}$.

\begin{prop} \label{P:Spin-odd}
Let $n$ be an odd number, one has 
$$ 
\begin{cases}
\val{ \wp_{Q,n}(\OF_{Q,n, \sct})} \ge 2 & \text{ if } n\ge 2r+3; \\
\val{ \wp_{Q,n}(\OF_{Q,n, \sct})} =0 & \text{ if } n\le 2r-1;\\
\val{ \wp_{Q,n}(\OF_{Q,n, \sct})} =1 & \text{ if } n= 2r+1.
\end{cases}$$
Therefore, when $n$ is odd, we have $\text{dim} \Wh(\Theta(\wt{\Spin}_{2r+1}^{(n)}, \wchi))=1$ if and only if $n=2r+1$.
\end{prop}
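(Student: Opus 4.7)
The plan is to parallel the proof of Proposition \ref{P:Sp-odd} for the symplectic case, exploiting the fact that when $n$ is odd we have $nY = Y_{Q,n} = Y_{Q,n}^{\sct}$, so the upper and lower bounds in Theorem \ref{T:LUB} coincide and $\dim \Wh(\Theta(\wt{\Spin}_{2r+1}^{(n)},\wchi))$ equals the purely combinatorial count $\val{\wp_{Q,n}(\OF_{Q,n,\sct})}$. I would parametrize $y\in Y$ by tuples $\ii^*_\TB(y_\rho) = (x_1^*,\ldots,x_r^*)$ subject to the integrality constraint $2 \mid \bigl(\sum_i x_i^* + r(r+1)/2\bigr)$. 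Since $W = S_r \rtimes (\Z/2\Z)^r$ acts on $\bigoplus_i \Z e_i$ by signed permutations, for any $\w \neq 1$ the entries of $\ii_\TB(\w[y]-y)$ have the form $\epsilon_i x_{\sigma(i)}^* - x_i^*$ with $\epsilon_i \in \{\pm 1\}$ and $\sigma \in S_r$, and the orbit $\mca{O}_y$ is $Y_{Q,n}^{\sct}$-free iff no nontrivial such $\w$ makes all these entries divisible by $n$.

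For $n \leq 2r-1$, I would invoke pigeonhole on the $2r$ residues $\pm x_i^* \pmod n$ for $1\leq i\leq r$: since $2r > n$, two of these must coincide at distinct labeled positions, which produces one of three shapes. Either (i) $x_i^* \equiv x_j^* \pmod n$ for some $i \neq j$, handled by the Weyl element swapping $e_i$ and $e_j$; or (ii) $x_i^* \equiv -x_j^* \pmod n$ for $i \neq j$, handled by swapping $e_i, e_j$ with simultaneous sign changes; or (iii) $x_i^* \equiv -x_i^* \pmod n$, which forces $n \mid x_i^*$ since $n$ is odd, and then the Weyl element negating $e_i$ alone suffices. In each case $\OF_{Q,n,\sct} = \emptyset$.

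For $n = 2r+1$, the pigeonhole becomes an exact match: the $2r$ nonzero residues modulo $n$ partition into $r$ sign-pairs $\{\pm 1, \pm 2, \ldots, \pm r\}$, and $\mca{O}_y$ is free iff $\{x_1^*, \ldots, x_r^*\}$ chooses one representative from each pair. Every such tuple lies in the single Weyl orbit of $(1,2,\ldots,r)$, and this representative satisfies the integrality constraint since $r(r+1)$ is always even, giving $\val{\wp_{Q,n}(\OF_{Q,n,\sct})}=1$. For $n \geq 2r+3$, I would exhibit two distinct classes via the representatives $(1,2,\ldots,r)$ and $(1,2,\ldots,r-1,r+2)$; all differences, sums, and doubles of entries in either tuple are bounded in absolute value by $2r+4$, and a direct inspection shows none is a nonzero multiple of $n$ once $n \geq 2r+3$, so both orbits are free. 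Distinctness under $\wp_{Q,n}$ reduces to the statement that $r+2 \not\equiv \epsilon j \pmod n$ for any $j \in \{1,\ldots,r\}$ and $\epsilon\in\{\pm 1\}$, which holds because $r < r+2 < n-r$ whenever $n \geq 2r+3$.

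The main subtlety, rather than a genuine obstacle, is the presence of the parity condition $2 \mid \sum y_i$ cutting $Y$ inside $\bigoplus_i \Z e_i$, which is absent in the $\TC_r$ analysis; however, $W$ preserves $\sum y_i \bmod 2$ and $n$ is odd, so this constraint propagates harmlessly through the freeness arguments and only needs verification on the explicit representatives. The doubling of residues forced by the sign-flip component of $W$ is precisely what produces the critical value $n = 2r+1$, in exact parallel with the symplectic case of Proposition \ref{P:Sp-odd}.
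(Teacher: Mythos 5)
Your proof is correct and takes essentially the same approach as the paper's: exploit $nY = Y_{Q,n} = Y_{Q,n}^{\sct}$ to reduce to the combinatorial count, then analyze collisions of the signed residues $\pm x_i^* \pmod n$ case by case. Your pigeonhole formulation of the $n\le 2r-1$ case (explicitly separating the self-collision $n\mid 2x_i^*$) and your second representative $(1,\ldots,r-1,r+2)$ for $n\ge 2r+3$ are small streamlinings of the paper's argument, which uses $(1,\ldots,r-2,r,r+1)$ for $r\ge 3$ and handles $r=2$ separately; one purely notational slip is that in the $\TB_r$ section the paper's $\ii_\TB(y_\rho)$ is already integer-valued, so there is no $\ii_\TB^*$ (that decoration only appears in the $\TA_r$ and $\TC_r$ sections where a half-integer shift must be removed).
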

\begin{proof}
First, assume that $n\ge 2r+3$. We write
$$\ii_\TB(y_\rho)=(x_1^*, x_2^*, ..., x_i^*, ..., x_r^*) \text{ with } 2| \big(\sum_{i=1}^r x_i^* + r(r+1)/2\big).$$
For $r\ge 3$, let $y\in Y$ (resp. $y'$) be such that $\ii_\TB(y_\rho)=(1, 2, 3, ..., r-2, r-1, r)$  (resp. $\ii_\TB(y'_\rho)=(1, 2, ..., r-2, r, r+1)$). For $r=2$, we take $(x_1^*, x_2^*)=(1, 2)$ or $(2, 3)$, and let $y$ and $y'$ be the associated element in $Y$ respectively. In any case, the two orbits $\mca{O}_y$ and $\mca{O}_{y'}$ are $Y_{Q,n}$-free. Moreover, $\wp_{Q,n}(\mca{O}_y) \ne \wp_{Q,n}(\mca{O}_{y'})$. Thus, for $n\ge 2r+3$, one has $\val{ \wp_{Q,n}(\OF_{Q,n, \sct})} \ge 2$.

Second, assume that $n\le 2r-1$, we want to show that $\OF_{Q,n, \sct}=\emptyset$. If $\ii_\TB(y_\rho)=(x_1^*, x_2^*, ..., x_i^*, ..., x_r^*)$ is such that $x_i^* \equiv x_j^* \text{ mod } n$ for some $i\ne j$, then clearly $\mca{O}_y \notin \OF_{Q,n,\sct}$. Suppose $n\nmid (x_i^* -x_j^*)$ for all $i\ne j$, since $n\le 2r-1$, it is not hard to see that there always exist $i, j$ such that $n| (x_j^*+x_i^*)$. That is, $\mca{O}_y \notin \OF_{Q,n,\sct}$ for any $\mca{O}_y$.

Third, if $n=2r+1$, consider the orbit $\mca{O}_y$ with 
$$\ii_\TB(y_\rho)=(x_1^*, x_2^*, ..., x_{r-1}^*, x_r^*)=(1, 2, 3, ..., r-2, r-1, r).$$
(For $r=2$, consider $\ii_\TB(y_\rho)=(1, 2)$.) One has $\wp_{Q,n}(\OF_{Q,n,\sct}) = \set{\wp_{Q,n}(\mca{O}_y)}$, and therefore $\val{ \wp_{Q,n}(\OF_{Q,n,\sct})} =1$ for $n= 2r+1$.
\end{proof}

%%%%%%
\subsection{For $n$ even} Write $n=2m$. In this case,
$$Y=\set{(y_1, y_2, ..., y_r) \in \bigoplus_i \Z e_i: 2| (\sum_{i=1}^r y_i)}.$$
Moreover,
\begin{equation*}
Y_{Q,n}=
\left\{
\begin{array}{cc}
(y_1, y_2, ..., y_r) \in \bigoplus_i \Z e_i: \\
\bullet \ 2| (\sum_{i=1}^r y_i), \\
\bullet \ y_i=k_in + m \text{ for all } i\\
\quad  \text{ or } y_i=k_i n \text{ for all } i.
\end{array}\right\} ,
\quad
Y_{Q,n}^{\sct}=
\left\{
\begin{array}{cc}
(y_1, y_2, ..., y_r) \in \bigoplus_i \Z e_i: \\
%\bullet \ 2| (\sum_{i=1}^r y_i), \\
\bullet \ n| y_i \text{ for all } i.
\end{array}\right\}
\end{equation*}

We see easily that for $y_i=k_i n + m$, one has $(y_1, y_2, ..., y_r) \in Y_{Q,n}$ if and only if $2| (rm)$. In fact, for $n$ even, the dual group for $\wt{\Spin}_{2r+1}^{(n)}$ is equal to $\text{SO}_{2r+1}$ if $m$ and $r$ are both odd; otherwise, the dual group is $\Spin_{2r+1}$, see \cite{We2}. We discuss case by case according to the parities of $r$ and $m$.

%First, we deal with the case $r=2$. We show that it agrees with the $\TB_2$ case discussed in $\S$.
%\subsubsection{For $r=2$} In this case,
%%%%%%%%%%
%Now, we assume $r\le 3$ and discuss the fours cases below.

\subsubsection{For $m$ odd and $r$ odd} In particular, one has $r\ge 3$. In this case, $Y_{Q,n}=Y_{Q,n}^{\sct}$, and $\wp_{Q,n}(\OF_{Q,n})=\wp_{Q,n}(\OF_{Q,n,\sct})$. Consider the following situations:

\begin{enumerate}
\item[$\bullet$] If $n> 2(r+1)$ (i.e. $m> r+1$ and therefore $m\ge r+2$), consider $y$ such that $\ii_\TB(y_\rho)=(x_1^*, x_2^*, ..., x_r^*)$ is equal to
$$(1, 2, ..., r-2, r-1, r) \text{ or } (1, 2, ..., r-2, r, r+1).$$
We can check the two orbits $\mca{O}_y$ for these two choices of $y$ are $Y_{Q,n}$-free, and moreover their images with respect to the map $\wp_{Q,n}$ are distinct in $\wp_{Q,n}(\OF_{Q,n})$. Thus, $\val{\wp_{Q,n}(\OF_{Q,n})} \ge 2$ in this case.
\item[$\bullet$] If $n< 2r$ (i.e. $m< r$ and therefore $m\le r-2$), in this case, one can check that $\wp_{Q,n}(\OF_{Q,n,\sct})=\emptyset$.
\item[$\bullet$] If $n=2r$ (note $n\ne 2(r+1)$), i.e. $m=r$. In this case, one can also check that $\wp_{Q,n}(\OF_{Q,n,\sct})=\emptyset$.
\end{enumerate}

Therefore, $\text{dim} \Wh(\Theta(\wt{\Spin}_{2r+1}^{(n)}, \wchi))\ne 1$ for both $r$ and $m$ odd.
%%%%%%%%%%

\subsubsection{For $m$ odd and $r\ge 2$ even} \label{S:B-o-e}

In this case, $Y_{Q,n}\ne Y_{Q,n}^{\sct}$. One has the following situations:
\begin{enumerate}
\item[$\bullet$] Assume $n> 2(r+1)$ (i.e. $m> r+1$ and thus $m\ge r+3$).

\cu{Case I} If $r\ge 3$, consider $y$ and $y'$ such that
$$\ii_\TB(y_\rho)=(1, 2, ..., r-2, r-1, r) \text{ and } \ii_\TB(y'_\rho)=(1, 2, ..., r-2, r, r+1).$$
We can check the orbits $\mca{O}_y, \mca{O}_{y'}$ are $Y_{Q,n}$-free and $\wp_{Q,n}(\mca{O}_y)\ne \wp_{Q,n}(\mca{O}_{y'})$. Thus, $\val{\wp_{Q,n}(\OF_{Q,n})} \ge 2$.

\cu{Case II} If $r= 2$ and $m\ge r+5$, consider $\mca{O}_y$ and $\mca{O}_{y'}$ with $\ii_\TB(y_\rho)=(1, 2)$ and $\ii_\TB(y'_\rho)=(2, 3)$. Then as in the preceding case, they are $Y_{Q,n}$-free and $\wp_{Q,n}(\mca{O}_y)\ne \wp_{Q,n}(\mca{O}_{y'})$. Thus, $\val{\wp_{Q,n}(\OF_{Q,n})} \ge 2$.

\cu{Case III} If $r=2$ and $m=5$, consider $\mca{O}_y$  with $\ii_\TB(y_\rho)=(1, 2)$. It is easy to check $\wp_{Q,n}(\OF_{Q,n})=\set{\wp_{Q,n}(\mca{O}_y)}$. On the other hand, let $z, z'$ be such that $\ii_\TB(z_\rho)=(1, 4)$ and $\ii_\TB(z'_\rho)=(2, 3)$. Then $\wp_{Q,n}(\OF_{Q,n,\sct})=\set{\wp_{Q,n}(\mca{O}_y), \wp_{Q,n}(\mca{O}_z), \wp_{Q,n}(\mca{O}_{z'})}$, a set of size 3. Note, $\mca{O}_z, \mca{O}_{z'} \in \OF_{Q,n,\sct} \backslash \OF_{Q,n}$. That is, $\val{\wp_{Q,n}(\OF_{Q,n})}=1$ and $\val{\wp_{Q,n}(\OF_{Q,n, \sct})} =3$ in this case.

Let $\w, \w'\in W$ be such that $\ii_\TB(\w[z]-z)=\ii_\TB(\w'[z']-z')=-(5, 5)\in Y_{Q,n}$. Write $y_{Q,n}=\ii_\TB(\w[z]-z) \in Y_{Q,n}$. Then, as in \S \ref{S:C-ineq}, $\dim \Wh(\Theta(\wt{\Spin}_{5}^{(10)}, \wchi))=1$ if and only if
\begin{equation} \label{B:2-5}
\wchi(\s_{y_{Q,n}})\ne \vep^{D(y_{Q,n}, z)}\cdot \mathbf{T}(\w, z) \text{ and } \wchi(\s_{y_{Q,n}})\ne \vep^{D(y_{Q,n}, z')}\cdot \mathbf{T}(\w', z').
\end{equation}
However, as in Proposition \ref{P:C:4r+2}, it can be checked easily that $\vep^{D(y_{Q,n}, z)}\cdot \mathbf{T}(\w, z)=\vep^{D(y_{Q,n}, z')}\cdot \mathbf{T}(\w', z')$, and the condition (\ref{B:2-5}) is equivalent to
\begin{equation} \label{B:ineq}
\wchi(\s_{-5\alpha_r^\vee})= -q^{1/2} \cdot \ggma_\psi(\varpi).
\end{equation}
This agrees with the result from Proposition (\ref{P:C:4r+2}) for the $\wt{\TC}_2^{(10)}$ case.
\item[$\bullet$] If $n< 2r$ (i.e. $m\le r$ and therefore $m\le r-1$), in this case, one can check $\wp_{Q,n}(\OF_{Q,n,\sct})=\emptyset$.
\item[$\bullet$] If $n=2(r+1)$ (note $n\ne 2r$), i.e. $r=m-1$. In this case, one can check $\wp_{Q,n}^{\sct}(\OF_{Q,n,\sct})=\set{\wp_{Q,n}^{\sct}(\mca{O}_0)}$ (and thus $\wp_{Q,n}(\OF_{Q,n,\sct})=\set{\wp_{Q,n}(\mca{O}_0)}$) is a singleton with
$$\ii_\TB(0_\rho)=(-r, -(r-1), ..., -2, -1).$$
That is, $\mca{O}_0$ is $Y_{Q,n}^{\sct}$-free. However, it is not $Y_{Q,n}$-free, since there exists $\w\in W$ such that $\ii_\TB(\w(0_\rho))=(1, 2, ..., r-1, r)$. It follows that
$$\ii_\TB(\w(0_\rho)-0_\rho)=(m, m, ..., m, m) \in Y_{Q,n}.$$
\end{enumerate}

Write $y_{Q,n}=\w(0_\rho)-0_\rho=\w[0]-0$. It follows from an analogous argument for Proposition \ref{P:cond-A} that $\text{dim} \Wh(\Theta(\wt{\Spin}_{2r+1}^{(2r+2)}, \wchi))= 1$ if and only if $\wchi$ is the unique exceptional character satisfying
\begin{equation} \label{E:B1}
\wchi(\s_{y_{Q,n}}) = \mathbf{T}(\w, 0).
\end{equation}
One can explicate the equality by computing the right hand side as in Lemma \ref{L:explicitA}. We omit the details here.
%%%%%
\subsubsection{For $m$ even and $r\ge 3$ odd} In this case, $Y_{Q,n}\ne Y_{Q,n}^{\sct}$. We have:
\begin{enumerate}
\item[$\bullet$] If $n> 2(r+1)$ (i.e. $m> r+1$ and therefore $m\ge r+3$), consider $y$ and $y'$ such that
$$\ii_\TB(y_\rho)=(1, 2, ..., r-2, r-1, r) \text{ and } \ii_\TB(y'_\rho)=(1, 2, ..., r-2, r, r+1).$$
We can check the orbits $\mca{O}_y, \mca{O}_{y'}$ are $Y_{Q,n}$-free and $\wp_{Q,n}(\mca{O}_y)\ne \wp_{Q,n}(\mca{O}_{y'})$. Thus, $\val{\wp_{Q,n}(\OF_{Q,n})} \ge 2$.
\item[$\bullet$] If $n< 2r$ (i.e. $m<r$ and therefore $m\le r-1$), in this case, one can check $\wp_{Q,n}(\OF_{Q,n,\sct})=\emptyset$.
\item[$\bullet$] If $n=2(r+1)$ (note $n\ne 2r$), i.e. $r=m-1$. In this case, $\wp_{Q,n}(\OF_{Q,n,\sct})=\set{\wp_{Q,n}(\mca{O}_y)}$ is a singleton with
$$\ii_\TB(0_\rho)=(-r, -(r-1), ..., -2, -1).$$
The situation is exactly as the third case of \S \ref{S:B-o-e}. That is, $\mca{O}_0$ is $Y_{Q,n}^{\sct}$-free but not $Y_{Q,n}$-free. Consider $\w\in W$ such that $\ii_\TB(\w(0_\rho))=(1, 2, ..., r-1, r)$ and
$$\ii_\TB(\w(0_\rho)-0_\rho)=(m, m, ..., m, m) \in Y_{Q,n}.$$
\end{enumerate}

Write $y_{Q,n}=\w(0_\rho)-0_\rho=\w[0]-0$. Then $\text{dim} \Wh(\Theta(\wt{\Spin}_{2r+1}^{(2r+2)}, \wchi))= 1$ if and only if $\wchi$ is the unique exceptional character satisfying
\begin{equation} \label{E:B2}
\wchi(\s_{y_{Q,n}}) = \mathbf{T}(\w, 0).
\end{equation}
%%%%%%%
%%%%%
\subsubsection{For $m$ even and $r\ge 2$ even} In this case, $Y_{Q,n}\ne Y_{Q,n}^{\sct}$. One has the following situations:
\begin{enumerate}
\item[$\bullet$] If $n> 2(r+1)$ (i.e. $m> r+1$ and therefore $m\ge r+2$).

\cu{Case I}, $r\ge 4$. Consider $y$ and $y'$ such that
$$\ii_\TB(y_\rho)=(1, 2, ..., r-2, r-1, r) \text{ and } \ii_\TB(y'_\rho)=(1, 2, ..., r-2, r, r+1).$$
We can check easily that the orbits $\mca{O}_y$ and $\mca{O}_{y'}$ for these two choices are $Y_{Q,n}$-free. Note that $\val{\wp_{Q,n}(\OF_{Q,n})} \ge 2$, since $\wp_{Q,n}(\mca{O}_y)\ne \wp_{Q,n}(\mca{O}_{y'})$.

\cu{Case II}, $r=2$. Consider $y$ and $y'$ such that $\ii_\TB(y_\rho)=(1, 2)$ and $\ii_\TB(y'_\rho)=(2, 3)$. For $m\ge 4$, $\mca{O}_y$ and $\mca{O}_{y'}$ are both $Y_{Q,n}$-free. Moreover, we can check that $\wp_{Q,n}(\OF_{Q,n,sc}) \subseteq \set{\wp_{Q,n}(\mca{O}_y), \wp_{Q,n}(\mca{O}_{y'})}$. Now if $m\ge 6$, then $\wp_{Q,n}(\mca{O}_y) \ne \wp_{Q,n}(\mca{O}_{y'})$. On the other hand, for $m=4$, one has $\wp_{Q,n}(\mca{O}_y) = \wp_{Q,n}(\mca{O}_{y'})$ and therefore $\dim\Wh(\Theta(\wt{\Spin}_{5}^{(8)}, \wchi))=1$ for any exceptional character $\wchi$ in this case.

To summarize for the case $m\ge r+2$:
$$
\begin{cases}
\dim\Wh(\Theta(\wt{\Spin}_{2r+1}^{(n)}, \wchi))=1 & \text{ if } m=4, r=2;\\
\dim\Wh(\Theta(\wt{\Spin}_{2r+1}^{(n)}, \wchi))\ge 2 & \text{ if } r\ge 4 \text{ and } m\ge r+2, \text{ or } r=2 \text{ and } m\ge 6.
\end{cases}
$$
\item[$\bullet$] If $n< 2r$ (i.e. $m<r$ and therefore $m\le r-2$), in this case, one can check easily $\wp_{Q,n}(\OF_{Q,n,\sct})=\emptyset$.
\item[$\bullet$] If $n=2r$ (note $n\ne 2(r+1)$), i.e. $r=m$, one also has $\wp_{Q,n}(\OF_{Q,n,\sct})=\emptyset$.
\end{enumerate}

\vskip 10pt

From the above discussion, we observe that for $r=2$, the result agrees with that for covering groups of type $\TC_2$, as expected. Therefore, we just summarize our result for covering $\wt{\Spin}_{2r+1}^{(n)}$ with $r\ge 3$ as follows.

\begin{thm} \label{T:B}
Consider Brylinski-Deligne covering $\wt{\Spin}_{2r+1}^{(n)}$ with $r\ge 3$. Let $\wchi$ be an exceptional character, then $\dim\Wh(\Theta(\wt{\Spin}_{2r+1}^{(n)}, \wchi))=1$ if and only if one of the following holds:
\begin{enumerate}
\item[$\bullet$] $n=2(r+1)$, and $\wchi$ is the unique exceptional character satisfying (\ref{E:B1}) or (\ref{E:B2});
\item[$\bullet$] $n=2r+1$, and $\wchi$ is the only exceptional character of $\wt{\Spin}_{2r+1}^{(2r+1)}$.
\end{enumerate} 
\end{thm}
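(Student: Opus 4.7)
The plan is to apply Theorem \ref{T:LUB}, which reduces the computation of $\dim \Wh(\Theta(\wt{\Spin}_{2r+1}^{(n)}, \wchi))$ to the combinatorial quantities $|\wp_{Q,n}(\OF_{Q,n})|$ and $|\wp_{Q,n}(\OF_{Q,n,\sct})|$, and then to perform a case analysis on the parity of $n$ using the explicit description of $Y$, $Y_{Q,n}$ and $Y_{Q,n}^\sct$ given in coordinates via $\ii_\TB$. For $n$ odd, the identity $Y_{Q,n}=Y_{Q,n}^\sct$ makes the two bounds coincide, so the conclusion follows directly from Proposition \ref{P:Spin-odd}: only $n=2r+1$ gives dimension one, and the unique exceptional character accounts for this case.

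For $n=2m$ even, the strategy is to further subdivide according to the parities of $m$ and $r$, and in each subcase to exhibit explicit orbit representatives $y$ with prescribed $\ii_\TB(y_\rho)$ in order to estimate $|\wp_{Q,n}(\OF_{Q,n})|$ from below and $|\wp_{Q,n}(\OF_{Q,n,\sct})|$ from above. The natural test elements are those with $\ii_\TB(y_\rho)=(1,2,\dots,r)$ and $\ii_\TB(y_\rho)=(1,2,\dots,r-2,r,r+1)$, whose orbits are easily seen to be $Y_{Q,n}$-free when $m\ge r+2$ and to map to distinct classes modulo $Y_{Q,n}$; this forces $\dim \Wh\ge 2$ in every $m\ge r+2$ subcase (for $r\ge 3$). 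For $m<r$ or $m=r$, an inspection of the possible congruences among the entries $x_i^*$ of $\ii_\TB(y_\rho)$ shows that no $Y_{Q,n}^\sct$-free orbit can exist, so $\OF_{Q,n,\sct}=\emptyset$ and the dimension is $0$.

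The remaining and most delicate subcase is $n=2(r+1)$, i.e. $m=r+1$, which occurs both for $(m \text{ odd}, r \text{ even})$ and for $(m \text{ even}, r \text{ odd})$. Here one checks that the unique class in $\wp_{Q,n}(\OF_{Q,n,\sct})$ is represented by $\mca{O}_0$ with $\ii_\TB(0_\rho)=(-r,-(r-1),\dots,-1)$, while $\mca{O}_0$ fails to be $Y_{Q,n}$-free because applying a suitable $\w\in W$ that produces the vector $(1,2,\dots,r)$ gives $\w[0]-0=(m,m,\dots,m)\in Y_{Q,n}$. Writing $y_{Q,n}:=\w[0]-0$ and proceeding exactly as in the reduction step of Proposition \ref{P:cond-A} for the $\wt{\TA}$ case (extending $\cc$ across the orbit using the factor $\mathbf{T}(\w,0)$ of Definition \ref{D:T} and checking the compatibility $\cc(\s_{\w[0]})=\cc(\s_{y_{Q,n}})\cdot \vep^{D(y_{Q,n},0)}$), one deduces that $\lambda^\wchi_\cc$ extends to a Whittaker functional on $\Theta(\wt{\Spin}_{2r+1}^{(2r+2)}, \wchi)$ if and only if $\wchi(\s_{y_{Q,n}})=\mathbf{T}(\w,0)$, which is precisely the condition (\ref{E:B1}) or (\ref{E:B2}). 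The sufficiency is then established by the same inductive argument on the length of Weyl elements used in \S\ref{S:A-delic}, based on the cocycle properties in Lemma \ref{L:CBF} and the well-definedness of $\mathbf{T}(\w,y)$ in Proposition \ref{P:adj-roots}.

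The main obstacle I expect is handling the subcase $n=2(r+1)$ carefully: the orbit $\mca{O}_0$ lies in $\OF_{Q,n,\sct}\setminus \OF_{Q,n}$, so the two bounds of Theorem \ref{T:LUB} differ, and one must verify that the single condition $\wchi(\s_{y_{Q,n}})=\mathbf{T}(\w,0)$ is both necessary (immediate from Corollary \ref{C:iff-2}) and sufficient (requiring the inductive reduction of \S\ref{S:A-delic} transplanted to the type $\TB$ setting, together with the fact that the right-hand side is independent of the choice of minimum expansion of $\w$). Once this is carried out, assembling the subcases yields the stated dichotomy, and the exclusion of all other values of $n$ follows from the lower bound $\val{\wp_{Q,n}(\OF_{Q,n})}\ge 2$ or the vanishing of $\val{\wp_{Q,n}(\OF_{Q,n,\sct})}$ established above.
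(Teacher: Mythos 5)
Your proposal is correct and follows essentially the same approach as the paper: reduce to Theorem \ref{T:LUB}, handle odd $n$ via Proposition \ref{P:Spin-odd}, and for $n=2m$ split by parities of $m$ and $r$ using the test orbits $(1,\dots,r)$ and $(1,\dots,r-2,r,r+1)$, with the delicate $m=r+1$ case handled by the $\wt{\TA}$-type reduction leading to (\ref{E:B1})/(\ref{E:B2}). One small slip: the compatibility relation $\cc(\s_{\w[0]})=\cc(\s_{y_{Q,n}})\cdot\vep^{D(y_{Q,n},0)}$ is a tautology since $\w[0]=y_{Q,n}$ and $D(y_{Q,n},0)=0$; the substantive equation is $\cc(\s_{\w[0]})=\wchi(\s_{y_{Q,n}})\cdot\cc(\s_0)\cdot\vep^{D(y_{Q,n},0)}$ compared against $\cc(\s_{\w[0]})=\mathbf{T}(\w,0)\cdot\cc(\s_0)$, which is what yields $\wchi(\s_{y_{Q,n}})=\mathbf{T}(\w,0)$ as you correctly state.
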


%%%%%%%%%%%%%%%%%%%%%%%%%%%%%%%%
\vskip 15pt
\section{The $\TG_2$ case}
Consider $\TG_2$ with Dykin diagram for its simple coroots:

$$
\begin{picture}(5.7,0.2)(0,0)
\put(2.5,0){\circle{0.08}}
\put(3,0){\circle{0.08}}
\put(2.53,0.018){\line(1,0){0.44}}
\put(2.54,0){\line(1,0){0.42}}
\put(2.53,-0.018){\line(1,0){0.44}}
\put(2.74,-0.040){$<$}
\put(2.5,0.1){\footnotesize $\alpha_1^\vee$}
\put(3,0.1){\footnotesize $\alpha_2^\vee$}
\end{picture}
$$
\vskip 10pt

Let $Y=\langle \alpha_1^\vee, \alpha^\vee_2 \rangle$ be the cocharacter lattice of $\TG_2$, where $\alpha_1^\vee$ is the short coroot. Let $Q$ be the Weyl-invariant quadratic on $Y$ such such $Q(\alpha_1^\vee)=1$ (thus $Q(\alpha_2^\vee)=3$). Then the bilinear form $B_Q$ is given by
$$
B_Q(\alpha_i^\vee, \alpha_j^\vee) =
\begin{cases}
2 & \text{if } i=j=1; \\
-3& \text{if } i=1, j=2; \\
6 & \text{if } i=j=2.
\end{cases}
$$
A simple computation gives:
$$Y_{Q,n}=Y_{Q,n}^{\sct}=\Z (n_{\alpha_1}\alpha_1^\vee) \oplus \Z (n_{\alpha_2} \alpha_2^\vee),$$
where $n_{\alpha_2}=n/\text{gcd}(n, 3)$ and $n_{\alpha_1}=n$.

\vskip 5pt

The map $\ii_\TG: \bigoplus_{i=1}^2 \Z \alpha_i^\vee \to \bigoplus_{i=1}^3 \Z e_i$ is given by
$$\ii_\TG: (x_1, x_2) \mapsto (x_1-2x_2, x_2-x_1, x_2).$$
Any $(y_i)_i\in \bigoplus_{i=1}^3 \Z e_i$ lies in the image of $\ii_\TG$ if and only if $y_1 + y_2 + y_3=0$. 

The Weyl group $W=\langle \w_{\alpha_1}, \w_{\alpha_2} \rangle$ generated by $\w_{\alpha_1}$ and $\w_{\alpha_2}$ is the Dihedral group of order $12$. In particular, $\w_{\alpha_1}(y_1, y_2, y_3)=(y_2, y_1, y_3) \in \bigoplus_{i=1}^3 \Z e_i$, and $\w_{\alpha_2}(y_1, y_2, y_3)=(-y_1, -y_3, -y_2)$.

By using $\ii_\TG$, we could identify
$$
Y_{Q,n}=Y_{Q,n}^{\sct}=
\left\{
\begin{array}{cc}
(y_1, y_2, y_3) \in \bigoplus_{i=1}^{3} \Z e_i: \\
\bullet \ y_1 +y_2 + y_3 =0,\\
\bullet \ \  y_1\equiv y_2 \equiv y_3 \text{ mod } n.
\end{array}\right\}
$$
We have $ \rho=5\alpha_1^\vee + 3\alpha_2^\vee$ with $\ii_\TG(\rho) =(-1, -2, 3) \in \bigoplus_{i=1}^3 \Z e_i$.
It follows that for any $y=(x_1, x_2) \in \bigoplus_{i=1}^2 \Z \alpha_i^\vee$,
\begin{align*}
\ii_\TG(y_\rho) =(x_1-2x_2 -1, x_2-x_1 -2, x_2 + 3) \in \bigoplus_{i=1}^3 \Z e_i.
\end{align*}
We may write $\ii_\TG(y_\rho)= (x_1^*, x_2^*, x_3^*)$. In particular, $(x_1^*, x_2^*, x_3^*) \in \bigoplus_{i=1}^3 \Z e_i$ lies in the image of $\ii_\TG$ if and only if $x_1^* + x_2^* + x_3^*=0$.

Since $Y_{Q,n}=Y_{Q,n}^{\sct}$, it follows that $\dim \Wh(\Theta(\wt{\TG}_2^{(n)}, \wchi))=\val{\wp_{Q,n}(\OF_{Q,n}) }$, where $\wchi$ is the only exceptional character of $\wt{\TG}_2^{(n)}$ as $Z(\wt{\TG}_2^\vee)=1$. 

To determine the $n$ such that $\dim \Wh(\Theta(\wt{\TG}_2^{(n)}, \wchi))=1$, we only give an outline of the argument, the details of which consists of basic combinatorial computations:

\begin{enumerate}
\item[$\bullet$] For $n=7, 8$ or $n\ge 10$, the orbit $\mca{O}_y$ with $\ii_\TG(y_\rho)=(-2, -1, 3)$ is $Y_{Q,n}$-free.
\item[$\bullet$] For $n=8, 10, 11$ or $n\ge 13$, the orbit $\mca{O}_{y'}$ with $\ii_\TG(y'_\rho)=(-3, -1, 4)$ is $Y_{Q,n}$-free. Moreover, for $n=8, 10, 11$ or $n\ge 13$, one has $\wp_{Q,n}(\mca{O}_y) \ne \wp_{Q,n}(\mca{O}_{y'})$ for $\ii_\TG(y_\rho)=(-2, -1, 3)$ and $\ii_\TG(y'_\rho)=(-3, -1, 4)$.
\item[$\bullet$] If $\OF_{Q,n,\sct}\ne \emptyset$, then necessarily $|Y/Y_{Q,n}^{\sct}| \ge |W|$, i.e. $n\cdot n_{\alpha_2} \ge 12$. Thus $n\ge 4$.
\item[$\bullet$] One can also check by hand that $\OF_{Q,n,\sct}=\emptyset$ for  $n=4, 5, 6, 9$.
\item[$\bullet$] For $n=7, 12$, $\wp_{Q,n}(\OF_{Q,n})=\set{\wp_{Q,n}(\mca{O}_y)}$ with $\ii_\TG(y_\rho)=(-2, -1, 3)$. That is, we have $\dim \Wh(\Theta(\wt{\TG}_2^{(n)}, \wchi))=1$ for $n=7$ or $12$.
\end{enumerate}
\vskip 5pt

To summarize, 
\begin{thm} \label{T:G2}
Consider the Brylinski-Deligne covering $\wt{\TG}_2^{(n)}$. Let $\wchi$ be the only exceptional character on $\wt{\TG}_2^{(n)}$, then $\dim \Wh(\Theta(\wt{\TG}_2^{(n)}, \wchi))=1$ if and only if $n=7$ or $12$.
\end{thm}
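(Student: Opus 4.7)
Since $Y_{Q,n}=Y_{Q,n}^{\sct}$ for $\wt{\TG}_2^{(n)}$, Corollary \ref{C:MC} identifies $\dim \Wh(\Theta(\wt{\TG}_2^{(n)}, \wchi))$ with $\val{\wp_{Q,n}(\OF_{Q,n})}$. Moreover, since $Z(\wt{\TG}_2^\vee)=1$, there is a unique unramified exceptional character $\wchi$ of $\wt{\TG}_2^{(n)}$, so the problem is purely combinatorial: count $Y_{Q,n}$-free $W$-orbits in $Y$ modulo translation by $Y_{Q,n}$. The plan is to work throughout in the coordinates $\ii_\TG(y_\rho)=(x_1^*, x_2^*, x_3^*)$ with $x_1^* + x_2^* + x_3^*=0$, in which the Weyl group $W$ of order $12$ acts by permuting $\pm(x_1^*, x_2^*, x_3^*)$, and the congruence $\w[y]\equiv y \pmod{Y_{Q,n}}$ translates into the condition that the triple obtained by applying $\w$ to $(x_1^*, x_2^*, x_3^*)$ is congruent to $(x_1^*, x_2^*, x_3^*)$ coordinate-wise modulo $n$.

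First I would dispose of the small and intermediate cases. A free orbit has size $12$, hence can exist only if $\val{Y/Y_{Q,n}^{\sct}}=n\cdot n_{\alpha_2}\ge 12$, which rules out $n\le 3$. Direct checking of each residue triple $(x_1^*, x_2^*, x_3^*) \bmod n$ with $x_1^*+x_2^*+x_3^*\equiv 0$ rules out $n=4, 5, 6, 9$ by showing $\OF_{Q,n,\sct}=\emptyset$: in each case either two of the $x_i^*$ are congruent mod $n$, or the triple has a coordinate with $x_i^*\equiv -x_j^* \pmod n$, forcing a nontrivial stabilizer.

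For the positive cases $n=7$ and $n=12$, I would exhibit the candidate orbit $\mca{O}_y$ with $\ii_\TG(y_\rho)=(-2,-1,3)$, check directly that the six pairwise differences $x_i^* - x_j^*$ and six sums $\pm x_i^* \pm x_j^*$ modulo $n$ are all nonzero (certifying $Y_{Q,n}$-freeness), then enumerate the finitely many classes of triples $(x_1^*, x_2^*, x_3^*) \bmod n$ summing to $0$ to verify $\wp_{Q,n}(\OF_{Q,n})=\{\wp_{Q,n}(\mca{O}_y)\}$. For the remaining exclusion cases $n=8, 10, 11$ and $n\ge 13$, I would exhibit a second free orbit $\mca{O}_{y'}$ with $\ii_\TG(y'_\rho)=(-3,-1,4)$, again verify freeness by checking the six differences and six sums modulo $n$, and then verify $\wp_{Q,n}(\mca{O}_y)\ne \wp_{Q,n}(\mca{O}_{y'})$ by showing no element of $W$ carries $(-2,-1,3)$ to a coordinate-wise translate of $(-3,-1,4)$ modulo $n$. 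This last step uses that the $12$ triples in the $W$-orbit of $(-2,-1,3)$ and those in the orbit of $(-3,-1,4)$ are disjoint modulo $n$ for the stated range, a finite case check in each residue class.

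The technical core of the argument is really the sharpness of the exclusion: the main obstacle is making sure that for the borderline values $n=7, 12$ one has not overlooked a second free orbit. I would handle this by an explicit enumeration, noting that any orbit representative can be chosen with $0\le x_3^* \le n/2$ and $\val{x_i^*}\le n/2$ after normalization, leaving only a small list of candidate triples to test against the freeness and $\wp_{Q,n}$-injectivity conditions. Once this enumeration is carried out, the theorem follows immediately from the identification $\dim \Wh(\Theta(\wt{\TG}_2^{(n)}, \wchi))=\val{\wp_{Q,n}(\OF_{Q,n})}$.
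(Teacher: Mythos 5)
Your outline mirrors the paper's proof almost exactly: you use the same reduction via $Y_{Q,n}=Y_{Q,n}^{\sct}$ and Corollary~\ref{C:MC}, the same candidate orbits with $\ii_\TG(y_\rho)=(-2,-1,3)$ and $\ii_\TG(y'_\rho)=(-3,-1,4)$, the same order-$12$ bound to rule out small $n$, and the same brute-force exclusions for $n=4,5,6,9$. So this is not a different route, just a somewhat more explicit account of the same enumeration.

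There is, however, one imprecision that needs to be repaired, and it touches the positive case $n=12$. You assert that $\w[y]\equiv y \pmod{Y_{Q,n}}$ is equivalent to coordinate-wise congruence mod $n$. In fact, in the $\ii_\TG$-coordinates,
$$Y_{Q,n}=Y_{Q,n}^{\sct}=\left\{(y_1,y_2,y_3):\ y_1+y_2+y_3=0,\ y_1\equiv y_2\equiv y_3 \bmod n\right\},$$
and the constraint that the common residue class $c$ satisfies $3c\equiv 0\pmod{n}$ forces $c=0$ only when $3\nmid n$. When $3\mid n$ (as for $n=6,9,12$ in your case list), there are also elements with $c=n/3$ or $c=2n/3$ — e.g. $(4,4,-8)\in Y_{Q,12}$ — that are not coordinate-wise $\equiv 0$. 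Testing only ``six differences and six sums mod $n$ are nonzero'' therefore does not by itself certify $Y_{Q,n}$-freeness of the orbit of $(-2,-1,3)$ when $3\mid n$; you also need to rule out differences congruent to $(c,c,c)$ with $c\in\{n/3, 2n/3\}$. For $n=12$ this is easy (the differences of orbit elements have coordinates bounded in absolute value by $6$, and a sum-zero triple with all coordinates $\equiv 4$ or $\equiv 8\bmod 12$ of that size cannot occur), but the argument as written omits it. The negative cases $n=4,5,6,9$ are unaffected, since there you need only exhibit \emph{some} nontrivial congruence, and the coordinate-wise subset of $Y_{Q,n}$ suffices; but the positive cases $n=7, 12$ and the freeness claims for $n\ge 13$ with $3\mid n$ do require the stronger check. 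Once this is patched, the enumeration goes through and gives exactly the paper's conclusion.
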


%%%%%%%%%%%%%%%
%%%%%%%%%%%%%%%%%%%%%%%%%%%%%%%%%%%%%%%%%
\vskip 20pt

\end{document}